\documentclass[10pt,twoside]{amsart}

\usepackage{csquotes}

\usepackage[margin=1in]{geometry}

\usepackage{amsthm,amsfonts,amssymb,amsmath,latexsym,verbatim,amscd,mathrsfs,dsfont}
\newtheorem{thm}{\bf Theorem}[section]
\newtheorem{prop}[thm]{\bf Proposition}
\newtheorem{lemma}[thm]{\bf Lemma}
\newtheorem*{Def}{\bf Definition}
\newtheorem{rmk}[thm]{\bf Remark}

\newtheorem{cor}[thm]{\bf Corollary}

\def\R{\mathbb R}
\def\N{\mathbb N}
\def\d{\partial}
\def\ep{\varepsilon}
\renewcommand\div{{\rm div}\,}
\def\wt{\widetilde}
\def\cC{\mathcal C}
\def\sC{\mathscr C}
\def\sE{\mathscr E}

\def\cD{\mathcal D}
\def\sH{\mathscr H}
\def\sK{\mathscr K}

\def\cP{\mathcal P}
\def\cT{\mathcal T}
\def\cX{\mathcal X} 

\def\tDelta{\widetilde{\Delta}}
\numberwithin{equation}{section}
\usepackage{cleveref}

\begin{document}
\title[Persistence of geometrical structures for the Boussinesq equations]{Global persistence of geometrical structures for the Boussinesq equation with no diffusion}
\author{Rapha\"el DANCHIN and Xin ZHANG}
\keywords{Boussinesq equations; Incompressible flows;
 Striated regularity, Para-vector field; H\"older spaces; Temperature patch problem.}
\email{raphael.danchin@u-pec.fr and xinzhang@univ-paris-est.fr}
\address{Universit\'{e} Paris-Est,  LAMA (UMR 8050), UPEMLV, UPEC, CNRS,  61 avenue du G\'en\'eral de Gaulle, 94010 Cr\'eteil Cedex 10. }

\begin{abstract}  Here we investigate the so-called \emph{temperature patch problem}  for the incompressible Boussinesq system with partial viscosity, in the whole space $\R^N (N \geq 2 ),$ where the initial temperature is the characteristic function  of some simply connected domain with  $C^{1,\ep}$ H\"older regularity.  Although recent results  in \cite{AH2007, DanP2008a} ensure that an initially $C^1$ patch persists through the evolution,  whether higher regularity is preserved   has remained an open question. In the present paper, we give  a positive answer to that issue globally in time, in the 2-D case for large initial data and  in the higher dimension case for small initial data.   
\end{abstract}
\maketitle

\section{Introduction}
This paper is devoted to  the temperature patch problem for the following  incompressible Boussinesq system with partial viscosity: \begin{equation}\tag{$B_{\nu,N}$}\label{eq:BsnqN}
\left\{
\begin{array}{l}
\d_t \theta +u\cdot \nabla\theta = 0, \\
\d_t u +u\cdot \nabla u-\nu \Delta u + \nabla \Pi = \theta e_N,\\
\div u = 0,\\
(\theta, u)|_{t=0} = (\theta_0 , u_0).
\end{array}
\right.
\end{equation}
Above,  $e_N=(0,\cdots,0,1)$ stands for the unit vertical vector in $\R^N$ with $N\geq 2.$ The unknowns are the scalar function $\theta$ (the temperature), the velocity field $u=(u^1, u^2,...,u^N)$ and the pressure $\Pi$, depending on the time variable $t\geq0$ and on the space variable $x\in\R^N.$ We assume the viscosity $\nu$ to be  a positive constant.
 
  The above Boussinesq system is a toy model for describing the convection phenomenon in viscous incompressible flows, and arises in simplified models for geophysics (see e.g. \cite{Ped1987}).  
A number of  works are dedicated to the  global well-posedness issue of $(B_{\nu,2})$ (see e.g. \cite{AH2007,Chae2006,DanP2008a,HK2007,HouL2005}). In particular, R. Danchin and M. Paicu  proved in \cite{DanP2008a}  (see also \cite{He2012}) that $(B_{\nu,2})$ has a unique global  solution $(\theta,u)$ such that
$\theta \in C\big(\R_{+}; L^2(\R^2)\big)$ and
\footnote{The notation $\widetilde{L}^1_{loc}(\R_{+}; H^2)$ designates a (close) superspace of  ${L}^1_{loc}(\R_{+}; H^2)$, see  \eqref{spc:tilde_H_C}.} 
\begin{equation}\label{spc:Leray_Bsnq2}
 u\in \Bigl(C\big(\R_{+}; L^2(\R^2)\big) \cap L^{2}_{loc}\big(\R_{+};H^1(\R^2)\big) \cap \widetilde{L}^1_{loc}\big(\R_{+}; H^2(\R^2)\big)\Bigr)^2,
\end{equation}
whenever the initial data $(\theta_0, u_0)$ are  in $\big(L^{2}(\R^2)\big)^3$ and satisfy  $\div u_0 =0.$   Additionally, the following energy equality is fulfilled for all $t\geq0$:
\begin{equation*}
\|u(t)\|^{2}_{L^{2}(\R^2)} + 2\nu \int^{t}_{0} \|\nabla u\|^2_{L^2(\R^2)} \,dt' = \|u_0\|^2_{L^2(\R^2)}
 + 2\int^{t}_{0} \int_{\R^2} \theta u_2 \,dx \,dt'.
\end{equation*}
Global well-posedness results are also available in dimension $N\geq3,$  but exactly as for the standard Navier-Stokes equations, the initial data have to satisfy a suitable smallness condition,  see for instance \cite{ DanP2008b, DanP2008a}.
\medbreak

To better explain the main motivation of our work, which is the \emph{temperature patch problem}, let us  assume that  $N=2$ for a while. Based on the aforementioned well-posedness result,  one may consider for $\theta_0$ the characteristic function of some simply connected bounded domain $\mathcal{D}_0$ of the plane. Given that $\theta$ is just advected by the velocity field $u,$  we expect to have $\theta(t,x) = \mathds{1}_{\mathcal{D}_t}(x)$ for all $t\geq0$, where $\mathcal{D}_t := \psi_{u} (t,\mathcal{D}_0)$ and $\psi_{u}$  stands for the flow associated to  $u$, that is 
to say the solution to the following (integrated) ordinary differential equation: 
\begin{equation}\label{eq:flow}
\psi_{u}(t,x): = x+ \int_{0}^{t} u(t', \psi_{u}(t',x))\,dt'.
\end{equation}
If  the regularity of $u$ is given by \eqref{spc:Leray_Bsnq2}, then it has been proved by J.-Y. Chemin and N. Lerner in \cite{CheL1995} that \eqref{eq:flow} has a unique solution, which is  in $\big(C(\R_+;C^{0,1-\eta})\big)^2$ for any $\eta>0.$  
Now, if  we add a bit more regularity on the initial data, for instance\footnote{See the definition of Besov spaces in the next section.} $u_{0} \in (B^{0}_{2,1})^2$ and $\theta_{0} \in B^{0}_{2,1}$  then, according to \cite{AH2007}, 
all the entries of $\nabla u$ are in $L^1_{loc}(\R_+;C_b),$  and  the flow $\psi_u(t,\cdot)$ is thus $C^1.$ Consequently, the   $C^1$ regularity of the temperature patch is preserved for all time.  

Then a natural question arises: what  if we start with a  $C^{1,\ep}$ H\"older domain $\mathcal{D}_0$ with $\ep\in]0,1[$~? Our concern  has some similarity with the celebrated  vortex patch problem for the  2-D incompressible Euler equations.
In that case, it has been proved (see e.g. \cite{Che93,Che1998} and the references therein) that  the  $C^{1,\ep}$ regularity of the patch of the vorticity persists for all time. Proving that in our framework, too, the H\"older regularity of  $\mathcal{D}_t$ is conserved is the main purpose of the present paper. Just like for the vortex patch problem for Euler equations,  our result  will come up as a consequence of a much more general property  of global-in-time   persistence of  \emph{striated regularity}, a definition that originates from the work of J.-Y. Chemin in  \cite{Che1988}.
\medbreak

Before stating our main results, we need to  introduce some notation, and to clarify what striated regularity is. 
Assume that\footnote{We adopt Einstein summation convention  in the whole text: summation is taken  with respect to the repeated indices, whenever they occur both as a subscript and a superscript.}
 $X = X^{k}(x) \d_k$ is some vector field acting on  functions in $C^1(\R^N;\R).$  As usual, vector fields are identified  with vector valued functions from $\R^N$ to $\R^N,$ 
and $\d_X f$ stands for the \emph{directional derivative} of $f \in C^1(\R^N;\R)$ along the vector field $X$, namely  
$$\d_X f :=  X^k \d_k f=X\cdot \nabla f.$$

The evolution $X_t(x):= X(t,x)$  of any continuous  initial vector field $X_0$
along the flow of $u$ is defined by:
$$ X(t,x) := (\d_{X_0} \psi_u)\big(\psi^{-1}_{u}(t,x)\big).$$
In the $C^1$ case,  combining the chain rule and the definition of the flow in \eqref{eq:flow} implies  that  $X$ satisfies the transport  equation
\footnote{Omitting  the index $t$ in $X_t$ for notational simplicity.}
\begin{equation}\label{eq:X}
\left\{
\begin{array}{l}
\d_t X +u\cdot \nabla X = \d_{X} u, \\
 X|_{t=0}=X_0. \\
\end{array}
\right.
\end{equation}
Applying operator  $\div$ to \eqref{eq:X} and remembering that $\div u =0$, we 
obtain in addition
\begin{equation}\label{eq:divX}
\left\{
\begin{array}{l}
 \d_t \div X + u\cdot \nabla \div X = 0,\\
 \div X |_{t=0}=\div X_0. \\
\end{array}
\right.
\end{equation}
This implies that the divergence-free  property is  conserved through the evolution. 
\medbreak
As we will see in Section \ref{sect:5}, the temperature patch problem is closely related to the conservation of  H\"older  regularity $C^{0,\ep}$ for $X.$ According to the classical theory of transport equations, if $u$ is Lipschitz with respect to the space variable (a condition that will be ensured if the data of $(B_{\nu,N})$ have  critical Besov regularity), then conservation of $C^{0,\ep}$ regularity for $X$ is equivalent to the fact that all the components of  $\d_Xu$ have the regularity $C^{0,\ep}$ 
with respect to the space variable.
\medbreak
In  the 2-D case, it is natural to recast the regularity of $u$ along the vector field $X$ in terms of the vorticity
$\omega := \d_1 u^2-\d_2 u^1$ as the simple transport-diffusion equation is fulfilled:
\begin{equation}\label{eq:voritcity_Bsnq2}
\left\{
\begin{array}{l}
\d_t \omega +u\cdot \nabla \omega -\nu \Delta \omega= \d_1 \theta, \\
 \omega|_{t=0}=\omega_0,
\end{array}
\right.
\end{equation}
and as it is known (see e.g. \cite{BCD2011}, Chap. 7) that\footnote{In all the paper, we agree that $A \lesssim B$ means  $A \leq C B$ for some harmless constant $C.$}:
\begin{equation}\label{es:dXu_divXomega1}
\|\d_Xu\|_{\sC^\ep}\lesssim \|\nabla u\|_{L^\infty}\|X\|_{\sC^\ep}+\|\div(X\omega)\|_{\sC^{\ep-1}},
\end{equation}
where  for any real number $s,$ we 
denote\footnote{Recall that  $\sC ^{k+\ep}$ coincides with the standard H\"older space $C^{k,\ep}$ whenever $k\in\N$ and $\ep\in]0,1[.$}
$\sC^{s} \equiv \sC ^{s}(\R^N) := B^{s}_{\infty,\infty}(\R^N).$
\medbreak
Now,  applying operators $\d_X$ and $\div(X\cdot)$  to the temperature and vorticity equations,  respectively, we get the following system for $\big(\d_X\theta,\div(X\omega)big)$:
\begin{equation}\label{eq:dBsnq2}
\left\{
\begin{aligned}
&\d_t \d_X \theta + u\cdot \nabla \d_X \theta = 0,\\
&\d_t\div(X\omega)+u\cdot \nabla \div (X\omega) -\nu \Delta \div(X\omega) =f,
\end{aligned}
\right.
\end{equation}
with $\displaystyle f:=\nu \div \big(X\Delta \omega - \Delta(X\omega)\big)+ \div (X \d_1 \theta).$
\medbreak

Let us recap. Roughly speaking, to propagate the $\sC^\ep$ regularity of $X,$ it is sufficient to control 
$\d_Xu$ in  $\big({L}^1_{loc}(\R_+;\sC^{\ep})\big)^2,$ and this may be achieved, thanks to \eqref{es:dXu_divXomega1}, 
if bounding the distribution $\div(X\omega)$ in ${L}^1_{loc}(\R_+;\sC^{\ep-1}).$ Then by taking 
advantage of smoothing properties of the heat flow,  this latter information may be obtained through a bound of   $f$ in  the very negative space ${L}^1_{loc}(\R_+;\sC^{\ep-3}),$ if assuming that $\div(X_0\omega_0)\in\sC^{\ep-3}.$
Staring at  the expression of $f,$ we thus need to  bound  $\d_X\theta$ in $L^\infty_{loc}(\R_+;\sC^{\ep-2}).$
As no gain of regularity may be expected from the transport equation satisfied by $\d_X\theta,$  we have to assume initially that $\d_{X_0}\theta_0\in\sC^{\ep-2}.$ 
 This motivates the following statement which is our main result of propagation of striated regularity 
 in the 2-D case
 \footnote{To fully benefit from the smoothing properties of the heat flow in the endpoint case, one has to work in (close) superspaces  of ${L}^1_{loc}(\R_+;\sC^{s})$ denoted by $\widetilde {L}^1_{loc}(\R_+;\sC^{s})$ and defined in \eqref{spc:tilde_H_C}.}.
\begin{thm}\label{thm:dXBsnq2}
Suppose that $(\ep, q)\in ]0,1[\times ]1,\frac{2}{2-\ep}[$. Let $\theta_0$ be in  $B^{\frac{2}{q}-1}_{q,1}(\R^2)$ and $u_0$ be a divergence-free vector field in $\big( L^2(\R^2)\big)^2,$ with  vorticity $\omega_0 := \d_1 u^2_0-\d_2 u_0^1$ in $B^{\frac{2}{q}-2}_{q,1}(\R^2)$. Then there exists a unique global solution $(\theta, u)$ of System $(B_{\nu,2})$, 
such that
\begin{equation}\label{spc:thm_dXBsnq2}
(\theta, u, \omega) \in C (\R_{+}; B^{\frac{2}{q}-1}_{q,1}) \times\big( C(\R_+;L^2)\big)^2 
\times \big( C(\R_{+};B^{\frac{2}{q}-2}_{q,1}) \cap  L^1_{loc}(\R_{+};B^{\frac{2}{q}}_{q,1})\big).
\end{equation}
Furthermore, if we consider some   $X_0$ in  $\big(\sC^{\ep}(\R^2)\big)^2$ satisfying
$\d_{X_0}\theta_0 \in \sC^{\ep-2}(\R^2)$ and $\div(X_0 \omega_0) \in \sC^{\ep-3}(\R^2),$
then there exists a unique global solution
\footnote{If $E$ is a Banach space with predual $E^*$ then  $C_{w}(\R_+;E)$ stands for the set of measurable functions $h:\R_+\to E$ such that for all $\phi\in E^*,$ the function $t\mapsto\langle h(t),\phi\rangle_{E\times E^*}$
is continuous on $\R_+.$}
 $X\in C_w(\R_+;\sC^\ep)$ to \eqref{eq:X} and we have 
\begin{equation*}
\big(\d_X \theta, \div (X \omega)\big) \in  {C}_w (\R_{+}; \sC^{\ep-2}) \times \big({C}_{w}(\R_{+}; \sC^{\ep-3}) \cap \widetilde{L}^{1}_{loc}(\R_{+}; \sC^{\ep-1})\big).
\end{equation*}
Additionally, there is a constant $C_{0, \nu} $ depending only on  the initial data and viscosity constant such that for any $t\geq0$,
\begin{equation*}
\|X\|_{L^{\infty}_{t}(\sC^{\ep})} \leq C_{0,\nu} \exp \Big(\exp \big(\exp ( C_{0, \nu}t^4)\big) \Big).
\end{equation*}
\end{thm}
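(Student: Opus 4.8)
The plan is to set up a bootstrap argument based on the chain of a priori estimates sketched in the introduction, feeding back into a Gr\"onwall-type inequality that yields the triple-exponential bound. First I would establish the existence, uniqueness and regularity of $(\theta,u)$ in the function spaces of \eqref{spc:thm_dXBsnq2}: this is by now fairly standard for $(B_{\nu,2})$ with critical Besov data, combining the $L^2$ energy estimate for $u$, a transport estimate for $\theta$ in $B^{2/q-1}_{q,1}$, and a transport-diffusion estimate for $\omega$ in $B^{2/q-2}_{q,1}$ via \eqref{eq:voritcity_Bsnq2}; the constraint $q<\frac{2}{2-\ep}$ ensures the Lipschitz bound $\|\nabla u\|_{L^1_{loc}(\R_+;L^\infty)}$ needed both for the flow to be $C^1$ and for all the striated estimates below. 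With the velocity fixed, the evolved vector field $X$ is defined through the flow as in the statement, solves \eqref{eq:X}, and inherits $\div X\in L^\infty(\div X_0)$ from \eqref{eq:divX}; weak-$*$ continuity in $\sC^\ep$ comes from the transport structure together with the $L^1_t L^\infty$ control of $\nabla u$.

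The heart of the argument is the coupled system \eqref{eq:dBsnq2} for $(\d_X\theta,\div(X\omega))$. The plan is to run the estimates in increasing order of regularity exactly as outlined: (i) since $\d_X\theta$ solves a pure transport equation, a logarithmic transport estimate in $\sC^{\ep-2}$ gives $\|\d_X\theta\|_{L^\infty_t(\sC^{\ep-2})}$ controlled by $\|\d_{X_0}\theta_0\|_{\sC^{\ep-2}}$ times $\exp(C\|\nabla u\|_{L^1_t L^\infty})$ — here one must be a little careful because $\sC^{\ep-2}$ is a negative-regularity space, so the relevant estimate is the one for transport of distributions, and the commutator $[\d_j,u\cdot\nabla]$-type terms are absorbed using the Besov regularity of $\nabla u$; (ii) plug this into the forcing term $f=\nu\div(X\Delta\omega-\Delta(X\omega))+\div(X\d_1\theta)$ and estimate $f$ in $\widetilde L^1_t(\sC^{\ep-3})$: the parabolic commutator $X\Delta\omega-\Delta(X\omega)$ is, up to lower-order terms, $-2\nabla X:\nabla^2(\cdot)$ acting on $\omega$ plus $\Delta X\,\omega$, so it is quadratic in $(X,\omega)$ and, after taking $\div$, costs three derivatives — matched against $\|X\|_{\sC^\ep}$, $\|\omega\|_{B^{2/q-2}_{q,1}}$ and $\|\omega\|_{B^{2/q}_{q,1}}$ via paraproduct and remainder decompositions (Bony calculus), while $\div(X\d_1\theta)$ is handled by $\|\d_X\theta\|_{\sC^{\ep-2}}$ together with $\|\nabla X\,\d_1\theta\|_{\sC^{\ep-2}}$, itself bounded using $\div X$ and the paraproduct laws; (iii) apply the smoothing estimate for the transport-diffusion equation in the $\widetilde L^1$ scale to upgrade $f\in\widetilde L^1_t(\sC^{\ep-3})$ and $\div(X_0\omega_0)\in\sC^{\ep-3}$ into $\div(X\omega)\in C_w(\R_+;\sC^{\ep-3})\cap\widetilde L^1_t(\sC^{\ep-1})$; (iv) feed this last bound into \eqref{es:dXu_divXomega1} to get $\d_Xu\in L^1_t(\sC^\ep)$, and then the standard transport estimate for \eqref{eq:X} closes the loop by bounding $\|X\|_{L^\infty_t(\sC^\ep)}$ in terms of $\|X_0\|_{\sC^\ep}$, $\|\nabla u\|_{L^1_t L^\infty}$ and $\|\d_Xu\|_{L^1_t(\sC^\ep)}$.

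Assembling these, one obtains a closed nonlinear differential inequality for the quantity $Y(t):=\|X\|_{L^\infty_t(\sC^\ep)}+\|\div(X\omega)\|_{\widetilde L^1_t(\sC^{\ep-1})}$. Tracking the dependence on the already-controlled norms of $(\theta,u,\omega)$ produces the tower of exponentials: the Lipschitz norm $\|\nabla u\|_{L^1_t L^\infty}$ is already known to grow at most like $\exp(\exp(C_{0,\nu}t^{?}))$ for the 2-D Boussinesq system (this is where the polynomial $t^4$ under the innermost exponential enters, coming from iterated use of the energy equality, the $\omega$ equation and the critical Besov propagation), each transport/transport-diffusion step exponentiates that quantity once more, and the paraproduct estimates on $f$ are at most linear in $Y$ after the lower-order pieces are absorbed — so Gr\"onwall gives $Y(t)\le C_{0,\nu}\exp(\exp(\exp(C_{0,\nu}t^4)))$.

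The main obstacle, I expect, is step (ii): controlling the parabolic commutator $\div(X\Delta\omega-\Delta(X\omega))$ in the very negative, \emph{tilde} Lebesgue--Besov space $\widetilde L^1_t(\sC^{\ep-3})$ without losing integrability in time. One has only $\omega\in\widetilde L^1_t(B^{2/q}_{q,1})$ (two derivatives above the transport threshold) and $X$ merely in $L^\infty_t(\sC^\ep)$ with $\div X\in L^\infty_t(\sC^{\ep-1})$, so the bilinear estimate is genuinely borderline and must exploit the structure $X\Delta\omega-\Delta(X\omega)=-2\,\d_j X^k\,\d_j\d_k\omega\;(\!\!-\Delta X^k\,\d_k\omega)$ together with the divergence-free cancellation and a careful Bony decomposition that places the two $X$-derivatives on the low-frequency paraproduct factor. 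Getting the $\widetilde L^1$-in-time norm (rather than $L^1$ of the $\sC^{\ep-3}$ norm) is what allows the endpoint heat-smoothing estimate to be applied, and verifying that all the paraproduct remainders land in the right space — in particular that the term $\div(X\d_1\theta)$, which is only as regular as $\d_X\theta\in L^\infty_t(\sC^{\ep-2})$ permits, does not spoil the $\widetilde L^1_t(\sC^{\ep-3})$ bound — is the delicate bookkeeping on which the whole scheme rests.
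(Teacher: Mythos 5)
Your plan follows essentially the same route as the paper's proof: the same chain of a priori estimates (transport of $\d_X\theta$ in $\sC^{\ep-2}$, Bony-type bound of $f=\nu\div(X\Delta\omega-\Delta(X\omega))+\div(X\d_1\theta)$ at the level $\sC^{\ep-3}$, endpoint heat smoothing for $\div(X\omega)$ in $\widetilde L^1_t(\sC^{\ep-1})$, then \eqref{es:dXu_divXomega1} and Gr\"onwall for $X$), with the single-exponential $t^4$ bound on $\|\nabla u\|_{L^1_t(L^\infty)}$ from \cite{AH2007} exponentiated twice more to give the triple exponential, modulo the regularize-and-pass-to-the-limit step of Subsection \ref{sect:complt_thm_dXBsnq2} which you do not detail. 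The obstacle you single out is in fact not one: since $L^1_t(\sC^{\ep-3})\hookrightarrow\widetilde L^1_t(\sC^{\ep-3})$, it suffices to bound $F=X\Delta\omega-\Delta(X\omega)$ pointwise in time in $\sC^{\ep-2}$ by $\|X\|_{\sC^\ep}\|\omega\|_{B^{2/q}_{q,1}}$ (via the decomposition $F=[T_X,\Delta]\omega+T_{\Delta\omega}X+R(X,\Delta\omega)-\Delta T_\omega X-\Delta R(\omega,X)$, legitimate because $\ep+\frac2q>2$, and $\omega\in L^1_{loc}(\R_+;B^{2/q}_{q,1})$), exactly as the paper does.
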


A few comments are in order:
\begin{itemize}
\item The functional space $ B^{\frac{2}{q}-1}_{q,1}(\R^2)$ for $\theta_0$ is large enough to contain the characteristic function of any bounded $C^1$ domain. This will be  needed  to investigate  the temperature patch problem later (see 
Corollary \ref{cor:dXBsnq2} and Section \ref{sect:5} below).
\item It may be seen by means of elementary paradifferential  calculus  that
if $(\ep, q)\in ]0,1[\times ]1,\frac{2}{2-\ep}[$ then $\div(X_0\omega_0)$ and  $\d_{X_0}\theta_0$
are distributions of $\sC^{-3}$ and $\sC^{-2},$ respectively, and that
the following (sharp) estimates are fulfilled:
$$
\|\div (X_0 \omega_0)\|_{\sC^{-3}}  \lesssim \|X_0\|_{\sC^{\ep}}\|\omega_0\|_{B^{\frac{2}{q}-2}_{q,1}}\quad\hbox{and}\quad
\|\d_{X_0} \theta_0\|_{\sC^{-2}}  \lesssim \|X_0\|_{\sC^{\ep}}\|\theta_0\|_{B^{\frac{2}{q}-1}_{q,1}}.
$$
 Therefore, our  striated regularity assumption on the initial data is indeed additional information.

\item The required  level of regularity is  much lower than in  the  (inviscid) 2-D vortex patch problem where  $\div(X_0 \omega_0) \in \sC^{\ep-1}(\R^2)$ is needed.  This is because  the  smoothing effect given by the  heat flow enables us to 
gain two derivatives with respect to the initial data. Another difference is that to tackle  the temperature patch problem, it is not necessary to consider a family of vector fields that does not degenerate on the whole $\R^2$ : just one suitably chosen vector-field that does not vanish in the neighborhood of the boundary of the patch is enough, as we shall see just below. 
\end{itemize}
\medbreak
Let us now go to the temperature patch problem in the  $2$-D case.
More precisely, consider a  $C^{1,\ep}$ simply connected bounded domain ${\mathcal D}_0$ of $\R^2$
(in other words  $\d\mathcal{D}_0$ is a $C^{1,\ep}$ Jordan curve on $\R^2$).  Let  $\mathcal{D}^{\star}_0$ be any bounded domain 
of $\R^2$ such that\footnote{We denote by $\overline{A}$ the closure of the subset $A$ in $\R^N,$ $N\geq2$.}  
\begin{equation}\label{as:dist}
\overline{\cD_0}\cap\overline{\cD^{\star}_0}=\emptyset. 
  \end{equation} 
Then the following result holds true. 

\begin{cor}\label{cor:dXBsnq2} Let $(M_1, M_2) $ be in $\R^2.$
Assume that $\theta_0 = M_1 \mathds{1}_{\mathcal{D}_0}$  
 and  that the vorticity  $\omega_0$ of $u_0$ may be decomposed
 into 
 \begin{equation}\label{as:vorticity}
 \omega_0 =M_2 \mathds{1}_{\mathcal{D}_0}-\wt\omega_0
 \end{equation}
 for some $\wt\omega_0\in L^r(\R^2)$ with  $r>1,$ supported in $ \overline{{\mathcal D}_0^\star}$
 and such that 
 \begin{equation}\label{as:mean}
 \int_{\R^2}\wt\omega_0(x)\,dx=M_2\,|{\mathcal{D}_0}|.
 \end{equation}
 Then there exists a unique solution $(\theta,u)$ to System $(B_{\nu,2}),$
 satisfying the properties listed in Theorem \ref{thm:dXBsnq2}. 
 Furthermore, we have  $\theta(t,\cdot) = M_{1}\mathds{1}_{\mathcal{D}_t}$  where 
 $\mathcal{D}(t) := \psi_{u} (t,\mathcal{D}_0)$ and $\d\mathcal{D}(t)$ remains a  $C^{1,\ep}$
 Jordan curve of $\R^2$  for all $t\geq0.$
\end{cor}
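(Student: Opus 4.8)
The plan is to deduce the corollary from Theorem~\ref{thm:dXBsnq2} applied to a \emph{single}, carefully tailored vector field $X_0$ adapted to $\d\cD_0$, and then to convert the persistence of striated regularity for $X$ into the persistence of $C^{1,\ep}$ regularity for $\d\cD_t$. First I would check that the data are admissible: fix $q\in\bigl(1,\min\{\tfrac{2}{2-\ep},r\}\bigr)$, a nonempty interval since $\ep\in]0,1[$ and $r>1$. As $\cD_0$ is a bounded $C^1$ domain, $\mathds 1_{\cD_0}\in B^{1/q}_{q,\infty}(\R^2)\hookrightarrow B^{\frac2q-1}_{q,1}(\R^2)$ (the embedding holds because $\frac2q-1<\frac1q$ for $q>1$), so $\theta_0\in B^{\frac2q-1}_{q,1}$. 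Likewise $M_2\mathds 1_{\cD_0}\in B^{1/q}_{q,\infty}\hookrightarrow B^{\frac2q-2}_{q,1}$, while $\wt\omega_0\in L^r$ of compact support with $r\ge q$ gives $\wt\omega_0\in L^q\hookrightarrow B^0_{q,\infty}\hookrightarrow B^{\frac2q-2}_{q,1}$ (as $\frac2q-2<0$), so $\omega_0\in B^{\frac2q-2}_{q,1}$. Finally, \eqref{as:mean} says precisely that $\int_{\R^2}\omega_0\,dx=0$, so the Biot--Savart law $u_0=\nabla^\perp(-\Delta)^{-1}\omega_0$ produces a velocity decaying like $|x|^{-2}$ at infinity; combined with $\omega_0\in L^q$ of compact support this yields $u_0\in\bigl(L^2(\R^2)\bigr)^2$, and $\div u_0=0$ by construction.

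Next I would construct $X_0$. Let $f$ be a $C^{1,\ep}$ defining function of $\cD_0$ on a neighbourhood $V$ of $\d\cD_0$ (so $\cD_0\cap V=\{f>0\}\cap V$ and $\nabla f\neq0$ on $\d\cD_0$), extended to a compactly supported $C^{1,\ep}$ function on $\R^2$, and pick $\chi\in C^\infty_c(V)$ equal to $1$ near $\d\cD_0$ with $\overline{\operatorname{supp}\chi}\cap\overline{\cD^{\star}_0}=\emptyset$, which is possible by \eqref{as:dist}. Set $X_0:=\nabla^\perp(\chi f)$. Then $X_0\in\bigl(\sC^\ep(\R^2)\bigr)^2$, $\div X_0=0$, $X_0$ equals $\nabla^\perp f$ on $\d\cD_0$ — hence is tangent to $\d\cD_0$ and does not vanish there — and $X_0\equiv0$ on $\operatorname{supp}\wt\omega_0\subset\overline{\cD^{\star}_0}$. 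The two striated assumptions of Theorem~\ref{thm:dXBsnq2} then hold \emph{trivially}: since $\nabla\mathds 1_{\cD_0}$ is the conormal surface distribution carried by $\d\cD_0$ and $X_0$ is tangent to it, $\d_{X_0}\theta_0=M_1\,X_0\cdot\nabla\mathds 1_{\cD_0}=0$, and therefore $\div(X_0\omega_0)=M_2\,\div(X_0\mathds 1_{\cD_0})-\div(X_0\wt\omega_0)=M_2\,X_0\cdot\nabla\mathds 1_{\cD_0}-0=0$; in particular $\d_{X_0}\theta_0\in\sC^{\ep-2}$ and $\div(X_0\omega_0)\in\sC^{\ep-3}$. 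Theorem~\ref{thm:dXBsnq2} then yields the global solution $(\theta,u)$ with all the announced regularity, together with $X\in C_w(\R_+;\sC^\ep)$ solving \eqref{eq:X}.

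It remains to read off the geometry. Recall (cf.\ the proof of Theorem~\ref{thm:dXBsnq2}) that the bound $\omega\in L^1_{loc}(\R_+;B^{\frac2q}_{q,1})$ together with the $L^2$ control of $u$ makes $u$ spatially Lipschitz with $\nabla u\in L^1_{loc}(\R_+;L^\infty)$, so $\psi_u(t,\cdot)$ is a measure-preserving bi-Lipschitz homeomorphism of $\R^2$; by uniqueness for the transport equation $\d_t\theta+u\cdot\nabla\theta=0$ in $C(\R_+;B^{\frac2q-1}_{q,1})$ we get $\theta(t,\cdot)=\theta_0\circ\psi_u^{-1}(t,\cdot)=M_1\mathds 1_{\cD_t}$ with $\cD_t=\psi_u(t,\cD_0)$, whose boundary $\d\cD_t=\psi_u(t,\d\cD_0)$ is a Jordan curve. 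To upgrade this to $C^{1,\ep}$, set $\varphi(t,\cdot):=(\chi f)\circ\psi_u^{-1}(t,\cdot)$; it is Lipschitz and solves $\d_t\varphi+u\cdot\nabla\varphi=0$ with $\varphi|_{t=0}=\chi f$. Differentiating this equation and using $\div u=0$ for an elementary algebraic identity, one checks that $\nabla^\perp\varphi$ solves \eqref{eq:X} with datum $\nabla^\perp(\chi f)=X_0$, just like $X$; since $X$ and $\nabla^\perp\varphi$ both lie in $L^\infty_{loc}(\R_+;L^\infty)$ and the coefficients of \eqref{eq:X} are Lipschitz in $x$, uniqueness forces $X(t,\cdot)=\nabla^\perp\varphi(t,\cdot)$, whence $\nabla\varphi(t,\cdot)\in\sC^\ep$ and $\varphi(t,\cdot)\in C^{1,\ep}$ near $\d\cD_t$. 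Finally, from $X\bigl(t,\psi_u(t,y)\bigr)=\nabla_y\psi_u(t,y)\,X_0(y)$ and $\det\nabla\psi_u(t,\cdot)\equiv1$ we get $|X(t,\cdot)|\ge\|\nabla\psi_u(t,\cdot)\|_{L^\infty}^{-1}\,|X_0\circ\psi_u^{-1}(t,\cdot)|$, which stays bounded away from $0$ near $\d\cD_t$; hence $\nabla\varphi(t,\cdot)\neq0$ there, $\cD_t$ coincides near its boundary with $\{\varphi(t,\cdot)>0\}$ (the sign of $\varphi$ being transported), and the implicit function theorem shows that $\d\cD_t=\{\varphi(t,\cdot)=0\}$ is a $C^{1,\ep}$ Jordan curve for all $t\ge0$.

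The soft part is the choice of $X_0$: once it is tangent to $\d\cD_0$ and supported away from $\overline{\cD^{\star}_0}$, the striated data vanish identically and the hypotheses of Theorem~\ref{thm:dXBsnq2} are satisfied essentially for free. The bulk of the analytic difficulty already lies in Theorem~\ref{thm:dXBsnq2} itself; beyond it, the only genuinely delicate step is the last paragraph — the low-regularity bookkeeping identifying the geometric quantity $\nabla\varphi$ with $X^\perp$ (a weak-solution argument requiring a mollification to differentiate the transport equation for $\varphi$), the propagation of the non-degeneracy of $X$ along a flow that is merely Lipschitz, and the use of the implicit function theorem for a vector field that is only $C^{0,\ep}$, not smooth. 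This is the type of argument developed in Section~\ref{sect:5}.
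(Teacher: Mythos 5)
Your proposal is correct and, in its core, coincides with the paper's proof: the same admissibility checks (choice of $q<\min\{\tfrac2{2-\ep},r\}$, the embedding \eqref{obs1:cor_dXBsnq}, $u_0\in L^2$ from the compactly supported, mean-free $L^r$ vorticity), the same single divergence-free vector field $X_0=\nabla^{\perp}(\chi f_0)$ tangent to $\d\cD_0$ and vanishing near $\overline{\cD_0^\star}$ thanks to \eqref{as:dist}, the same observation that $\d_{X_0}\theta_0$ and $\div(X_0\omega_0)$ vanish identically, and then Theorem \ref{thm:dXBsnq2}. Where you deviate is the final geometric step. The paper stays one-dimensional: it parametrizes $\d\cD_0$ by solving $\d_\sigma\gamma_0=\widetilde X_0(\gamma_0)$ on $\mathbb S^1$, sets $\gamma_t:=\psi_u(t,\gamma_0)$, and reads off $\d_\sigma\gamma_t=\widetilde X_t(\gamma_t)$; since $X_t\in\sC^{\ep}$ and $\nabla\psi_u$ is invertible, this is at once a regular $C^{1,\ep}$ parametrization of the Jordan curve $\d\cD_t$ — no identification of $X$ with a gradient, no uniqueness argument for bounded solutions of \eqref{eq:X}, no implicit function theorem. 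Your route — identifying $X$ with $\nabla^{\perp}\varphi$ for the transported level-set function $\varphi$, propagating non-degeneracy through the flow, and applying the implicit function theorem — is precisely the strategy the paper reserves for the case $N\geq3$ in Section \ref{sect:5}, where $\nabla f$ is reconstructed from the transported vector fields. It costs you the extra (standard, and duly flagged) mollification/uniqueness step to justify $X=\nabla^{\perp}\varphi$ in the class of bounded solutions, and your non-degeneracy bound should involve $\|(\nabla\psi_u)^{-1}\|_{L^\infty}^{-1}$ rather than $\|\nabla\psi_u\|_{L^\infty}^{-1}$ (harmless in 2-D, where $\det\nabla\psi_u=1$ makes the two comparable); in exchange it produces a genuine $C^{1,\ep}$ defining function for $\cD_t$ and is the formulation that carries over to higher dimensions.
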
 
Let us make some comments on that corollary.
\begin{itemize}
\item Hypothesis \eqref{as:mean} ensures that the initial vorticity is mean free, which is necessary to have $u_0\in \big(L^2(\R^2)\big)^2.$  As
a matter of fact, the more natural assumption $\omega_0=M_2 \mathds{1}_{\cD_0}$
would require our extending  Theorem \ref{thm:dXBsnq2} to infinite energy velocity fields, which introduces  additional technicalities. 
\item 
Assumption  \eqref{as:vorticity} on the vorticity may seem somewhat artificial as  it has no persistency whatsoever through the time evolution, 
even in the asymptotics $\nu\to0$ (in contrast with the slightly viscous
vortex patch problem, see \cite{Dan1997}). This is just to have a concrete
example of initial velocity for which one can give a positive answer to the temperature
patch problem. 
\item  Corollary \ref{cor:dXBsnq2} may be generalized to the case where the 
initial velocity $u_0\in \big(L^2(\R^2)\big)^2$ is such that $\omega_0\in B^{\frac2q-2}_{q,1}(\R^2)$ for some 
$1<q<\frac2{2-\ep}$ and satisfies $\div(X_0\omega_0)\in\sC^{\ep-3}(\R^2)$ for some 
vector field $X_0\in\sC^\ep(\R^2)$ that  does not vanish on $\d\mathcal{D}_0$ 
and is tangent to $\d\mathcal{D}_0.$ One just has to follow the proof that is  proposed in 
Section \ref{sect:5} to get this more general result.
\end{itemize}
\bigbreak
In space dimension $N \geq 3$, the  vorticity equation has an additional stretching term, and it is thus less natural to measure the striated regularity by means of $\div(X\Omega)$ with $\Omega$ denoting the matrix of $\hbox{curl}~u$  (even though we suspect that our 2-D approach is adaptable to the  high-dimensional case, like in \cite{GS-R1995}). We shall thus concentrate on 
the regularity of  $\d_X\theta$ and  $\d_Xu.$ An additional  (related) difficulty is that one cannot expect to prove global existence  for general large initial data, since   \eqref{eq:BsnqN} contains the standard incompressible Navier-Stokes equations as a particular case. Therefore we shall prescribe some smallness condition on the data (the same one as in \cite{DanP2008a}) to achieve a global statement. This leads to the following theorem:

\begin{thm}\label{thm:dXBsnqN}
Suppose that $N \geq 3$ and that $(\ep, p) \in]0,1[ \times ]N, \frac{N}{1-\ep}[$. Assume that $\theta_0$ is in 
$B^{0}_{N,1}(\R^N) \cap L^{\frac{N}{3}}(\R^N)$ and that the components of the
divergence-free vector field $u_0$ are in $B^{\frac{N}{p}-1}_{p,1}(\R^N)$
and in the weak Lebesgue space  $L^{N,\infty}(\R^N)$. If there exists a (small) positive constant $c$ independent of $p$ such that
\begin{equation*}
\|u_0\|_{L^{N,\infty} } + \nu^{-1} \|\theta_0\|_{L^{\frac{N}{3}}} \leq c\nu,
\end{equation*}
then Boussinesq system \eqref{eq:BsnqN} has a unique global solution
\begin{equation*}
(\theta, u, \nabla \Pi) \in C(\R_{+};B^{0}_{N,1}) \times \big(C(\R_{+};B^{\frac{N}{p}-1}_{p,1}) \cap L^1_{loc}(\R_{+};B^{\frac{N}{p}+1}_{p,1})\big)^N \times \big( L^1_{loc}(\R_{+};B^{\frac{N}{p}-1}_{p,1})\big)^N.
\end{equation*} 
\medbreak

Moreover, suppose that the vector field $X_0$ is in the space $\big(\widetilde{\sC}^{\ep}(\R^N)\big)^N$ defined by

\begin{equation*}
\big( \widetilde{\sC}^{\ep}(\R^N) \big)^N:= \{ Y \in \big(\sC^{\ep}(\R^N)\big)^N: \div Y \in \sC^{\ep}(\R^N)\},
\end{equation*}
and that the components of $(\d_{X_0}\theta_0, \d_{X_0}u_0)$ are in $\sC^{\ep-2}(\R^N).$ Then the System \eqref{eq:X} has a unique solution $X\in C_w(\R_+;\wt\sC^\ep),$  that satisfies  for all $t\geq0,$
\begin{equation*}
\|X\|_{L^{\infty}_{t}(\widetilde{\sC}^{\ep})} \leq C_{0,\nu} \exp \big( \exp ( C_{0, \nu}t) \big)
\end{equation*}
with some constant $C_{0, \nu} $ depending only on  the initial data and on $\nu.$

Furthermore,  the triplet $( \d_X \theta ,\d_X u, \d_X\nabla\Pi)$  belongs to  
\begin{equation*}
 C_w(\R_+; \sC^{\ep-2}) \times \big(C_w (\R_+; \sC^{\ep-2}) \cap \widetilde{L}^1_{loc}(\R_+;\sC^{\ep}) \big)^N \times \big(\widetilde{L}^1_{loc}(\R_+;\sC^{\ep-2})\big)^N.
\end{equation*}
\end{thm}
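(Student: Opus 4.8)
\textbf{Proof proposal for Theorem \ref{thm:dXBsnqN}.}

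The plan is to split the argument into three stages, following the same philosophy as in the 2-D case but adapted to the higher-dimensional regime where we track $\d_Xu$ directly rather than a divergence-structured quantity. First I would establish the global existence of $(\theta,u,\nabla\Pi)$ in the critical spaces indicated, under the smallness condition $\|u_0\|_{L^{N,\infty}}+\nu^{-1}\|\theta_0\|_{L^{N/3}}\le c\nu$. This is essentially the result of \cite{DanP2008a}: the $L^{N,\infty}$ and $L^{N/3}$ smallness is what makes the nonlinear estimates close globally, exactly as for the scaling-critical Navier--Stokes theory. The propagation of the additional Besov regularity $B^{N/p-1}_{p,1}$ for $u_0$ and $B^0_{N,1}$ for $\theta_0$ then follows by the standard transport and transport-diffusion estimates in Besov spaces (see \cite{BCD2011}, Chap. 3), exploiting that the velocity built in the first step is Lipschitz in space (which holds since $B^{N/p-1+1}_{p,1}\hookrightarrow\mathrm{Lip}$ and $u\in L^1_{loc}(\R_+;B^{N/p+1}_{p,1})$), and the pressure is recovered from the elliptic equation $\Delta\Pi=\div(\theta e_N-u\cdot\nabla u)$, which gives the stated $L^1_{loc}(\R_+;B^{N/p-1}_{p,1})$ bound.

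Next I would set up the system for the striated quantities. Applying $\d_X$ to the temperature equation gives $\d_t\d_X\theta+u\cdot\nabla\d_X\theta=0$ (using \eqref{eq:X} and $\div u=0$), a pure transport equation, so $\d_X\theta$ merely propagates its initial $\sC^{\ep-2}$ regularity along the flow; since $u$ is Lipschitz, the standard transport estimate in $\sC^{\ep-2}$ closes with an exponential-of-$\int\|\nabla u\|_{L^\infty}$ factor. Applying $\d_X$ to the momentum equation yields a transport-diffusion equation for $\d_Xu$ of the form $\d_t\d_Xu+u\cdot\nabla\d_Xu-\nu\Delta\d_Xu+\d_X\nabla\Pi=\d_X(\theta e_N)+[\text{commutator terms}]$, where the commutators gather $\d_X(u\cdot\nabla u)-u\cdot\nabla\d_Xu=\d_{\d_Xu}u$ and the second-order mismatch $\nu(\d_X\Delta u-\Delta\d_Xu)$. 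The latter is the delicate one: it is genuinely second order in $u$, but it can be rewritten using $\Delta\d_Xu-\d_X\Delta u=2\nabla X:\nabla^2 u+(\Delta X)\cdot\nabla u$ — schematically a product of $\nabla X$ with $\nabla^2 u$ — and one absorbs the worst piece by noting that, after commuting, the combination that actually appears is controllable in $\sC^{\ep-2}$ provided $X\in\wt\sC^\ep$ and $\nabla^2u$ is measured in the right Besov space along the heat flow; this is exactly why one needs $\div X\in\sC^\ep$ in the definition of $\wt\sC^\ep$, so that the "bad" divergence part of the commutator is tamed. The pressure term $\d_X\nabla\Pi$ is handled by applying $\d_X$ to the elliptic equation for $\Pi$ and using the Calder\'on--Zygmund / para-product machinery in $\sC^{\ep-2}$, commuting $\d_X$ with $\Delta^{-1}\nabla\div$.

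Then I would run a bootstrap/continuation argument: assuming $X\in L^\infty_T(\wt\sC^\ep)$ and the striated quantities in the asserted spaces on $[0,T]$, the heat-smoothing estimate in the Chemin--Lerner spaces $\wt L^1_T(\sC^\ep)$ applied to the $\d_Xu$ equation gives $\d_Xu\in\wt L^1_T(\sC^\ep)$ from a bound on the right-hand side in $\wt L^1_T(\sC^{\ep-2})$; the source $\d_X(\theta e_N)$ is controlled by $\|\d_X\theta\|_{L^\infty_T(\sC^{\ep-2})}$ (already handled) and the commutators by para-product estimates bilinear in $\|X\|_{\wt\sC^\ep}$ and $\|u\|$ in the critical norm. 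Feeding $\d_Xu\in\wt L^1_T(\sC^\ep)$ back into the transport equation \eqref{eq:X} for $X$ — whose right-hand side is precisely $\d_Xu$ — closes the estimate for $\|X\|_{L^\infty_T(\wt\sC^\ep)}$ via the logarithmic interpolation/transport inequality (the one that produces the double exponential; here, since the velocity smallness gives an \emph{integrable-in-time} Lipschitz norm with small integral, one only loses a double exponential rather than the quadruple one of the 2-D large-data case, matching the stated $C_{0,\nu}\exp(\exp(C_{0,\nu}t))$). Uniqueness of $X$ is immediate from linearity of \eqref{eq:X} and the Lipschitz bound on $u$; the weak continuity in time follows from the uniform bounds plus the equations, by the usual Aubin--Lions-type / weak-$*$ compactness argument. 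I expect the main obstacle to be the rigorous treatment of the second-order commutator $\nu(\d_X\Delta u-\Delta\d_Xu)$ in the negative-index space $\sC^{\ep-2}$: one must carefully decompose it with Bony's paraproduct, check that every piece lands in $\wt L^1_T(\sC^{\ep-2})$ using the full strength of the heat-flow gain (two derivatives) together with $\div X\in\sC^\ep$, and verify that no term requires more than $\sC^\ep$ regularity on $X$ — this is the technical heart that distinguishes the viscous problem from the inviscid vortex-patch estimates.
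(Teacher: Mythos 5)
Your first stage (global existence in the critical nonhomogeneous spaces) and your treatment of $\d_X\theta$ by pure transport match the paper. But the heart of your argument --- applying $\d_X$ directly to the momentum equation and bounding the second-order commutator $\nu(\d_X\Delta u-\Delta\d_Xu)$ in $\wt L^1_t(\sC^{\ep-2})$ --- contains a genuine gap that cannot be repaired at this level of regularity. Writing $\Delta\d_Xu-\d_X\Delta u=2\,\d_jX^k\,\d_j\d_k u+(\Delta X^k)\,\d_k u$ and decomposing by Bony's formula, the paraproduct pieces are fine, but the remainder $R(\nabla X,\nabla^2u)$ requires $(\ep-1)+(\frac Np-1)>0$, i.e.\ $\ep+\frac Np>2$, while the hypotheses only give $1<\ep+\frac Np<2$ (since $N<p<\frac N{1-\ep}$); likewise $R(\Delta X,\nabla u)$ would need $\frac Np>2-\ep>1$, which is false. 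So the ``product of $\nabla X$ with $\nabla^2u$'' is simply not controllable in $\sC^{\ep-2}$ with $X$ only in $\wt\sC^\ep$, and the extra assumption $\div X\in\sC^\ep$ does not tame these resonant terms (in the paper it serves a different purpose, namely Lemma \ref{lemma:ce3}(ii), comparing $\d_Xf$ with $\cT_Xf$ for low-regularity $f$). A second, related gap: $\d_Xu$ is not divergence free, and restoring the divergence-free condition by subtracting $\d_uX-u\,\div X$ creates the source term $\Delta\d_uX$, which again costs two derivatives of $X$; the paper points out explicitly that this route loses regularity and breaks the global estimates.

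The paper's way around this is precisely what your proposal is missing: replace the directional derivative by the para-vector field $\cT_X:=T_{X^k}\d_k$ and propagate $\cT_Xu$ instead of $\d_Xu$. Then the Laplacian commutator is $[\cT_X,\Delta]u=[T_{X^k},\Delta]\d_ku$, which contains no remainder term and is bounded by Lemma \ref{lemma:ce1} as $\|\nabla X\|_{\sC^{\ep-1}}\|\nabla u\|_{\sC^0}$; the convective commutator $[\cT_X,\d_t+u\cdot\nabla]u$ is handled by the dedicated Proposition \ref{prop:ce_dBnsqN} (built on Lemma \ref{lemma:key_Bnsq}); the divergence defect of $\cT_Xu$ is only $\div(T_{\d_kX}u^k-T_{\div X}u)$, so one studies the true Stokes system satisfied by $v:=\cT_Xu-w$ with $w:=T_{\d_kX}u^k-T_{\div X}u$ and applies the heat-flow smoothing there. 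Finally $\d_Xu$ (and $\d_X\nabla\Pi$) are recovered a posteriori from $\cT_Xu$ through Lemma \ref{lemma:ce3}, the difference being lower order --- this is where $\div X\in\sC^\ep$, i.e.\ $X\in\wt\sC^\ep$, is actually needed. Without this para-vector field device (or an equivalent substitute), the estimate you call the ``technical heart'' fails, so the proposal as written does not prove the striated-regularity part of the theorem.
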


As in the 2-D case, the above result will enable us to solve the temperature patch problem. 
Before giving the exact statement, let  us recall what a  $C^{1,\ep}$ domain is in dimension $N\geq2.$ 
\begin{Def}\label{def:domain}
A simply connected bounded domain $\cD \subset \R^N$ is of class $C^{1,\ep}$ if  its boundary  $\d \cD$ is some compact hypersurface of class $C^{1,\ep}$. 
\end{Def}
Fix some domain  $\cD_0$  of class $C^{1,\ep}$ and further consider
another $C^1$ simple bounded domain $J_0$ such that
$\overline{\cD_0} \subset J_0.$   Then we have the following statement\footnote{Like in the 2-D case, it goes without saying that much 
more general initial velocities may be considered}:
\begin{cor} \label{cor:striated_BsnqN} 
Let $N\geq3$ and $(m_1, m_2)$ be a pair of sufficiently small constants. Assume that $\theta_0 = m_1 \mathds{1}_{\mathcal{D}_0}$  and that  the initial vorticity 
$\Omega_0 := \hbox{curl}~u_0$, i.e. $(\Omega_0)^{i}_{j} := \d_{j} u_0^i -\d_{i} u_0^j $ for any $i,j=1,...,N,$
satisfies  $\Omega_0:= m_2\mathds{1}_{J_0} \mathbb{A}_0$ where   $\mathbb{A}_0$ stands for 
the  anti-symmetric matrix defined by  $(\mathbb{A}_0)^i_j=1$ for  $i<j$.  

Then $\theta(t,\cdot) = m_1 \mathds{1}_{\mathcal{D}_t}$  where $\mathcal{D}(t) := \psi_{u} (t,\mathcal{D}_0)$ and $\mathcal{D}(t)$ remains a simply connected domain of class $C^{1,\ep}$, for any $t \geq 0$. 
\end{cor}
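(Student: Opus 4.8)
The plan is to deduce Corollary \ref{cor:striated_BsnqN} from Theorem \ref{thm:dXBsnqN} by constructing a suitable vector field $X_0$ adapted to $\d\mathcal{D}_0$ and then transporting the geometric information along the flow. First I would check that the initial data fall within the scope of Theorem \ref{thm:dXBsnqN}: since $\mathcal{D}_0$ and $J_0$ are bounded with $C^{1,\ep}$ (resp.\ $C^1$) boundary, the characteristic functions $\mathds{1}_{\mathcal{D}_0}$ and $\mathds{1}_{J_0}$ belong to $B^0_{N,1}(\R^N)\cap L^{N/3}(\R^N)$ and to $B^{N/p-1}_{p,1}(\R^N)\cap L^{N,\infty}(\R^N)$ for any $p\in\,]N,\frac N{1-\ep}[$ (this is the standard fact that $\mathds{1}$ of a $C^1$ domain lies in $B^{1/s}_{s,\infty}$, hence in the slightly-less-regular $B^{N/p-1}_{p,1}$). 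Recovering $u_0$ from $\Omega_0=m_2\mathds{1}_{J_0}\mathbb{A}_0$ via a Biot--Savart type formula gives a divergence-free $u_0$ with the required regularity; smallness of $m_1,m_2$ then guarantees the smallness condition $\|u_0\|_{L^{N,\infty}}+\nu^{-1}\|\theta_0\|_{L^{N/3}}\le c\nu$, so the global solution $(\theta,u,\nabla\Pi)$ of Theorem \ref{thm:dXBsnqN} exists.

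Next I would choose the initial striated vector field. Fix a collection of vector fields $\{X_0^\lambda\}_{\lambda}$ of class $\sC^\ep$ that are tangent to $\d\mathcal{D}_0$, do not vanish simultaneously in a neighborhood of $\d\mathcal{D}_0$, and are compactly supported; a convenient concrete choice is $X_0=\nabla^\perp$-type fields built from a $C^{1,\ep}$ defining function $f$ of $\mathcal{D}_0$ (so that $\d\mathcal{D}_0=f^{-1}(0)$, $\nabla f\neq0$ there), truncated away from $\d\mathcal{D}_0$. One must then verify the two structural hypotheses of the theorem: $X_0\in(\wt\sC^\ep)^N$, i.e.\ $\div X_0\in\sC^\ep$ (automatic for the $\nabla^\perp f$-type construction up to the smooth cut-off), and that $\d_{X_0}\theta_0$ and the components of $\d_{X_0}u_0$ lie in $\sC^{\ep-2}$. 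For $\theta_0=m_1\mathds{1}_{\mathcal{D}_0}$ this is where the tangency of $X_0$ to $\d\mathcal{D}_0$ is crucial: $\d_{X_0}\mathds{1}_{\mathcal{D}_0}=X_0\cdot\nabla\mathds{1}_{\mathcal{D}_0}$ is (up to the cut-off) a distribution supported on $\d\mathcal{D}_0$ which, because $X_0$ is tangent, is actually more regular than a bare surface measure — one shows it lies in $\sC^{\ep-2}$ by a paraproduct/duality argument, exactly as in the vortex patch theory. For $\d_{X_0}u_0$, writing $u_0$ through the singular-integral operator applied to $\Omega_0=m_2\mathds{1}_{J_0}\mathbb{A}_0$ and commuting $\d_{X_0}$ with that operator (a commutator that gains regularity when $X_0\in\sC^\ep$) gives the $\sC^{\ep-2}$ bound.

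With all hypotheses verified, Theorem \ref{thm:dXBsnqN} produces a unique $X\in C_w(\R_+;\wt\sC^\ep)$ solving \eqref{eq:X} with the quantitative bound $\|X\|_{L^\infty_t(\wt\sC^\ep)}\le C_{0,\nu}\exp(\exp(C_{0,\nu}t))$, together with $\d_X\theta\in C_w(\R_+;\sC^{\ep-2})$. Since $\theta$ solves $\d_t\theta+u\cdot\nabla\theta=0$ and $\theta_0=m_1\mathds{1}_{\mathcal{D}_0}$, uniqueness for the transport equation gives $\theta(t,\cdot)=m_1\mathds{1}_{\mathcal{D}_t}$ with $\mathcal{D}_t=\psi_u(t,\mathcal{D}_0)$, and $\mathcal{D}_t$ stays simply connected and bounded because $\psi_u(t,\cdot)$ is a homeomorphism of $\R^N$. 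The final point — that $\d\mathcal{D}_t$ is of class $C^{1,\ep}$ — follows as in Section \ref{sect:5}: the evolved vector fields $X_t$ remain tangent to $\d\mathcal{D}_t$ and non-degenerate near it (non-degeneracy is transported because the flow is bi-Lipschitz), and by the classical argument (à la Chemin) a $\sC^\ep$ family of tangent non-vanishing vector fields to a $C^0$ Jordan hypersurface forces that hypersurface to be $C^{1,\ep}$; here one also needs $\psi_u(t,\cdot)\in C^{1,\ep}$ locally near $\d\mathcal{D}_0$, which comes from $\nabla u\in L^1_{loc}(\R_+;\sC^\ep)$ \emph{along the striation} even though $\nabla u$ is not $\sC^\ep$ globally.

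I expect the main obstacle to be precisely this last step in dimension $N\ge3$: unlike the 2-D case, we do not control a scalar like $\div(X\omega)$ and the curl has a stretching term, so propagating the $C^{1,\ep}$ regularity of the boundary must be done purely through $\d_Xu\in\wt L^1_{loc}(\R_+;\sC^\ep)$ and $\div X\in\sC^\ep$. Making the regularity of the flow $\psi_u$ restricted to a neighborhood of the moving boundary rigorous — i.e.\ showing that tangential-only control of $\nabla u$ suffices to upgrade the Jordan hypersurface from $C^0$ to $C^{1,\ep}$ uniformly in time, with the stated (doubly exponential) growth — is the delicate part and will require a careful local parametrization argument together with the logarithmic estimate relating $\|\nabla u\|_{L^\infty}$ to $\|\d_Xu\|_{\sC^\ep}$, $\|X\|_{\sC^\ep}$ and a lower bound on $|X|$ near $\d\mathcal{D}_t$.
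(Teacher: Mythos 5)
Your reduction steps (regularity of the data, recovery of $u_0$ from $\Omega_0$ and smallness, applicability of Theorem \ref{thm:dXBsnqN} to fields $X_0$ with $\d_{X_0}\theta_0,\d_{X_0}u_0\in\sC^{\ep-2}$ — the latter obtained by commuting with the Biot--Savart operator, which is essentially the paper's estimate \eqref{es:dXu_PdXOmega} — and the fact that $\theta(t,\cdot)=m_1\mathds{1}_{\cD_t}$) all match the paper. But there is a genuine gap exactly where you yourself locate "the main obstacle": you never prove that striated control of $\d_Xu$ along tangent fields upgrades the $C^0$ hypersurface $\d\cD_t$ to $C^{1,\ep}$ in dimension $N\geq3$. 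The route you sketch is moreover not viable as stated: you invoke "$\psi_u(t,\cdot)\in C^{1,\ep}$ locally near $\d\cD_0$", but the flow is not locally $C^{1,\ep}$ in general (only striated regularity is available, and the paper never establishes nor uses such local flow regularity), and the assertion that a non-vanishing $\sC^\ep$ family of tangent fields forces $C^{1,\ep}$ regularity of a codimension-one surface requires the family to \emph{span} the tangent space and is itself a nontrivial Gamblin--Saint-Raymond-type lemma which you do not supply. A secondary point: for a general (merely $\sC^\ep$, tangent) field the product $X_0\cdot\nabla\mathds{1}_{\cD_0}$ is not even well defined by paraproduct alone since $\ep+(\ep-2)<0$; the paper sidesteps this by using either smooth truncated coordinate fields or exactly tangent ones for which the derivative vanishes identically.

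The paper closes this gap by a purely algebraic/transport device that your proposal is missing. It introduces \emph{two} families solving \eqref{eq:X}: the smooth, non-tangent fields $E_j$ with $E_{j,0}=\chi_0 e_j$ (admissible because $\nabla\theta_0\in B^{\frac1q-1}_{q,\infty}\hookrightarrow\sC^{\ep-2}$ and $\d_{E_{j,0}}\Omega_0=0$ by the support condition relative to $J_0$), and the divergence-free tangent fields $Y_\lambda$ with $Y_{\lambda,0}=e_{\lambda_1}\wedge\cdots\wedge e_{\lambda_{N-2}}\wedge\nabla f_0$ (for which $\d_{Y_{\lambda,0}}\theta_0=\d_{Y_{\lambda,0}}\Omega_0=0$). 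Theorem \ref{thm:dXBsnqN} gives $\sC^\ep$ bounds on all of them, and the key observation is that any wedge of $N-1$ solutions of \eqref{eq:X} satisfies the cotangent equation \eqref{eq:wedgeX}, i.e.\ the same equation as $\nabla f$ where $f$ is the transported level-set function. Since $\sum_{\lambda\in\Lambda}W_{\lambda,0}=-(N-1)\nabla f_0$ for $W_\lambda:=E_{\lambda_1}\wedge\cdots\wedge E_{\lambda_{N-2}}\wedge Y_\lambda$, uniqueness for the linear transport equation yields $\nabla f=-\frac1{N-1}\sum_\lambda W_\lambda\in\big(L^\infty_{loc}(\R_+;\sC^\ep)\big)^N$, so $\d\cD_t=f(t,\cdot)^{-1}(\{0\})$ (locally) is $C^{1,\ep}$ directly, with no local parametrization argument, no logarithmic estimate, and no regularity of the flow beyond Lipschitz. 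Without this (or an equivalent substitute such as a full proof of the spanning-tangent-family criterion), your argument does not establish the conclusion of the corollary.
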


The rest of the paper unfolds as follows. In the next section,  we shortly introduce Besov spaces and present some linear or nonlinear estimates, which will be needed to achieve our  results. Then the proofs of main theorems for the propagation of striated regularity will be revealed in Section \ref{sect:striated_Bsnq2} (for 2-D case) and Section \ref{sect:thm_dXBsnqN} (for N-D case). Section
\ref{sect:5} is devoted to  the temperature patch problems. Some  technical commutator bounds are  proved in the Appendix.


\section{Basic notations and linear estimates}\label{sect:2}

We here introduce definitions and notations that are  used throughout the text, 
and recall some properties of Besov spaces and transport or transport-diffusion equations.

Let us begin with the definition of the \emph{nonhomogeneous Littlewood-Paley decomposition}  (for more details see \cite{BCD2011}, Chap. 2). Set $\mathcal{B}:= \{ \xi \in \R^N :|\xi| \leq {4}/{3} \}$ and $\mathcal{C}:= \{ \xi \in \R^N : {3}/{4} \leq |\xi| \leq {8}/{3} \}$. We fix two  smooth radial functions $\chi$ and $\varphi,$ supported in $\mathcal{B}$ and $\mathcal{C}$, respectively, and such that
\begin{equation*}
\chi(\xi) + \sum_{j\geq 0} \varphi(2^{-j}\xi)=1, ~~~\forall ~\xi \in \R^N.
\end{equation*}
We then introduce the Fourier multipliers $\Delta_{-1}:= \chi(D)$ and $\Delta_j := \varphi(2^{-j}D)$ with $j \geq 0$ 
(the so-called \emph{nonhomogeneous dyadic blocks})  and  the low frequency cut-off operator  $$S_j := \sum_{j' \leq j-1} \Delta_{j'}.$$

 With those notations, the nonhomogeneous Besov space  $B^s_{p,r}(\R^N)$ may be defined~by
\begin{equation*}
B^{s}_{p,r}(\R^N):= \{ u \in \mathcal{S}'(\R^N): \|u\|_{B^s_{p,r}} :=\big\|2^{js}\|\Delta_j u \|_{L^p} \big\|_{\ell^r(\N\cup\{-1\}) } <\infty\},
\end{equation*}
for   $(s,p,r) \in \R \times [1,\infty]^2$.
\medbreak

Let us next introduce the following \emph{paraproduct} and \emph{remainder} operators:
$$T_u v := \sum_{j\geq-1} S_{j-N_0}u \Delta_j v\quad\hbox{and}\quad 
R(u,v) \equiv \sum_{j\geq-1} \Delta_{j} u \tDelta_{j} v := \sum_{\genfrac{}{}{0pt}{}{j\geq-1}{|j-k|\leq N_0}} \Delta_{j} u \Delta_{k} v,
$$
where $N_0$ stands for some large enough (fixed) integer.
\medbreak

The following  decomposition, first introduced by J.-M. Bony in \cite{Bo1981}:
\begin{equation}\label{eq:bony}
uv = T_u v + T_v u + R(u,v),
\end{equation} 
  holds true whenever  the product of the two tempered distribution $u$ and $v$
is defined. It will play a   fundamental role   in our study.
\medbreak
 
Bilinear operators $R$ and $T$ possess continuity properties in a number of functional spaces
(see e.g. Chap. 2 in  \cite{BCD2011}). We shall recall a few of them throughout the text,  when needed.
\medbreak

When   investigating evolutionary equations in Besov spaces and, in particular, parabolic type equations, it is natural to use
the following \emph{tilde Besov spaces} first introduced by J.-Y. Chemin  in \cite{Che1999}:
 for any $t\in ]0,\infty]$ and $(s,p,r,\rho) \in \R \times [1,\infty]^3$, we set
\begin{equation*}
\widetilde{L}^\rho_t \big(B^{s}_{p,r}(\R^N)\big):= \Bigl\{ u \in \mathcal{S}'(]0,t[ \times \R^N): \|u\|_{\widetilde{L}^\rho_t (B^{s}_{p,r})} :=\big\|2^{js}\|\Delta_j u\|_{L^\rho(]0,t[;L^p)} \big\|_{\ell^r } <\infty\Bigr\}\cdotp
\end{equation*}
In the particular case where $p=r=2$ (resp. $p=r=\infty$), $B^s_{p,r}$ coincides with the Sobolev space $H^s$ (resp. the generalized H\"older space
$\sC^s$),  and we shall
alternately denote 
\begin{equation}\label{spc:tilde_H_C}
\widetilde{L}^\rho_t \big(H^{s}(\R^N)\big):=\widetilde{L}^\rho_t \big( B^{s}_{2,2}(\R^N)\big)\quad\hbox{and}\quad
\widetilde{L}^\rho_t \big(\sC^{s}(\R^N)\big):=\widetilde{L}^\rho_t \big( B^{s}_{\infty,\infty}(\R^N)\big).
\end{equation}
Let us next state some a priori estimates for the transport and transport-diffusion equations in (nonhomogeneous) Besov spaces. 
\begin{prop}\label{prop:trans_diff} Assume that $v$ is a divergence free vector field. 
Let $(p,p_1,r,\rho,\rho_1) \in [1,\infty]^5$ and $s\in\R$ satisfy 
\begin{equation*}
 -1-\min\biggl(\frac{N}{p_1}, \frac N{p'}\biggr)<s< 1+ \min\biggl(\frac Np,\frac N{p_1}\biggr) ~~~\mbox{  and  }~~~ \rho_1 \leq \rho .
\end{equation*}
Let $f$ be a smooth solution of   the following transport-diffusion equation with diffusion parameter  $\nu \geq 0$:
\begin{equation*}\tag{$TD_{\nu}$}
\left\{
\begin{array}{l}
 \d_t f + \div(fv) - \nu \Delta f = g,\\
 f|_{t=0}=f_0. \\
\end{array}
\right.
\end{equation*}
Then there exists a constant C depending on $N,$ $p,$ $p_1$  and $s$ such that for all $t\geq0$,
\begin{multline}\label{es:transdiff}
\nu^{\frac{1}{\rho}} \|f\|_{\widetilde{L}^{\rho}_{t}(B^{s+\frac{2}{\rho}}_{p,r})} \leq C e^{C(1+\nu t)^{\frac{1}{\rho}}V_{p_1}(t)}\Big( (1+\nu t)^{\frac{1}{\rho}} \|f_0\|_{B^{s}_{p,r}} \\
 +(1+\nu t)^{1+\frac{1}{\rho}-\frac{1}{\rho_1}} \nu ^{\frac{1}{\rho_1}-1} \|g\|_{\widetilde{L}^{\rho_1}_{t}(B^{s-2+\frac{2}{\rho_1}}_{p,r})} \Big),
\end{multline}
where 
\begin{equation*}
V_{p_1} (t):= \int^t_0 \|\nabla v\|_{B^{\frac{N}{p_1}}_{p_1,\infty} \cap L^\infty}\,dt'.
\end{equation*}
In  the limit case $s=-1-\min\bigl(\frac{N}{p_1}, \frac{N}{p'}\bigr),$ one just need to refine 
 $ \|\nabla v\|_{B^{\frac{N}{p_1}}_{p_1,\infty} \cap L^\infty}$ by $\|\nabla v\|_{B^{\frac{N}{p_1}}_{p_1,1}}$ in the definition of $V_{p_1}.$
\end{prop}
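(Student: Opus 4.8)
\textbf{Plan of proof for Proposition \ref{prop:trans_diff}.}
The plan is to localize in frequency and then, on each dyadic block, reduce to an ODE with a damping term coming from the Laplacian. Applying $\Delta_j$ to $(TD_\nu)$ and using $\div v=0$ to write $\div(fv)=v\cdot\nabla f$, we obtain
\begin{equation*}
\d_t \Delta_j f + v\cdot\nabla \Delta_j f - \nu\Delta\Delta_j f = \Delta_j g + R_j,\qquad
R_j := [v\cdot\nabla,\Delta_j]f.
\end{equation*}
First I would handle the transport part by an $L^p$ energy estimate along the flow of $v$: multiplying by $|\Delta_j f|^{p-2}\Delta_j f$, integrating in space, and using the classical spectral (Bernstein-type) lower bound $\int |\Delta_j f|^{p-2}\Delta_j f\,(-\Delta\Delta_j f)\,dx \gtrsim 2^{2j}\|\Delta_j f\|_{L^p}^p$ to extract a genuine exponential decay factor $e^{-c\nu 2^{2j}t}$ on each block. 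This yields, for each $j$,
\begin{equation*}
\|\Delta_j f(t)\|_{L^p} \lesssim e^{-c\nu 2^{2j}t}\,e^{\int_0^t\|\nabla v\|_{L^\infty}}\Bigl(\|\Delta_j f_0\|_{L^p} + \int_0^t e^{c\nu 2^{2j}t'}\bigl(\|\Delta_j g(t')\|_{L^p} + \|R_j(t')\|_{L^p}\bigr)\,dt'\Bigr),
\end{equation*}
and then I would take the $L^\rho(0,t)$ norm in time of the convolution against $e^{-c\nu 2^{2j}\cdot}$, which by Young's inequality produces the gain of $2/\rho$ derivatives together with the factor $\nu^{-1/\rho}$ on the left and $\nu^{1/\rho_1-1}$ on the right (the condition $\rho_1\le\rho$ being exactly what Young's inequality requires here). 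The polynomial prefactors $(1+\nu t)^{\cdots}$ arise from being careful near $j=-1$ and from interpolating the low-frequency block where the heat decay is not effective.

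The second ingredient is the commutator estimate for $R_j=[v\cdot\nabla,\Delta_j]f$. Using the Bony decomposition \eqref{eq:bony} to split $v\cdot\nabla f$ into paraproduct and remainder terms and commuting each piece with $\Delta_j$, one gets the standard bound
\begin{equation*}
\bigl\|2^{js}\|R_j\|_{L^p}\bigr\|_{\ell^r} \lesssim \|\nabla v\|_{B^{N/p_1}_{p_1,\infty}\cap L^\infty}\,\|f\|_{B^s_{p,r}},
\end{equation*}
valid precisely in the stated range $-1-\min(N/p_1,N/p')<s<1+\min(N/p,N/p_1)$; in the endpoint $s=-1-\min(N/p_1,N/p')$ one loses the $\ell^\infty$ summability in the paraproduct term and must upgrade $\|\nabla v\|_{B^{N/p_1}_{p_1,\infty}}$ to the $\ell^1$-summed norm $\|\nabla v\|_{B^{N/p_1}_{p_1,1}}$, which is the refinement noted in the statement. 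After multiplying the per-block estimate by $2^{j(s+2/\rho)}$, taking $\ell^r$ and time norms, the commutator contributes a term $\int_0^t \|\nabla v\|_{\cdots}\,\|f\|_{\widetilde L^\infty_{t'}(B^s_{p,r})}\,dt'$, which combines with the $e^{\int\|\nabla v\|_{L^\infty}}$ factor to be absorbed into the Gronwall exponent $e^{C(1+\nu t)^{1/\rho}V_{p_1}(t)}$.

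I expect the main obstacle to be bookkeeping the exponential heat factors together with the time integrability exponents: one must carefully track how the $L^\rho_t$ norm of $t\mapsto\int_0^t e^{-c\nu 2^{2j}(t-t')}h(t')\,dt'$ depends on $\nu$, $2^{2j}$, and the integrability of $h$, so that after summing over $j$ in $\ell^r$ the gain of regularity and the powers of $\nu$ come out exactly as claimed, uniformly in $j$ including the low-frequency block. A secondary delicate point is making the argument rigorous for merely smooth (rather than decaying) solutions, which is why the statement assumes $f$ is a smooth solution and the estimate is proved a priori; the commutator estimate and the spectral localization are where the constraints on $s$, $p$, $p_1$ genuinely enter, and getting the endpoint case right is the part that requires the most care.
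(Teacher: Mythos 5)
Your overall strategy --- dyadic localization, a per-block $L^p$ estimate in which the Laplacian yields the decay $e^{-c\nu 2^{2j}t}$ for $j\geq 0$, Young's inequality in time for the Duhamel convolution (this is where $\rho_1\le\rho$ enters), a Bony-type commutator estimate for $[v\cdot\nabla,\Delta_j]f$, separate treatment of the block $j=-1$, and a Gronwall absorption --- is precisely the proof of the cited reference \cite{Dan2007}; the paper does not reproduce it, its proof consists of invoking that reference and supplying one missing ingredient. Two minor remarks on your per-block step: since $\div v=0$, the transport term integrates to zero against $|\Delta_j f|^{p-2}\Delta_j f$, so the factor $e^{\int_0^t\|\nabla v\|_{L^\infty}}$ is superfluous there (harmless, but it is the commutator, not this factor, through which $\nabla v$ must enter); and for $p\in\{1,\infty\}$ the multiplier argument needs the usual replacement.

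The genuine gap is the sentence where you call the commutator bound ``the standard bound\dots valid precisely in the stated range''. The standard estimate, and the one proved in \cite{Dan2007}, requires $p\le p_1$; the entire content of the paper's written proof is its extension to $p>p_1$. The obstruction sits in exactly one term of the splitting of $[v\cdot\nabla,\Delta_j]f$, namely $R_j^3:=-\Delta_j T_{\partial_k f}\tilde v^k$ with $\tilde v:=v-S_0v$: the blocks of $\tilde v$ are controlled only in $L^{p_1}$, and when $p>p_1$ you cannot reach $L^p$ by H\"older after putting the low frequencies of $\nabla f$ in $L^\infty$. One must trade integrability back, e.g.\ by Bernstein on the frequency-localized product, $\|\cdot\|_{L^p}\lesssim 2^{jN(1/p_1-1/p)}\|\cdot\|_{L^{p_1}}$, or equivalently (as the paper does) by using the embedding $B^{s-1}_{p,r}\hookrightarrow B^{s-1-N/p}_{\infty,r}$ on the $\nabla f$ factor together with paraproduct continuity; the loss $2^{j(1-s+N/p)}$ coming from $\|S_{j-N_0}\nabla f\|_{L^\infty}$ is summable precisely because $s<1+N/p$, which is where that part of the hypothesis really enters for this term (the cruder choice of putting $\Delta_j\tilde v$ in $L^\infty$ and $S_{j-N_0}\nabla f$ in $L^p$ only covers $s<1$). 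So your plan is the right one, but to reach the stated range of $(s,p,p_1)$ you must add this Bernstein/embedding step for $R_j^3$ --- which is exactly the only argument the paper writes out.
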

\begin{proof}
The above statement has been proved in \cite{Dan2007} in the case $p\leq p_1.$
The generalization to the case $p > p_1$ is straightforward: 
in fact that restriction came  from the following 
commutator estimate\footnote{We here adopt the usual notation  $[A, B]$  for the commutator $AB-BA$.}:
\begin{equation*}
\big\|2^{js} \|[v\cdot \nabla, \Delta_j]f\|_{L^{p}} \big\|_{\ell^r} \lesssim \|\nabla v\|_{B^{\frac{N}{p_1}}_{p,1}}\|f\|_{B^{s}_{p,r}},
\end{equation*}
that has been proved only in the case $p \leq  p_1.$
\medbreak
To handle the case $p>p_1,$ we proceed exactly as in \cite{Dan2007}, 
decomposing  $R_j := [v\cdot \nabla, \Delta_j]f$
into  $R_j  =\sum_{i=1}^6 R_j^{i}$ with 
$$
\begin{array}{llllll}
R_j^{1}&:=&[T_{\tilde v^{k}},\Delta_{j}]\partial_{k}f,&\qquad
R_j^{2}&:=&T_{\partial_{k}\Delta_{j}f}\tilde v^{k},\\[1ex]
R_j^3&:=&-\Delta_{j}T_{\partial_{k}f}\tilde v^{k},&\qquad
 R_j^{4}&:=&\partial_kR(\tilde v^{k},\Delta_{j}f),\\[1ex]
 R_j^{5}&:=&-\partial_k\Delta_{j}R(\tilde v^{k},f),&\qquad
R_j^6&:=&[S_0v^k,\Delta_j]\partial_k f,
\end{array} 
$$ 
where  $\tilde{v}:= v - S_0 v.$
\medbreak
In  \cite{Dan2007}, Condition  $p \leq p_1 $ is used only when bounding $R^3_j.$
Now, if $p > p_1$ and $s < 1+\frac{N}{p}$ then combining standard continuity results for the paraproduct
with the embedding $B^{s-1}_{p,r}(\R^N)\hookrightarrow B^{s-1-\frac Np}_{\infty,r}(\R^N)$ implies that
$$
\big\| 2^{j(s+\frac{N}{p_1}-\frac{N}{p})}\|R_j^3\|_{L^p} \big\|_{\ell^r} \lesssim \|\nabla v\|_{B^{\frac{N}{p_1}}_{p_1,\infty}}\|\nabla f\|_{B^{s-1}_{p,r}},
$$
whence the desired inequality. 
\end{proof}

\begin{rmk} We shall often use the above proposition in the  particular case   $\nu =0$ and $(p,r,\rho,\rho_1) = (\infty,\infty,\infty,1).$
Then  Inequality \eqref{es:transdiff} reduces to
\begin{align*}
\|f\|_{L^{\infty}_{t}(\sC^{s})} \leq C e^{C V_{p_1}(t)}  &\big( \|f_0\|_{\sC^{s}} + \|g\|_{\widetilde{L}^{1}_{t}(\sC^{s})} \big)\quad\hbox{if}\quad
-1-\frac N{p_1}<s<1.
\end{align*}\end{rmk}

We finally recall a refinement of  Vishik's  estimates for the transport equation \cite{Vish1998}
obtained  by T. Hmidi and S. Keraani in \cite{HK2008}, which 
is the key to the study of long-time behavior of the solution in critical spaces for Boussinesq system \eqref{eq:BsnqN} (see  \cite{AH2007, DanP2008a}).
\begin{prop}\label{prop:trans_vishik}
Assume that $v$ is divergence-free and that $f$  satisfies the transport equation  $(TD_0)$. There exists a constant $C$ such that for all $(p,r) \in [1,\infty]^2$ and $t>0$, 
\begin{equation*}
\|f\|_{\widetilde{L}^{\infty}_{t}(B^{0}_{p,r})} \leq C \big( \|f_0\|_{B^{0}_{p,r}}+ \|g\|_{\widetilde{L}^{1}_{t}(B^{0}_{p,r})} \big) 
\biggl( 1+ \int_0^t\|\nabla v\|_{L^{\infty}}\,d\tau\biggr)\cdotp
\end{equation*}
\end{prop}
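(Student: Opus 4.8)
The plan is to localise the transport equation $(TD_0)$ in frequency, exploit the measure-preserving character of the flow of the divergence-free field $v$, and then replace the crude commutator-plus-Gronwall argument (which would only yield an exponential factor) by Vishik's sharper control of how the flow spreads frequency packets. Throughout, write $\psi$ for the flow of $v$ and set $V(t):=\int_0^t\|\nabla v\|_{L^\infty}\,d\tau$. First I would apply $\Delta_j$ to $(TD_0)$, obtaining
\[
\partial_t\Delta_j f+v\cdot\nabla\Delta_j f=\Delta_j g+R_j,\qquad R_j:=[v\cdot\nabla,\Delta_j]f .
\]
Since $\div v=0$, the flow $\psi(t,\cdot)$ preserves Lebesgue measure, so integrating the transported quantity $\Delta_j f\circ\psi$ along characteristics and using that composition with a measure-preserving map is an $L^p$ isometry gives, for every $j$,
\[
\|\Delta_j f(t)\|_{L^p}\le \|\Delta_j f_0\|_{L^p}+\int_0^t\big(\|\Delta_j g\|_{L^p}+\|R_j\|_{L^p}\big)\,d\tau .
\]
Taking the supremum over $[0,t]$ and then the $\ell^r(j)$ norm reduces everything to a summed estimate for the commutators $R_j$.

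The subtle point, and the main obstacle, is precisely the treatment of $R_j$. Bounding it in the standard way through Bony's decomposition \eqref{eq:bony} produces an estimate of the form $\|R_j\|_{L^p}\lesssim\|\nabla v\|_{L^\infty}\sum_{j'}2^{-|j-j'|}\|\Delta_{j'}f\|_{L^p}$; inserting this above, summing in $\ell^r$ and applying Gronwall reproduces exactly the exponential factor $e^{CV(t)}$ already contained in Proposition \ref{prop:trans_diff}, and is therefore useless for the present, sharper statement. To gain the merely \emph{linear} dependence on $V(t)$, I would instead reduce to the homogeneous case $g=0$ by Duhamel and argue directly on the flow representation $f(t)=f_0\circ\psi_t^{-1}$. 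The quantity to control is $\|\Delta_j\big((\Delta_{j'}f_0)\circ\psi_t^{-1}\big)\|_{L^p}$, which I bound by $K_{j,j'}(t)\,\|\Delta_{j'}f_0\|_{L^p}$ for a spreading kernel $K_{j,j'}$, and the heart of Vishik's lemma is to show the row- and column-sum bounds
\[
\sup_{j'}\sum_j K_{j,j'}(t)+\sup_{j}\sum_{j'}K_{j,j'}(t)\le C\big(1+V(t)\big).
\]
What makes this delicate is that $\|\nabla\psi_t^{\pm1}\|_{L^\infty}$ is only controlled by $e^{V(t)}$, so the bound above cannot be obtained by brute-force composition estimates; one must track the oscillation of $\psi_t^{-1}$ at each dyadic scale and exploit the cancellations coming from the mean-zero kernels of the $\Delta_j$, organising the interaction according to whether $j$ lies below, near, or above $j'$, so that the frequency spreading accumulates only linearly in $V(t)$. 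This is the single estimate where the Vishik and Hmidi--Keraani insight is genuinely used.

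Granting the kernel bound, a Schur test turns it into the operator estimate $\|(K_{j,j'})\|_{\ell^r\to\ell^r}\le C(1+V(t))$, whence $\|f(t)\|_{B^0_{p,r}}\le C(1+V(t))\|f_0\|_{B^0_{p,r}}$ in the homogeneous case. Restoring the source $g$ by Duhamel, the flow from time $\tau$ to $t$ acts on $g(\tau,\cdot)$ and its spreading kernel is bounded by $1+V(t)-V(\tau)\le 1+V(t)$; since the construction is carried out block by block, the time integration may be kept inside the $\ell^r$ sum, producing the $\widetilde L^1_t(B^0_{p,r})$ norm of $g$ (rather than $L^1_t(B^0_{p,r})$), which is exactly what the tilde space in \eqref{spc:tilde_H_C} records. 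Finally, taking the supremum over $[0,t]$ before summing in $j$ yields the $\widetilde L^\infty_t(B^0_{p,r})$ norm on the left, and collecting the two contributions gives the asserted inequality with the common factor $(1+V(t))$.
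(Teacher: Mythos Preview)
The paper does not prove this proposition at all; it simply \emph{recalls} it as a known result, citing Hmidi--Keraani \cite{HK2008} (who refined Vishik \cite{Vish1998}). So there is no ``paper's own proof'' to compare against.

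That said, your outline is a faithful sketch of the Hmidi--Keraani argument. The reduction to the homogeneous case by Duhamel, the flow representation $f(t)=f_0\circ\psi_t^{-1}$, the dyadic decomposition of $f_0$, and the spreading-kernel bound
\[
\sup_{j'}\sum_j K_{j,j'}(t)+\sup_{j}\sum_{j'}K_{j,j'}(t)\le C\bigl(1+V(t)\bigr)
\]
followed by a Schur test are precisely the steps in \cite{HK2008}. You are also right that the delicate point is obtaining \emph{linear} growth in $V(t)$ despite the a priori exponential bound $\|\nabla\psi_t^{\pm1}\|_{L^\infty}\le e^{V(t)}$; in \cite{HK2008} this is achieved by splitting the $(j,j')$ interaction into a near-diagonal band of width $O(V(t))$ (where the trivial $L^p$ bound suffices and contributes the factor $1+V(t)$) and the far-off-diagonal region (where the vanishing moments of the Littlewood--Paley kernels yield geometric decay independent of $V$). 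Your handling of the source term and the appearance of the tilde norms on both sides is also correct.
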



\section{Propagation of striated regularity in the 2D-case}\label{sect:striated_Bsnq2}

This section is devoted to the proof of  Theorem \ref{thm:dXBsnq2}. To simplify the computations, 
we shall first make the change of unknowns
\begin{equation}\label{change:Bsnq}
\wt{\theta}(t,x)=\nu^2\theta(\nu t,x),\quad \wt{u}(t,x)=\nu u(\nu t,x),\quad
\wt{ P}(t,x)=\nu^2 P(\nu t,x)
\end{equation}
so as to reduce the study to the case $\nu=1.$ 
\medbreak
Throughout  this section, we always assume that
\begin{equation}\label{cdt:striated_Bsnq2}
0<\ep<1, \quad q > 1  ~~\mbox{  and  }~~ \frac{\ep}{2} + \frac{1}{q} > 1,
\end{equation}
in accordance with the hypotheses of Theorem \ref{thm:dXBsnq2}.

\subsection{A priori estimates for the Lipschitz norm of the  velocity field}

Those estimates will be based on the following global existence theorem (see \cite{AH2007}).
\begin{thm}
Let $u_0$ be a divergence-free vector field belonging to the space $\big(L^2(\R^2) \cap B^{-1}_{\infty,1}(\R^2)\big)^2$ and 
let $\theta_0$ be in $B^{0}_{2,1}(\R^2)$. 
Then there exists a unique global solution $(u,\theta, \nabla\Pi)$ for System $(B_{1,2})$ such that 
\begin{equation}\label{spc:AH2007}
\begin{array}{c}
u \in \big( C(\R_{+};L^2 \cap B^{-1}_{\infty,1}) \cap L^{2}_{loc}(\R_{+}; H^1) \cap L^1_{loc}(\R_{+};B^{1}_{\infty,1})\big)^2,\\[1ex]
\theta \in C_{b}(\R_{+}; B^{0}_{2,1}) ~~\mbox{ and }~~\nabla \Pi \in \big( L^1_{loc}(\R_{+};B^{0}_{2,1})\big)^2.
\end{array}
\end{equation}
Moreover, for any $t>0$, there exists a constant $C_{0}$ depending only on the initial data  such that 
\begin{equation*} 
\|u\|_{L^{1}_{t}(B^{1}_{\infty,1})} \leq C_{0} e^{C_{0} t^4}.
\end{equation*}
\end{thm}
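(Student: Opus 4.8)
The architecture I would follow is the one now classical for two–dimensional fluid systems with transported, non–dissipated data. \emph{Local theory:} by Friedrichs regularisation (or a fixed point in the relevant Besov spaces) one first constructs a local–in–time solution together with the continuation criterion that it may be prolonged as long as $\int_0^t\|\nabla u\|_{L^\infty}\,d\tau<\infty$; the whole point is then to bound that integral globally. \emph{Cheap a priori bounds:} since $\theta$ is merely advected by the divergence–free field $u$, one has $\|\theta(t)\|_{L^p}=\|\theta_0\|_{L^p}$ for all $p\in[1,\infty]$, and inserting this into the $L^2$ energy identity for $u$ and using Gronwall gives $\|u(t)\|_{L^2}\le\|u_0\|_{L^2}+t\|\theta_0\|_{L^2}$, hence $\|\nabla u\|_{L^2_t(L^2)}\lesssim 1+t$ — only polynomial growth in time.

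\emph{Lipschitz control — the core.} To bound $\int_0^t\|\nabla u\|_{L^\infty}\,d\tau$ I would use the Hmidi–Keraani device that removes the awkward forcing $\d_1\theta$ from the vorticity equation \eqref{eq:voritcity_Bsnq2}. Set $\mathcal{R}:=\d_1(-\Delta)^{-1}$, so that $\Delta\mathcal{R}\theta=-\d_1\theta$, and let $\Gamma:=\omega-\mathcal{R}\theta$; using $\d_t\theta+u\cdot\nabla\theta=0$ one checks that
\begin{equation*}
\d_t\Gamma+u\cdot\nabla\Gamma-\Delta\Gamma=[\mathcal{R},u\cdot\nabla]\theta=:F,\qquad\Gamma|_{t=0}=\omega_0-\mathcal{R}\theta_0 .
\end{equation*}
The gain is that $F$ is a commutator of the order $-1$ operator $\mathcal{R}$ with the transport operator, hence much better than $\d_1\theta$: standard commutator estimates give, up to lower order terms, $\|F\|_{\sC^{-2}}\lesssim\|\nabla u\|_{L^\infty}\|\theta\|_{\sC^{-1}}$ with $\|\theta\|_{\sC^{-1}}\lesssim\|\theta\|_{B^0_{2,1}}$. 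Moreover $\Gamma_0=\omega_0-\mathcal{R}\theta_0\in B^{-2}_{\infty,1}$: indeed $\omega_0\in B^{-2}_{\infty,1}$ because $u_0\in B^{-1}_{\infty,1}$ (this is exactly where that assumption enters), while $\mathcal{R}\theta_0\in B^{-1}_{\infty,1}$ because $\theta_0\in B^0_{2,1}\hookrightarrow B^{-1}_{\infty,1}$ and $\mathcal{R}$ has order $-1$. Running Proposition~\ref{prop:trans_diff} (parabolic smoothing, $\nu=1$) on the $\Gamma$–equation then yields a bound for $\Gamma$ in $\widetilde L^1_t(B^0_{\infty,1})$ in terms of $\|\Gamma_0\|_{B^{-2}_{\infty,1}}$, $\|F\|_{L^1_t(B^{-2}_{\infty,1})}$ and $\int_0^t\|\nabla u\|_{L^\infty}$; in parallel, Proposition~\ref{prop:trans_vishik} (Vishik's estimate) on the temperature equation gives $\|\theta\|_{\widetilde L^\infty_t(B^0_{2,1})}\lesssim\|\theta_0\|_{B^0_{2,1}}\bigl(1+\int_0^t\|\nabla u\|_{L^\infty}\,d\tau\bigr)$, whence a bound for $\mathcal{R}\theta$ in $L^\infty_t(B^0_{\infty,1})$ through the same embedding.

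\emph{Closing the loop and the remaining regularities.} Recombining $\omega=\Gamma+\mathcal{R}\theta$ and using $\|\nabla u\|_{L^\infty}\lesssim\|\omega\|_{B^0_{\infty,1}}+\|u\|_{L^2}$ (the low frequencies of $\nabla u$ being controlled by the energy bound via Bernstein's inequality), one arrives at a closed nonlinear Gronwall inequality for $W(t):=\int_0^t\|\nabla u\|_{L^\infty}\,d\tau$ whose constants all grow polynomially in $t$; resolving it, and feeding the outcome back into the critical estimates, produces precisely the stated $\|u\|_{L^1_t(B^1_{\infty,1})}\le C_0 e^{C_0 t^4}$ — the quartic power being the compounding of the linear growth of $\|u\|_{L^2}$, the quadratic growth of $\|\nabla u\|_{L^2_t(L^2)}^2$, and the time–integrations hidden in Propositions~\ref{prop:trans_vishik} and \ref{prop:trans_diff}. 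Once $W(t)<\infty$ is known, the regularity space \eqref{spc:AH2007} follows from the linear theory: $\theta\in C_b(\R_+;B^0_{2,1})$ by Proposition~\ref{prop:trans_vishik}, $u\in C(\R_+;L^2\cap B^{-1}_{\infty,1})\cap L^2_{loc}(\R_+;H^1)\cap L^1_{loc}(\R_+;B^1_{\infty,1})$ by Proposition~\ref{prop:trans_diff} applied to the heat equation for $u$ with source $-u\cdot\nabla u+\theta e_2-\nabla\Pi$, and $\nabla\Pi\in L^1_{loc}(\R_+;B^0_{2,1})$ from the elliptic relation $\Delta\Pi=-\div(u\cdot\nabla u)+\d_2\theta$. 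Uniqueness I would obtain in the usual way, by estimating the difference of two solutions with a one–derivative loss — $\delta u$ in $\widetilde L^\infty_t(B^0_{2,\infty})\cap\widetilde L^1_t(B^2_{2,\infty})$ and $\delta\theta$ in $\widetilde L^\infty_t(B^{-1}_{2,\infty})$, the loss being unavoidable since $\theta$ solves only a transport equation — the estimates closing thanks to $\theta_2\in L^\infty_t(B^0_{2,1})$, $u_2\in L^1_t(B^1_{\infty,1})$ and Gronwall.

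\emph{Main obstacle.} The hard point is the global control of $\int_0^t\|\nabla u\|_{L^\infty}$: the forcing $\d_1\theta$ in the vorticity equation is no better than $L^\infty_t(H^{-1})$, which parabolic smoothing alone cannot upgrade to anything controlling $\|\omega\|_{L^1_t(B^0_{\infty,1})}$, so the substitution $\Gamma=\omega-\mathcal{R}\theta$ is indispensable. Its cost is two technical nuisances that must be dealt with carefully: the low–frequency behaviour of $\mathcal{R}=\d_1(-\Delta)^{-1}$ in dimension two (where $\mathcal{R}$ fails to be $L^2$–bounded near the origin, so one really works with the high–frequency part of $\Gamma$ and uses the $L^2$ bounds for the low frequencies of $u$ and $\omega$), and the endpoint indices in Proposition~\ref{prop:trans_diff} (propagating $\Gamma$ at the low regularity $B^{-2}_{\infty,1}$ is a borderline case, which is precisely why the scheme has to be run in the tilde spaces $\widetilde L^\rho_t(\sC^s)$ rather than the plain $L^\rho_t(\sC^s)$, and why a careful choice of the auxiliary Lebesgue exponent $p_1$ in that proposition is required).
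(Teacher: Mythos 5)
You should first be aware that the paper does not prove this theorem at all: it is quoted verbatim from Abidi--Hmidi \cite{AH2007}, so the relevant comparison is with that proof. Your general skeleton (local theory plus a continuation criterion through $\int_0^t\|\nabla u\|_{L^\infty}$, energy bounds with polynomial growth, the Vishik/Hmidi--Keraani estimate giving $\|\theta\|_{\widetilde L^\infty_t(B^0_{2,1})}\lesssim\|\theta_0\|_{B^0_{2,1}}(1+\int_0^t\|\nabla u\|_{L^\infty})$, smoothing of the heat flow to reach $\nabla u\in L^1_{loc}(L^\infty)$, then Gronwall) is indeed the right one. But your central device, the substitution $\Gamma=\omega-\mathcal{R}\theta$, is imported from the variants of Boussinesq with \emph{zero viscosity} (or critical dissipation), where $\d_1\theta$ genuinely cannot be absorbed; here it is not needed, and your justification for it is mistaken: since $\theta_0\in B^0_{2,1}$ and $\theta$ is transported, the Vishik estimate you invoke gives $\d_1\theta\in L^\infty_{loc}(B^{-1}_{2,1})$ (not merely $L^\infty_t(H^{-1})$), and the heat smoothing applied directly to the vorticity equation yields $\omega\in\widetilde L^1_t(B^1_{2,1})\hookrightarrow L^1_t(B^0_{\infty,1})$, which is exactly why the hypothesis $\theta_0\in B^0_{2,1}$ is made in \cite{AH2007} (and is also how the present paper argues in Section 3 at the level $B^{2/q-2}_{q,1}$).

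The genuine gap, however, is the closure. You assert that one arrives at ``a closed nonlinear Gronwall inequality for $W(t):=\int_0^t\|\nabla u\|_{L^\infty}$ whose constants all grow polynomially in $t$'', but the scheme you set up does not produce that. Propagating $\Gamma$ (or $\omega$) in negative H\"older/Besov norms through Proposition \ref{prop:trans_diff} carries the prefactor $e^{CV_{p_1}(t)}$ with $V_{p_1}(t)\geq\int_0^t\|\nabla u\|_{L^\infty}$, i.e.\ an exponential of the very unknown $W(t)$; treating $u\cdot\nabla\Gamma=\div(u\Gamma)$ as a source instead does not help, since at regularity $\sC^{-2}$ the product again costs a Lipschitz norm of $u$ and Gronwall reproduces $e^{CW}$. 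The resulting inequality has the non-closable form $W(t)\lesssim \mathrm{poly}(t)+(1+t)\,e^{CW(t)}\,(1+W(t))$, from which no global bound, let alone $C_0e^{C_0t^4}$, follows. (The present paper can afford such exponential factors in Section 3 precisely because the quoted theorem already supplies the bound on $W$.) In \cite{AH2007} this is avoided structurally: all convection-sensitive estimates are run in $L^2$-based (energy) settings, where $\int(u\cdot\nabla f)f\,dx=0$ kills the transport term, or against coefficients controlled by energy norms with polynomial growth; the only place $W$ is allowed to enter is linearly, under a time integral, through the Vishik estimate for $\theta$ and the ensuing bound on the forcing $\d_1\theta$, and it is this combination that yields the single exponential $e^{C_0t^4}$. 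To repair your proof you must either drop the $\Gamma$ detour and follow that route, or explain concretely how your propagation of $\Gamma$ avoids any $e^{cW}$ factor; as written it does not. (Secondary point: your commutator bound $\|[\mathcal{R},u\cdot\nabla]\theta\|_{\sC^{-2}}\lesssim\|\nabla u\|_{L^\infty}\|\theta\|_{\sC^{-1}}$ also needs a real proof --- the remainder terms and the low frequencies of $\mathcal{R}$ are not covered by Lemma \ref{lemma:ce1} --- but this is minor compared with the closure issue.)
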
 

We claim that the data $(\theta_0,u_0)$ of Theorem \ref{thm:dXBsnq2} fulfill the assumptions of the above theorem. 
Indeed, decompose $u_0 = \Delta_{-1} u_0 + ({\rm Id}-\Delta_{-1})u_0$ and apply the following \emph{Biot-Savart law} 
 \begin{equation*}
 \nabla u=-\nabla (-\Delta)^{-1}\nabla^\perp\omega\quad\hbox{with}\quad
 \nabla^\perp :=(-\d_2,\d_1).
 \end{equation*}
Then thanks to the obvious embedding $B^{\frac{2}{q}-2}_{q,1}(\R^2) \hookrightarrow B^{-1}_{2,1}(\R^2),$  we obtain that
\begin{align}\label{es:u0_Bsnq2}
\|u_0\|_{B^{-1}_{\infty,1}} \lesssim \|u_0\|_{B^{0}_{2,1}} & \lesssim \| \Delta_{-1} u_0\|_{L^2} + \|({\rm Id}-\Delta_{-1})\nabla u_0\|_{B^{-1}_{2,1}} \nonumber\\
                                                           & \lesssim \|u_0\|_{L^2} + \|\omega_0\|_{B^{\frac{2}{q}-2}_{q,1}}.
\end{align}
Besides, the obvious embedding $\theta_0 \in B^{\frac{2}{q}-1}_{q,1}(\R^2) \hookrightarrow B^{0}_{2,1} (\R^2)$ holds. 
Hence one may apply the above theorem to our data. 
The corresponding global solution $(\theta,u)$ fulfills \eqref{spc:AH2007} and 
the following inequality for all $t\geq0,$
\begin{equation} \label{es:u_lip1_Bsnq2}
\|\nabla u\|_{L^{1}_{t}(L^{\infty})} \lesssim \|u\|_{L^{1}_{t}(B^{1}_{\infty,1})} \leq C_{0} e^{C_{0} t^4}.
\end{equation}


\subsection{A priori estimates for $\theta$ and $\omega$}

We now want to prove that $(\theta,u)$ fulfills the additional property \eqref{spc:thm_dXBsnq2}.
To this end, the first observation is that  $\theta$ satisfies a free transport equation. Hence, from the standard theory of transport equations (apply  Proposition \ref{prop:trans_diff} with $\nu=0$) 
and the bound \eqref{es:u_lip1_Bsnq2}, we deduce that
\begin{equation}\label{es:theta_Bsnq2}
\|\theta\|_{\widetilde{L}^{\infty}_{t}(B^{\frac{2}{q}-1}_{q,1})} \leq e^{C\|\nabla u\|_{L^1_t (L^\infty)}}\|\theta_0\|_{B^{\frac{2}{q}-1}_{q,1}} \leq 
\|\theta_0\|_{B^{\frac{2}{q}-1}_{q,1}}\exp \big( C_0\exp (C_{0} t^4) \big).
\end{equation}
In order to bound $\omega,$ one may apply   Proposition  \ref{prop:trans_diff}
to the vorticity equation, which yields  for all $t\geq0$,
$$
\|\omega\|_{\widetilde{L}^{\infty}_t(B^{\frac{2}{q}-2}_{q,1})} + \|\omega\|_{L^{1}_t(B^{\frac{2}{q}}_{q,1})} \leq C (1 +t)e^{C(1+  t) \|\nabla u\|_{L^1_{t}(L^{\infty})}} \big(\|\omega_0\|_{B^{\frac{2}{q}-2}_{q,1}} + \|\nabla \theta\|_{L^{1}_{t} (B^{\frac{2}{q}-2}_{q,1})} \big)\cdotp
$$
Hence, taking advantage of \eqref{es:u_lip1_Bsnq2} and \eqref{es:theta_Bsnq2}, we get
\begin{equation}\label{es:vorticity_Bsnq2}
\|\omega\|_{L^{\infty}_t(B^{\frac{2}{q}-2}_{q,1})} + \|\omega\|_{L^{1}_t(B^{\frac{2}{q}}_{q,1})} \leq C\exp(\exp(C_0t^4))
 \big(\|\omega_0\|_{B^{\frac{2}{q}-2}_{q,1}} + \|\theta_0\|_{B^{\frac{2}{q}-1}_{q,1}} \big).
\end{equation}
Then   Biot-Savart law allows to improve the regularity of the velocity $u$ as follows:  
\begin{equation}\label{es:u_lip2_Bsnq2}
 U_q(t):=\|\nabla u\|_{L^{1}_{t}(B^{\frac{2}{q}}_{q,1})} \lesssim \|\omega\|_{L^1_{t}(B^{\frac{2}{q}}_{q,1})}  \leq C_{0} \exp \big( \exp (C_{0} t^4) \big),
\end{equation}
where $C_0$ depends only on the  Lebesgue and Besov norms of the  data in Theorem \ref{thm:dXBsnq2}.


\subsection{A priori estimates for the striated regularity}

We shall need the  following lemma which is a straightforward generalization of Inequality \eqref{es:dXu_divXomega1}. 
\begin{lemma}
For any $\ep \in ]0,1[$, there exists a constant $C$ such that the following estimate holds true:
\begin{equation} \label{es:dXu_divXomega2}
\|\d_{X}u\|_{\widetilde{L}^{1}_t (\sC^{\ep})} \lesssim \int_{0}^{t}\|\nabla u\|_{L^{\infty}} \|X\|_{\sC^{\ep}}\, dt' + \|\div(X\omega)\|_{\widetilde{L}^{1}_t (\sC^{\ep-1})}.
\end{equation}
\end{lemma}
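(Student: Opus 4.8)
The plan is to start from the decomposition of $\d_X u$ obtained via Bony's paraproduct formula applied componentwise, $\d_X u = X^k\d_k u = T_{X^k}\d_k u + T_{\d_k u}X^k + R(X^k,\d_k u)$, and to estimate each of the three pieces in $\widetilde L^1_t(\sC^\ep)$. The term $T_{\d_k u}X^k$ and the remainder $R(X^k,\d_k u)$ are harmless: standard continuity of the paraproduct and of the remainder (Chap. 2 of \cite{BCD2011}) give a bound by $\|\nabla u\|_{L^\infty}\|X\|_{\sC^\ep}$ after time integration, because $\ep>0$ makes the remainder converge and $T_{\d_k u}$ maps $\sC^\ep$ to $\sC^\ep$ with norm $\lesssim\|\nabla u\|_{L^\infty}$. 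So the whole difficulty is concentrated in $T_{X^k}\d_k u$, the "bad" paraproduct where the rough factor $\nabla u$ sits in high frequency.

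For that term I would rewrite $T_{X^k}\d_k u=\d_k T_{X^k}u-T_{\d_k X^k}u$, use that $\d_k X^k=\div X$ so the second piece is $-T_{\div X}u$, which is controlled in $\sC^\ep$ by $\|\div X\|_{\sC^{\ep-1}}\|u\|_{L^\infty}$ (and one would need $\div X\in\sC^{\ep-1}$, which follows from $X\in\sC^\ep$; alternatively absorb it into the $\|\nabla u\|_{L^\infty}\|X\|_{\sC^\ep}$ term using $L^2$–type control on $u$, or observe it is lower order). The genuinely delicate point is $\d_k T_{X^k}u$: one wants to replace $T_{X^k}\d_k u$ by $\div(T_{X^k}u)$ up to acceptable errors, and ultimately to compare it with $\div(X\omega)$. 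The mechanism here is exactly the one behind \eqref{es:dXu_divXomega1}: writing $u=-\nabla(-\Delta)^{-1}\nabla^\perp\omega$ via Biot–Savart, one expresses $\d_X u$ modulo a $T_{X}$-type paraproduct against $\omega$ and modulo commutators, and the combination that appears naturally is precisely $\div(X\omega)$ measured in $\sC^{\ep-1}$, the remaining commutators being again bounded by $\|\nabla u\|_{L^\infty}\|X\|_{\sC^\ep}$. Concretely, the clean way is: this pointwise-in-time inequality \eqref{es:dXu_divXomega1} is already quoted from \cite{BCD2011}; the only thing to do is to upgrade it to the $\widetilde L^1_t$ setting.

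Hence the actual proof I would write is short. For each fixed dyadic block $j$, \eqref{es:dXu_divXomega1} at the level of $\|\Delta_j(\d_X u)\|_{L^\infty}$ reads (tracking the proof in \cite{BCD2011})
\begin{equation*}
2^{j\ep}\|\Delta_j(\d_X u)\|_{L^\infty}\lesssim c_j\Big(\|\nabla u\|_{L^\infty}\|X\|_{\sC^\ep}+\|\div(X\omega)\|_{\sC^{\ep-1}}\Big),
\end{equation*}
with $(c_j)_{j\ge-1}$ a summable sequence, uniformly in time. Integrating in time on $[0,t]$, taking the $L^1_t$ norm \emph{before} the $\ell^1$ sum over $j$ — which is exactly what the tilde norm $\widetilde L^1_t(\sC^\ep)$ prescribes — and using $\sum_j c_j\lesssim1$ yields
\begin{equation*}
\|\d_X u\|_{\widetilde L^1_t(\sC^\ep)}\lesssim\int_0^t\|\nabla u\|_{L^\infty}\|X\|_{\sC^\ep}\,dt'+\|\div(X\omega)\|_{\widetilde L^1_t(\sC^{\ep-1})},
\end{equation*}
which is \eqref{es:dXu_divXomega2}.

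\textbf{Main obstacle.} The only real subtlety, and the place one must be careful, is making sure that the per-block constants $c_j$ in the dyadic version of \eqref{es:dXu_divXomega1} are genuinely summable and time-independent, so that the operations "integrate in $t$" and "sum in $j$" commute in the right order for the tilde space; this is where the restriction $\ep\in]0,1[$ (positivity of $\ep$ for the remainder, $\ep<1$ for the paraproduct $T_{X^k}\d_k u$ to land in $\sC^\ep$ without loss) is used. Everything else is a routine transcription of known paradifferential estimates, so I expect the write-up to be only a few lines, essentially a dyadic-then-time-integration version of the cited inequality.
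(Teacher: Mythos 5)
Your overall strategy --- redo the proof of the stationary inequality \eqref{es:dXu_divXomega1} block by block in frequency and integrate in time \emph{before} taking the norm over $j$ --- is exactly the ``straightforward generalization'' the paper has in mind (it gives no separate proof of this lemma), and your preliminary Bony/Biot--Savart discussion identifies the right mechanism. However, the blockwise inequality you actually display does not deliver the stated estimate. A minor point first: since $\sC^\ep=B^\ep_{\infty,\infty}$, the outer norm in $j$ is a supremum, not an $\ell^1$ sum, so requiring summable $c_j$ is both unnecessary and, for the term $\|\nabla u\|_{L^\infty}\|X\|_{\sC^\ep}$, not available (it would amount to a $B^\ep_{\infty,1}$ bound on $\d_Xu$, which is not what is claimed). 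The real gap is that you place the \emph{full} norm $\|\div(X\omega)\|_{\sC^{\ep-1}}$ inside the blockwise bound; after time integration this only yields $\|\div(X\omega)\|_{L^1_t(\sC^{\ep-1})}$ on the right-hand side. Since $\|\cdot\|_{\widetilde L^1_t(\sC^{\ep-1})}\le\|\cdot\|_{L^1_t(\sC^{\ep-1})}$, that is a strictly weaker statement than \eqref{es:dXu_divXomega2}, and the tilde refinement is precisely the point of the lemma: the parabolic estimate \eqref{es:divXw_Bsnq2} controls only $\|\div(X\omega)\|_{\widetilde L^1_t(\sC^{\ep-1})}$, so your weaker version would not chain with it.

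The fix is what you hint at but must make explicit. Tracking the proof of \eqref{es:dXu_divXomega1}, the contribution of $\div(X\omega)$ to $\d_Xu$ enters through a Fourier multiplier of order $-1$ supported away from the origin, hence it is almost diagonal in frequency: for $j\geq 0$ one gets
\begin{equation*}
2^{j\ep}\|\Delta_j(\d_Xu)(t)\|_{L^{\infty}}\lesssim \|\nabla u(t)\|_{L^{\infty}}\|X(t)\|_{\sC^{\ep}}+2^{j(\ep-1)}\|\tDelta_j\div(X\omega)(t)\|_{L^{\infty}},
\end{equation*}
with $\tDelta_j$ a slightly enlarged block, while the block $j=-1$ is bounded directly by $\|X(t)\|_{L^\infty}\|\nabla u(t)\|_{L^\infty}$ since $\d_Xu=X\cdot\nabla u$. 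Integrating each block in time, taking the supremum over $j$, and using the embedding $L^1_t(\sC^{\ep})\hookrightarrow\widetilde L^1_t(\sC^{\ep})$ (Minkowski) for the first term then gives exactly \eqref{es:dXu_divXomega2}. With this correction your argument is complete and coincides with the intended proof; your alternative manipulation $T_{X^k}\d_k u=\d_kT_{X^k}u-T_{\div X}u$ is indeed problematic (it costs $\|u\|_{L^\infty}$ and extra regularity of $\div X$), and you are right to discard it in favour of the Biot--Savart route.
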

As we explained in the introduction, the above lemma implies that it suffices to bound  $\div(X\omega)$ in $\widetilde{L}^{1}_t (\sC^{\ep-1})$ for all $t\geq0$ in order to  propagate the H\"{o}lder regularity $\sC^{\ep}$ of $X.$  This will be based on Proposition \ref{prop:trans_diff}, remembering that  $\div(X\omega)$ fulfills the  transport-diffusion equation:  
$$
\d_t\div(X\omega)+u\cdot\nabla\div(X\omega)-\Delta\div(X\omega)=f
$$
with 
$$
f= \div F+\div(X\d_1\theta)\quad\hbox{and}\quad
F := X\Delta \omega - \Delta(X\omega).$$
Thanks to Proposition \ref{prop:trans_diff}, we have 
\begin{multline}\label{es:divXw_Bsnq2}
\|\div (X\omega)\|_{L^{\infty}_{t}(\sC^{\ep-3})}
+\|\div (X\omega)\|_{\widetilde{L}^{1}_{t}(\sC^{\ep-1})} \lesssim  (1+ t)
\Bigl(\|\div (X_0 \omega_0)\|_{\sC^{\ep-3}} \\+ \int_{0}^{t} U'_{q}\, \|\div(X\omega)\|_{\sC^{\ep-3}}\,dt' + \|f\|_{\widetilde{L}^{1}_{t}(\sC^{\ep-3})}\Bigr),
\end{multline}
where $U_q$ has been defined in \eqref{es:u_lip2_Bsnq2}.
\medbreak
Let us now  bound the source term $f$ in $\widetilde{L}^{1}_{t}(\sC^{\ep-3})$ of  \eqref{es:divXw_Bsnq2}. It is not hard to  estimate  $F$ via the following decomposition:
\begin{equation*}
F = [T_X,\Delta] \omega +T_{\Delta \omega} X + R(X,\Delta \omega)-\Delta T_{\omega} X -\Delta R(\omega, X).
\end{equation*}
Using the commutator estimates of Lemma \ref{lemma:ce1} and 
standard results of continuity for the paraproduct and remainder operators (see e.g. \cite{BCD2011}, Chap. 2), we get
if  Condition \eqref{cdt:striated_Bsnq2} is fulfilled,
\begin{equation}\label{es:F1_dBsnq2}
\|F\|_{\sC^{\ep-2}} \lesssim \|\omega\|_{B^{\frac{2}{q}}_{q,1}} \|X\|_{\sC^{\ep}}.
\end{equation}
The last  part of $f$ may be decomposed into
 $$\div (X \d_1 \theta) = \d_1 (\div (X \theta))- \div (\theta \d_1 X ).$$
 By Bony's decomposition \eqref{eq:bony}, $$\theta \d_1 X = T_{\theta} \d_1 X + T_{\d_1 X} \theta + R(\theta, \d_1 X).$$ As Condition \eqref{cdt:striated_Bsnq2} is fulfilled, we thus  have 
\begin{equation} \label{es:F2_dBsnq2}
\|\theta \d_1X \|_{\sC^{\ep-2}} \lesssim \|\theta\|_{B^{\frac{2}{q}-1}_{q,1}}  \|\d_1 X\|_{\sC^{\ep-1}}.
\end{equation}
Next, we see that
$$
\div(X\theta)=\d_X\theta+\theta\,\div X.
$$
The last term $\theta\,\div X$ may be bounded as in \eqref{es:F2_dBsnq2}. 
Therefore, combining with  Inequalities \eqref{es:F1_dBsnq2} and \eqref{es:F2_dBsnq2}, integrating with respect to time, and using also  the obvious embedding $ L^{1}_{t}(\sC^{\ep-3})\hookrightarrow \widetilde{L}^{1}_t(\sC^{\ep-3})$, we end up with 
\begin{equation} \label{es:f_dBsnq2}
 \|f\|_{\widetilde{L}_t^{1}(\sC^{\ep-3})}  \lesssim\int^t_0 (\|\omega\|_{B^{\frac{2}{q}}_{q,1}}  + \|\theta\|_{B^{\frac{2}{q}-1}_{q,1}}) \|X\|_{\sC^{\ep}}\, dt' +  \|\d_X \theta \|_{L^{1}_t (\sC^{\ep-2})}.
\end{equation}
Next, resuming to the transport equation \eqref{eq:X} satisfied by $X,$  combining with the last
item of Proposition \ref{prop:trans_diff},  and using also    \eqref{es:dXu_divXomega2} and  \eqref{es:divXw_Bsnq2}, we get 
\begin{align}\label{es:X_Bsnq2}
\|X\|_{L^{\infty}_{t}(\sC^{\ep})} & \leq \|X_0\|_{\sC^{\ep}} + C\int_{0}^{t} \|\nabla u\|_{L^{\infty}} \|X\|_{\sC^{\ep}}\,dt' + C\|\d_X u\|_{\widetilde{L}^{1}_{t}(\sC^{\ep})} \nonumber \\
& \leq \|X_0\|_{\sC^{\ep}} + C\int_{0}^{t} \|\nabla u\|_{L^{\infty}} \|X\|_{\sC^{\ep}}\,dt' + C\|\div (X\omega)\|_{\widetilde{L}^{1}_{t}(\sC^{\ep-1})} \nonumber \\
 & \leq   \|X_0\|_{\sC^{\ep}}+ C(1 + t)\|\div (X_0 \omega_0)\|_{\sC^{\ep-3}} \nonumber \\
&\hspace{2cm} + C(1+t) \Big( \int_{0}^{t} U'_{q}  \big(\|X\|_{\sC^{\ep}}+ \|\div(X \omega)\|_{\sC^{\ep-3}} \big)\,dt' + \|f\|_{\widetilde{L}^{1}_{t}(\sC^{\ep-3})} \Big).
\end{align}
Set \begin{equation*}
Z(t):= \|X\|_{L^{\infty}_{t}(\sC^{\ep})} + \|\div(X \omega)\|_{L^{\infty}_{t}(\sC^{\ep-3})}
\end{equation*} 
and 
\begin{equation*}
W(t) := U_q(t) + \int^{t}_{0} (\|\omega\|_{B^{\frac{2}{q}}_{q,1}}+ \|\theta\|_{B^{\frac{2}{q}-1}_{q,1}})\, dt'.
\end{equation*} 
Observe that  the bounds \eqref{es:theta_Bsnq2}, \eqref{es:vorticity_Bsnq2} and \eqref{es:u_lip2_Bsnq2} imply that
\begin{equation}\label{es:W_dBsnq2}
W(t) \leq C_0 \exp\bigl(\exp(C_0 t^4)\bigr) \big(\|\omega_0\|_{B^{\frac{2}{q}-2}_{q,1}} + \|\theta_0\|_{B^{\frac{2}{q}-1}_{q,1}} \big).
\end{equation}
Then putting together  \eqref{es:divXw_Bsnq2}, \eqref{es:f_dBsnq2} and \eqref{es:X_Bsnq2} yields
\begin{align*}
Z(t) \lesssim (1 + t) \bigg( Z(0) + \|\d_X \theta\|_{L^{1}_{t} (\sC^{\ep-2})} + \int_{0}^{t} W'(t') Z(t')\,dt' \bigg)\cdotp
\end{align*}
Hence, by virtue of  Gronwall lemma,
\begin{equation}\label{es:Z1_dBsnq2}
Z(t)  \leq C (1 + t) (Z(0) + \|\d_X \theta\|_{L^{1}_{t} (\sC^{\ep-2})} ) e^{C(1+ t)W(t)}.  
\end{equation}
Now, Proposition \ref{prop:trans_diff} and the fact that $\d_X\theta$ satisfies a free transport equation imply that
\begin{equation}\label{es:dXtheta_dBsnq2}
\|\d_X \theta\|_{L^{\infty}_{t}(\sC^{\ep-2})} \leq e^{ C U_q(t) }  \|\d_{X_0} \theta_0\|_{\sC^{\ep-2}}.
\end{equation}
Adding \eqref{es:W_dBsnq2} and \eqref{es:dXtheta_dBsnq2} to \eqref{es:Z1_dBsnq2},  we thus obtain
\begin{equation}\label{es:Z_dBsnq2}
Z(t) \leq C_{0} \exp \Big(\exp \big(\exp ( C_{0}t^4)\big) \Big)\cdot
\end{equation}
Resuming to \eqref{es:divXw_Bsnq2}, one can eventually conclude that 
\begin{equation}\label{es:striated}
\|\div (X\omega)\|_{\widetilde{L}^{1}_{t}(\sC^{\ep-1})} + \|\div (X\omega)\|_{{L}^{\infty}_{t}(\sC^{\ep-3})} 
 \leq C_{0} \exp \Big(\exp \big(\exp ( C_{0}t^4)\big) \Big)\cdot
 \end{equation}


\subsection{Completing the proof of Theorem \ref{thm:dXBsnq2}}\label{sect:complt_thm_dXBsnq2}

In order to make the previous computations rigorous, we have to work on smooth solutions. To this end,  we  solve  System $(B_{1,2})$ with smoothed out initial data 
\begin{equation*}
(\theta^n_0, u^n_0) := (S_n \theta_0, S_n u_0).
\end{equation*}
It is clear by embedding and \eqref{es:u0_Bsnq2} that the components of $(\theta_0,u_0)$ belong to all Sobolev spaces $H^s(\R^2)$. Thanks to the result in \cite{Chae2006}, we thus get  a unique global smooth solution
$(\theta^n,u^n,\nabla\Pi^n)$  having Sobolev regularity  of any order.
Furthermore, as $\theta^n_0$ belongs to all spaces $B^s_{q,1}$
and satisfies a linear transport equation with a smooth velocity field,  we are guaranteed that $\theta^n\in C(\R_+; B^s_{q,1})$ for all $s\in\R.$  
Then as $\omega^n$ is the solution of the transport-diffusion equation with $\omega^n_0\in B^s_{q,1}(\R^2)$ and source term $\d_1\theta^n$ in $C\big(\R_+; B^s_{q,1}(\R^2)\big)$ for all $s\in\R,$ we conclude that $(\theta^n,\omega^n)$ belongs to all sets  $E^s_{q}$ defined by 
$$\displaylines{ E^s_q:= \Bigl\{(\vartheta,\sigma): \vartheta \in {L}^{\infty}_{loc}(\R_{+}; B^{\frac{2}{q}-1 +s}_{q,1}),\  \sigma \in  \widetilde{L}^\infty_{loc}(\R_{+};B^{\frac{2}{q}-2+s}_{q,1}) \cap L^1_{loc}(\R_{+};B^{\frac{2}{q}+s}_{q,1}) \Bigr\}\cdotp}$$
Finally, regularizing $X_0$ into $X_0^n:=S_{n}X_0$ and setting  $ X^n(t,x) := (\d_{X_0^n} \psi_{u^n})(\psi^{-1}_{u^n}(t,x)),$  we see that $X^n$ belongs to H\"older spaces of any order, and satisfies \eqref{eq:X} with velocity field $u^n.$ The estimates that we proved so far are thus valid  for $(\theta^n,u^n, X^n).$
In particular,  Inequalities  \eqref{es:theta_Bsnq2}, \eqref{es:W_dBsnq2} and \eqref{es:Z_dBsnq2} are satisfied for all $n\in\N,$
\emph{with their r.h.s. depending on $n$ through the (regularized) initial data respectively}. 
 \medbreak
Deducing from  \eqref{es:theta_Bsnq2} and  \eqref{es:W_dBsnq2} that  the sequence  $(\theta^n, \omega^n)_{n\in\N}$ is bounded in $E^0_q$ is obvious as $S_n$ maps $L^p$ to itself for all $n\in\N,$ with a norm independent of $n.$
This guarantees that  
\begin{equation}\label{es:unif_dBnsq2}
\|\theta_0^n\|_{B^{\frac{2}{q}-1}_{q,1}} \lesssim \|\theta_0\|_{B^{\frac{2}{q}-1}_{q,1}},\quad 
\|u^n_0\|_{L^2} \lesssim \|u_0\|_{L^2} \quad \hbox{and} \quad
\|\omega^n_0\|_{B^{\frac{2}{q}-2}_{q,1}} \lesssim  \|\omega_0\|_{B^{\frac{2}{q}-2}_{q,1}}.  
\end{equation}
To justify the uniform boundedness of $(X^n, \d_{X^n} \theta^n, \div(X^n\omega^n))$ in
\begin{equation*}
\big(L^\infty_{loc}(\R_+; \sC^\ep) \big)^2 \times L^\infty_{loc} (\R_{+}; \sC^{\ep-2}) \times \big(L^\infty_{loc}(\R_{+}; \sC^{\ep-3}) \cap \widetilde{L}^{1}_{loc}(\R_{+}; \sC^{\ep-1})\big),
\end{equation*}
it is only a matter of checking  that the `constant' $C^n_0$ that appears in the r.h.s. of \eqref{es:dXtheta_dBsnq2} and \eqref{es:Z_dBsnq2} could be bounded independently of $n.$ 
In fact, besides the norms appearing in  \eqref{es:unif_dBnsq2}, 
 $C^n_0$  depends only (continuously) on
\begin{equation*}
\|X_0^n\|_{\sC^{\ep}},\quad
\|\d_{X_0^n} \theta^n_0\|_{\sC^{\ep-2}} \quad \hbox{and} \quad
\|\div(X^n_0\omega_0^n )\|_{\sC^{\ep-3}}.  
\end{equation*}
Arguing as in \eqref{es:unif_dBnsq2}, we see that $\|X_0^n\|_{\sC^{\ep}}$ can be uniformly controlled by $\|X_0\|_{\sC^{\ep}}.$ 
 Furthermore, combining Lemma \ref{lemma:ce3} and Lemma 2.97 in \cite{BCD2011}, we get
$$\|\d_{X_0^n} \theta^n_0 \|_{\sC^{\ep-2}} \lesssim \|\d_{X_0} \theta_0\|_{\sC^{\ep-2}} + \|\theta_0\|_{B^{\frac{2}{q}-1}_{q,1}}\|X_0\|_{\sC^\ep}.$$
Finally, we claim that
$$ \|\div(X^n_0  \omega_0^n )\|_{\sC^{\ep-3}} \lesssim \|\div(X_0 \omega_0 )\|_{\sC^{\ep-3}} 
+ \|\omega_0\|_{B^{\frac{2}{q}-2}_{q,1}}\|X_0\|_{\sC^\ep}.$$
This is a consequence of the  decomposition
$$\div (Yf)-\cT_{Y}f = \div (T_{f} Y + R(f,Y))+[\d_k, T_{Y^k}]f,$$
applied to  $(Y,f) = (X^n_0,\omega^n_0) \mbox{ or } (X_0,\omega_0),$ 
and of the fact that 
$$\cT_{X_0^n}\omega_0^n = S_n \cT_{X_0}\omega_0+[T_{(X_0^n)^k},S_n]\d_k\omega_0 + \cT_{(X^n_0-X_0)} \omega^n_0,$$
where the last term $\cT_{(X^n_0-X_0)} \omega^n_0$ vanishes if $N_0$ in \eqref{eq:bony} is taken larger than $1$. 
\medbreak

Let us now establish that $(\theta^n,u^n,\nabla\Pi^n)$ converges (strongly) 
to some solution $(\theta,u,\nabla\Pi)$ of $(B_{1,2})$ belonging to the space  $F_2^0(\R^2)$
where, for all $s\in\R,$ $p \in [1,\infty]$ and $N\geq 2,$ we set
$$\displaylines{F^s_p(\R^N):=\Bigl\{(\vartheta,v,\nabla P): \vartheta \in \widetilde{L}^{\infty}_{loc}(\R_{+}; B^{\frac{N}{p}-1+s}_{p,1}(\R^N)), \nabla P \in \Big(L^1_{loc}\bigl(\R_{+};B^{\frac{N}{p}-1+s}_{p,1}(\R^N)\bigr)\Big)^N 
\hfill\cr\hfill\hbox{and}\quad  v\in\Big(\widetilde{L}^{\infty}_{loc}\big(\R_{+};B^{\frac{N}{p}-1+s}_{p,1}(\R^N)\big) \cap L^1_{loc}\big(\R_{+};B^{\frac{N}{p}+1+s}_{p,1}(\R^N)\big) \Big)^N \Bigr\}\cdotp}$$

To this end, we shall first prove  that $(\theta^n,u^n,\nabla\Pi^n)_{n\in\N}$ is a Cauchy sequence in the space $F^{-1}_2(\R^2).$ Indeed, if setting 
 $$(\delta \theta^m_n, \delta u^m_n, \delta \Pi^m_n) := (\theta^m-\theta^n, u^m-u^n, \Pi^m-\Pi^n),$$
 then we get from $(B_{1,N})$ that 
\begin{equation}\tag{$B^{m,n}_{1,N}$}\label{eq:BsnqmnN}
\left\{
\begin{array}{l}
\d_t \delta \theta^m_n +\div(u^m \delta\theta^m_n) = -\div(\delta u^m_n\; \theta^n), \\[1ex]
\d_t \delta u^m_n +\div(u^m \otimes \delta u^m_n)- \Delta \delta u^m_n 
+ \nabla \delta\Pi^m_n = \delta\theta^m_n\, e_N - \div(\delta u^m_n \otimes u^n),\\[1ex]
\div \delta u^m_n = 0.
\end{array}
\right.
\end{equation}
In dimension  $N=2,$  we infer from  Bony decomposition \eqref{eq:bony} and continuity results
for the paraproduct and remainder,   that 
\begin{equation*}
\|\div(\delta\! u^m_n  \theta^n)\|_{B^{-1}_{2,1}}  \lesssim \|\delta\! u^m_n \|_{B^{1}_{2,1}} \|\theta^n\|_{B^{0}_{2,1}} ~~\mbox{and  }~~\|\div(\delta\! u^m_n\otimes u^n)\|_{B^{-1}_{2,1}}  \lesssim \|\delta\! u^m_n\|_{B^{-1}_{2,1}} \|u^n\|_{B^{2}_{2,1}}. 
\end{equation*}
Using the above two inequalities and the estimates for the transport and transport-diffusion equations 
stated in Proposition \ref{prop:trans_diff}, we end up with 
$$
\begin{aligned}
&\| \delta \theta^m_n \|_{\widetilde{L}^\infty_t (B^{-1}_{2,1})} \leq  e^{C\|\nabla u^{m}\|_{L^1_t(B^{1}_{2,1})}}
  \bigg( \|\delta \theta^m_n(0) \|_{B^{-1}_{2,1}} + \int^t_0 \|\delta u^m_n \|_{B^{1}_{2,1}} \|\theta^n\|_{B^{0}_{2,1}} dt'\bigg),\\
&\sE^m_n(t) \lesssim (1+t^2) e^{C(1+t)\|\nabla u^{m}\|_{L^1_t(B^{1}_{2,1})}} \bigg( \|\delta u^m_n(0) \|_{B^{-1}_{2,1}} +  \|\delta \theta^m_n(0)\|_{B^{-1}_{2,1}} \\ 
&\hspace{7cm}+ \int^t_0 \big( \|u^n\|_{B^{1}_{2,1}}  + \|\theta^n\|_{B^{0}_{2,1}}  \big) \sE^m_n(t')\,dt' \bigg),
\end{aligned}
$$
where $\sE^m_n(t):= \| \delta u^m_n\|_{\widetilde{L}^{\infty}_t(B^{-1}_{2,1})}+\| \delta u^m_n\|_{L^{1}_t(B^{1}_{2,1})}.$  
\medbreak

Combining Gronwall lemma with the uniform bounds that we proved for $(\theta^n,u^n),$ we deduce that 
 $(\theta^n,u^n,\nabla\Pi^n)_{n\in\N}$  strongly converges in the norm of $\|\cdot\|_{F^{-1}_2(\R^2)}.$
 Then interpolating with the uniform bounds, we see that strong convergence holds true in  $F^{-\eta}_2(\R^2),$ too,
 for all $\eta>0,$ which allows to justify that $(\theta,u,\nabla\Pi)$ satisfies $(B_{1,2}).$ Taking advantage of  Fatou property of Besov spaces, we gather that  the limit  $ (\theta,u,\nabla\Pi)$ belongs to 
 $F^{0}_2(\R^2).$    In addition, as the sequence $(\theta^n, \omega^n)$ is  bounded in $E^0_q$, 
 Fatou property also implies that  $(\theta, \omega)$ belongs to $E^0_q.$ Finally,  using (complex) interpolation, we obtain that for any $0<\eta,\delta<1$, sequence $(\theta^n, \omega^n)_{n\in\N}$ converges to $(\theta, \omega)$ in the space
$E^{-\delta \eta}_{q_\delta}$ with ${q_\delta}:= \frac{\delta}{2} + \frac{1-\delta}{q}.$
\medbreak

To complete the proof of Theorem \ref{thm:dXBsnq2}, we have to check that the announced proprieties 
of striated regularity are fulfilled. 
In fact, taking advantage of the (uniform) Lipschitz-continuity of  $u^n$, we may obtain that for all  $\eta >0$ (see \cite{Che1998}),
\begin{equation}\label{cv:X_dXBsnq2}
X = \lim_{n \rightarrow \infty} X^n \mbox{  in  } L^{\infty}_{loc}(\R_+;\sC^{\ep-\eta}),
\end{equation}  
which allows to justify  that $X$ satisfies \eqref{eq:X}, and also 
that for all ${\eta}'>0,$  Sequence $\big(\d_{X^n} \theta^n, \div (X^n \omega^n) \big)_{n\in\N}$ 
converges in the space  $$L^{\infty}_{loc}(\R_{+}; \sC^{\ep-2-\eta'}) \times  {L}^{1}_{loc}(\R_{+}; \sC^{\ep-1-\eta'}).$$
As we know in addition that  \eqref{es:dXtheta_dBsnq2} and \eqref{es:striated} are  fulfilled by $\d_{X^n}\theta^n$ and
$\div(X^n\omega^n)$ for all $n\in\N,$ Fatou property 
enables us to conclude that we have 
$$\d_{X} \theta\in L^\infty_{loc}(\R_+;\sC^{\ep-2})\quad\hbox{and}\quad
\div(X\omega)\in L^{\infty}_{loc}(\R_{+}; \sC^{\ep-3}) \times  \wt{L}^{1}_{loc}(\R_{+}; \sC^{\ep-1}),
$$
and that both \eqref{es:dXtheta_dBsnq2} and \eqref{es:striated} hold true.


\section{Propagation of striated regularity in  the general case $N \geq 3$}\label{sect:thm_dXBsnqN}

This section is dedicated to the proof of Theorem \ref{thm:dXBsnqN}. The first (easy) step is to  extend the global existence result of R. Danchin and M. Paicu in  \cite{DanP2008a}, to nonhomogeneous Besov spaces. 
The second  step is to propagate striated regularity. As pointed out in the introduction, it suffices to bound $\d_Xu$ in the space $\widetilde{L}^1_t(\sC^\ep),$ which requires our using the smoothing properties of the heat flow.  
By applying the directional derivative $\d_X$ to the velocity equation, we discover
that $\d_Xu$ may be seen as the solution to some evolutionary Stokes system with a \emph{nonhomogeneous}
divergence condition, namely (using the fact that  $\div u=0$), 
$$ \div (\d_X u)= \div \big( \d_u X -u\, \div X \big).$$ 
In order to reduce our study to that of a solution to the standard Stokes system, it is thus natural 
to decompose $\d_Xu$ into
$$\d_Xu=z+\d_uX-u\,\div X.$$
Now, $z$ is indeed divergence free. Unfortunately, the above decomposition 
introduces additional source terms in the Stokes system satisfied by $z.$ One of those terms is  $\Delta\d_u X,$ that
we do not know how to handle without losing regularity. Moreover, this also leads to the failure of global estimates technically.

 To overcome this difficulty,  we here propose to replace the directional derivative $\d_X$ by the \emph{para-vector field} $\cT_X$ that is defined by 
   \begin{equation*}
 \cT_X u:=  T_{X^k} \d_k u.  
 \end{equation*}
 The reason why this change of viewpoint is appropriate is that, as we shall see below,  the aforementioned loss
 of regularity does not occur anymore when estimating  $\cT_X u,$ and that  
 $\cT_X u$ may be seen of  the leading order part  of $\d_X u,$ 
 as shown by the following  two inequalities (that hold true whenever $\frac Np+\ep-1 \geq 0$, see  Lemma \ref{lemma:ce3}): 
\begin{align}\label{es:dXu_TXu_BsnqN1}
\|\d_Xu-\cT_Xu\|_{L^\infty_t(\sC^{\ep-2})} &\lesssim \|X\|_{L^\infty_t(\wt\sC^\ep)}\|u\|_{L^\infty_t( B^{\frac Np-1}_{p,1})},\\\label{es:dXu_TXu_BsnqN2}
\|\d_Xu-\cT_Xu\|_{L^1_t(\sC^{\ep})} &\lesssim \|X\|_{L^\infty_t(\sC^\ep)}\|u\|_{L^1_t(B^{\frac Np+1}_{p,1})}.
\end{align}
Thus the second step of the proof of Theorem \ref{thm:dXBsnqN} will be mainly devoted
to  proving a priori  estimates for $\cT_Xu$ in $\big( L^\infty_t(\sC^{\ep-2})\cap \wt L^1_t(\sC^{\ep})\big)^N.$
\medbreak
Finally, in the last step of the proof, we smooth out the data (as in the 2-D case) so as
to justify that the a priori estimates of the first two steps are indeed satisfied.

\subsection{Global existence  in nonhomogeneous Besov spaces}
The following result is an obvious modification of the work by the first author and M. Paicu in  \cite{DanP2008a}. 
\begin{thm}\label{thm:wellposed_BsnqN}
Let  $N \geq 3$ and $p \in ]N, \infty[$. Assume that $\theta_0 \in B^{0}_{N,1}(\R^N) \cap L^{\frac{N}{3}}(\R^N)$ and that the initial divergence-free velocity  field $u_0$ is in $(B^{\frac{N}{p}-1}_{p,1}(\R^N) \cap L^{N,\infty}(\R^N))^N$. There exists a (small) positive constant $c$ independent of $p$ such that if
\begin{equation}\label{cdt:small_BsnqN}
\|u_0\|_{L^{N,\infty}} + \nu^{-1} \|\theta_0\|_{L^{\frac{N}{3}}} \leq c\nu,
\end{equation}
then the Boussinesq system \eqref{eq:BsnqN} has a unique global solution 
\begin{equation*}
(\theta, u, \nabla \Pi) \in C(\R_{+};B^{0}_{N,1}) \cap \big( C(\R_{+};B^{\frac{N}{p}-1}_{p,1}) \cap L^1_{loc}(\R_{+};B^{\frac{N}{p}+1}_{p,1})\big)^N \cap \big( L^1_{loc}(\R_{+};B^{\frac{N}{p}-1}_{p,1}) \big)^N.
\end{equation*}
Moreover, there exists some positive constant $C$ independent on $N$ such that for any $t\geq0$, we have 
\begin{equation}\label{es:lorentz_BsnqN}
\|\theta(t)\|_{L^{\frac{N}{3}}} = \|\theta_0\|_{L^{\frac{N}{3}}},\quad \|u(t)\|_{L^{N,\infty}} \leq C ( \|u_0\|_{L^{N,\infty}} + \nu^{-1} \|\theta_0\|_{L^{\frac{N}{3}}} ),
\end{equation}
\begin{equation}\label{es:Up_BsnqN}
U_{p}(t) \leq A(t), 
\end{equation}

\begin{equation}\label{es:theta_BsnqN}
\|\theta\|_{\widetilde{L}^{\infty}_t (B^{0}_{N,1})} \leq C \|\theta_0\|_{B^{0}_{N,1}}(1+\nu^{-1}A(t)),
\end{equation}

\begin{equation}\label{es:uPi_BsnqN}
\|(\d_t u, \nabla \Pi)\|_{L^{1}_t (B^{\frac{N}{p}-1}_{p,1})} \leq C \nu^{-1} A^2(t)+ Ct\|\theta_0\|_{B^{0}_{N,1}}\big( 1+A(t) \big),
\end{equation}
where  

\begin{align*}
U_p(t) & := \|u\|_{\widetilde{L}^{\infty}_t (B^{\frac{N}{p}-1}_{p,1})} + \nu \|u\|_{L^{1}_t (B^{\frac{N}{p}+1}_{p,1})}\quad\hbox{and}\\
A(t) & := C (\|u_0\|_{B^{\frac{N}{p}-1}_{p,1}}\!+\!\nu) e^{C\nu^{-1}t\|\theta_0\|_{B^{0}_{N,1}}} + C\nu^2\biggl( \frac{\|\theta_0\|_{B^{0}_{N,1}}+\nu^2}{\|\theta_0\|_{B^{0}_{N,1}}^2}\biggr)\biggl(e^{C\nu^{-1}t\|\theta_0\|_{B^{0}_{N,1}}}-1\biggr)\cdotp
\end{align*}
\end{thm}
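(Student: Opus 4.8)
The plan is to derive Theorem \ref{thm:wellposed_BsnqN} as a modification of the proof in \cite{DanP2008a}, adapting from the homogeneous to the nonhomogeneous Besov setting and tracking the dependence on $p$ and $\nu$. First I would recall the two conserved/controlled quantities coming from the temperature equation: since $\theta$ solves a free transport equation with a divergence-free velocity, all $L^q$ norms are preserved, in particular $\|\theta(t)\|_{L^{N/3}}=\|\theta_0\|_{L^{N/3}}$, which is the first assertion in \eqref{es:lorentz_BsnqN}. For the velocity, I would next set up the standard Stokes/Duhamel formulation: write $u$ as the sum of the free heat evolution of $u_0$, the Duhamel term from the forcing $\theta e_N$, and the Duhamel term from the nonlinearity $-\mathbb{P}\,\div(u\otimes u)$ (where $\mathbb{P}$ is the Leray projector). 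Using the known boundedness of $\mathbb{P}$ and of the heat semigroup on Lorentz spaces, together with the bilinear estimate in $L^{N,\infty}$ adapted from the Koch--Tataru / Kato scheme, a fixed-point argument under the smallness condition \eqref{cdt:small_BsnqN} yields global existence in the scaling-critical Lorentz class and the bound on $\|u(t)\|_{L^{N,\infty}}$ in \eqref{es:lorentz_BsnqN}; here it is essential that $\|\theta_0\|_{L^{N/3}}$ is conserved so the forcing contributes a \emph{time-independent} source at the critical level.

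Once the critical-regularity solution is in hand, I would propagate the subcritical Besov regularity $B^{\frac{N}{p}-1}_{p,1}$. Apply Proposition \ref{prop:trans_diff} to the velocity equation, viewing $u\cdot\nabla u = \div(u\otimes u)$ and $\theta e_N$ as source terms, and $\nabla\Pi$ as being recovered by the elliptic equation $-\Delta\Pi = \div\div(u\otimes u)-\d_N\theta$; the transport-diffusion estimate \eqref{es:transdiff} with $(p_1,\rho,\rho_1)$ chosen appropriately gives, schematically,
\begin{equation*}
U_p(t)\lesssim e^{C V(t)}\Bigl(\|u_0\|_{B^{\frac{N}{p}-1}_{p,1}}+\nu^{-1}\!\int_0^t\|u\otimes u\|_{B^{\frac{N}{p}}_{p,1}}\,dt'+\nu^{-1}\!\int_0^t\|\theta\|_{B^{\frac{N}{p}-1}_{p,1}}\,dt'\Bigr),
\end{equation*}
with $V(t)$ controlled by $\int_0^t\|\nabla u\|_{L^\infty}$, itself bounded by $\nu\|u\|_{L^1_t(B^{\frac{N}{p}+1}_{p,1})}\le U_p(t)$ up to the critical Lorentz norm. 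The product estimate $\|u\otimes u\|_{B^{\frac{N}{p}}_{p,1}}\lesssim \|u\|_{L^{N,\infty}}\|u\|_{B^{\frac{N}{p}+1}_{p,1}}$ (paraproduct plus the embedding $B^{\frac{N}{p}+1}_{p,1}\hookrightarrow L^\infty$, valid for $p>N$) closes the argument: the $\|u\|_{L^{N,\infty}}$ factor is small by \eqref{cdt:small_BsnqN}, so the quadratic term is absorbed, and one obtains $U_p(t)\le A(t)$ with the explicit $A(t)$ as stated — the $e^{C\nu^{-1}t\|\theta_0\|_{B^0_{N,1}}}$ growth coming from the linear $\theta$ forcing combined with the transport exponential. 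The temperature bound \eqref{es:theta_BsnqN} then follows from Proposition \ref{prop:trans_diff} (or Proposition \ref{prop:trans_vishik}) applied to the free transport equation for $\theta$ with velocity of Lipschitz norm bounded by $A(t)$, and \eqref{es:uPi_BsnqN} is read off from the velocity equation, bounding $\d_t u$ and $\nabla\Pi$ by $\Delta u$, $\div(u\otimes u)$ and $\theta e_N$ in $L^1_t(B^{\frac{N}{p}-1}_{p,1})$.

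For uniqueness and the regularity needed to make the above rigorous, I would run a standard contraction estimate at the level of differences in $B^{\frac{N}{p}-1}_{p,1}$ (or one notch below, as in the 2-D argument at the end of Section \ref{sect:complt_thm_dXBsnq2}), using the transport and transport-diffusion estimates together with the bilinear bounds and Gronwall's lemma; continuity in time of $(\theta,u,\nabla\Pi)$ with values in the stated spaces comes from the usual arguments for transport-diffusion equations with $\ell^1$ summability in frequency. The main obstacle I expect is the careful bookkeeping of the $p$- and $\nu$-dependence: one must verify that the smallness constant $c$ in \eqref{cdt:small_BsnqN} can be taken \emph{independent of $p$} (which forces using only $p$-independent constants in the critical Lorentz estimates and isolating all $p$-dependence into the subcritical step), and one must get the precise shape of $A(t)$ right, in particular the way the conserved quantity $\|\theta_0\|_{L^{N/3}}$ decouples the critical smallness from the (growing but finite) subcritical bound. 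Since the reference \cite{DanP2008a} already carries out the homogeneous-space version, the real work here is this transcription and the explicit constants, not a genuinely new idea.
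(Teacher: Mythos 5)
Your overall architecture (quote the critical Lorentz theory of \cite{DanP2008a} for global existence and the $L^{N,\infty}$ bound, propagate the subcritical $B^{\frac{N}{p}-1}_{p,1}$ regularity by exploiting the smallness of $\|u\|_{L^{N,\infty}}$ to absorb the quadratic term, bound $\theta$ by a Vishik-type estimate, and conclude by Gronwall) is the same as the paper's, but your central velocity estimate does not close as written. Your schematic bound keeps a factor $e^{CV(t)}$ with $V(t)\lesssim \int_0^t\|\nabla u\|_{L^\infty}\,dt'\lesssim \nu^{-1}U_p(t)$ in front of everything; with such a prefactor the claimed absorption fails, since $e^{C\nu^{-1}U_p(t)}\,c\,U_p(t)$ cannot be absorbed into $U_p(t)$ globally in time, and a Gronwall argument in this form yields at best a doubly exponential bound, not the stated $A(t)$. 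If instead you really treat $u\cdot\nabla u$ purely as a source (so that $V\equiv 0$) and invoke Proposition \ref{prop:trans_diff}, then the nonhomogeneous estimate \eqref{es:transdiff} carries a $(1+\nu t)$ loss coming from the low frequencies, so the inequality reads $U_p(t)\lesssim (1+\nu t)\big(\cdots + Cc\,U_p(t)\big)$ and absorption again breaks down as soon as $t\gtrsim (c\nu)^{-1}$. This is exactly where the paper departs from a naive application of Proposition \ref{prop:trans_diff}: it applies $\mathbb{P}\Delta_j$ to the momentum equation and uses the blockwise Duhamel/smoothing estimate of Lemma 4.4 in \cite{DanP2008a} (cf.\ \eqref{es:u_hf_BsnqN}) for $j\geq 0$, which produces neither an exponential nor a $(1+\nu t)$ loss, while the low-frequency block $\Delta_{-1}u$ --- which your proposal never addresses --- is controlled separately through the refined Young inequality \eqref{obs:u_lf_BsnqN} against the time-uniform $L^{N,\infty}$ bound; the $(1+\nu t)$ loss then only multiplies the already known small Lorentz norm, and the absorption coefficient $\nu^{-1}\|u\|_{L^\infty_t(\sC^{-1})}$ is small \emph{uniformly in time} thanks to $L^{N,\infty}\hookrightarrow \sC^{-1}$ and \eqref{cdt:small_BsnqN}. (Incidentally, your extra $\nu^{-1}$ in front of the source integrals is also off: the parabolic estimate carries no such factor, and it is needed that it does not, otherwise even the absorbed constant would be $c\nu^{-1}$ rather than $c$.)

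A second, smaller point: to obtain the precise $A(t)$ and \eqref{es:theta_BsnqN} you must use the Vishik-type estimate of Proposition \ref{prop:trans_vishik} for $\theta$, which is \emph{linear} in $\|\nabla u\|_{L^1_t(L^\infty)}$, and you must insert it \emph{inside} the Gronwall argument for $U_p$ (it is what produces the term $C\nu^{-1}\|\theta_0\|_{B^{0}_{N,1}}\int_0^t U_p\,dt'$ and hence the exponential $e^{C\nu^{-1}t\|\theta_0\|_{B^0_{N,1}}}$ in $A$); the alternative you also list, Proposition \ref{prop:trans_diff}, only gives $\|\theta\|_{B^{\frac{N}{p}-1}_{p,1}}\lesssim \|\theta_0\|\,e^{CV(t)}$ and would again lead to a double exponential. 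The conservation of $\|\theta(t)\|_{L^{N/3}}$ and the fixed-point step in $L^{N,\infty}$ are fine (the paper simply quotes Theorem 1.4 of \cite{DanP2008a} for the latter), and your product estimate $\|u\otimes u\|_{B^{\frac{N}{p}}_{p,1}}\lesssim \|u\|_{L^{N,\infty}}\|u\|_{B^{\frac{N}{p}+1}_{p,1}}$ is correct, though the relevant justification is $L^{N,\infty}\hookrightarrow\sC^{-1}$ together with paraproduct and remainder continuity rather than the embedding $B^{\frac{N}{p}+1}_{p,1}\hookrightarrow L^\infty$.
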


The above nonhomogeneous estimates are  based on a very simple observation allowing to control the low frequency part of the velocity field :  for $r$ satisfying $1+\frac{1}{p}=\frac{1}{N} + \frac{1}{r}$, we have
as a consequence of refined Young's inequalities (see \cite{BCD2011}):
\begin{equation}\label{obs:u_lf_BsnqN}
\|\Delta_{-1} u(t)\|_{L^{p}} \lesssim \|\mathcal{F}^{-1}\chi\|_{L^{r}} \|u(t)\|_{L^{N,\infty}}, 
\end{equation}
whose r.h.s. is bounded according to \eqref{es:lorentz_BsnqN} thanks to Theorem 1.4 in  \cite{DanP2008a}. 
\medbreak
 For the reader  convenience,  let us outline the proof of  \eqref{es:theta_BsnqN} and \eqref{es:uPi_BsnqN}
 for a  smooth solution of \eqref{eq:BsnqN}.
 Let   $\mathbb{P} := {\rm Id}-\nabla \Delta^{-1}\div$ be the Leray projector
onto divergence-free vector fields. 
On  one hand, applying $\mathbb{P} \Delta_j$
 to the momentum equation in \eqref{eq:BsnqN} and noting that Lemma 4.4 in \cite{DanP2008a} still holds for nonhomogeneous Besov space, we get some constant $C$ and sequence $(c_j)_{j\in\N}$ 
 where $\|(c_j)\|_{\ell^1} \leq 1$ such that for any $j \geq 0$ and $t\geq0,$ 
\begin{align}\label{es:u_hf_BsnqN}
\nonumber \|\Delta_j u(t)\|_{L^p} \leq e^{-C \nu t 2^{2j}} \|\Delta_j u_0\|_{L^p}  + & 
\int^{t}_{0} e^{-C \nu (t-t') 2^{2j}} \big( \|\Delta_j \theta \|_{L^p}  \\
& +C2^{j(\frac{N}{p}-1)}c_j \|u\|_{\sC^{-1}} \|u\|_{B^{\frac{N}{p}+1}_{p,1}} \big) \,dt'.
\end{align}
On the other hand, the low frequency part of $u$ could be controlled according to \eqref{obs:u_lf_BsnqN}, 
and we get assuming  the smallness condition \eqref{cdt:small_BsnqN} on initial data,
\begin{eqnarray}\label{es:u_lf_BsnqN}
\|\Delta_{-1}u\|_{L^{\infty}_t (L^p)} + \nu \|\Delta_{-1}u\|_{L^{1}_t (L^p)} &\!\!\!\lesssim\!\!\! &(1+\nu t)\|u\|_{L^{\infty}_t (L^{N,\infty})}\nonumber\\
&\!\!\!\lesssim\!\!\!& (1+\nu t)\bigl( \|u_0\|_{L^{N,\infty}} + \nu^{-1} \|\theta_0\|_{L^{\frac{N}{3}}}\bigr).
\end{eqnarray}
Putting \eqref{es:u_hf_BsnqN} and \eqref{es:u_lf_BsnqN} together thus implies  that 
\begin{equation*}
U_{p}(t) \lesssim \|u_0\|_{B^{\frac{N}{p}-1}_{p,1}} + \|\theta\|_{L^1_t (B^{\frac{N}{p}-1}_{p,1})}
+\nu^{-1}\|u\|_{L^\infty_t(\sC^{-1})}U_p(t) +(1+\nu t)\bigl( \|u_0\|_{L^{N,\infty}} + \nu^{-1} \|\theta_0\|_{L^{\frac{N}{3}}}\bigr).
\end{equation*}
As $L^{N,\infty}\hookrightarrow\sC^{-1}$ and \eqref{es:lorentz_BsnqN} is
fulfilled,  the third  term of the r.h.s. above may be absorbed by the l.h.s. if $c$ is small enough in  \eqref{cdt:small_BsnqN}.
\medbreak
As regards $\theta,$ thanks to Proposition \ref{prop:trans_vishik} and to the 
embedding $B^0_{N,1}\hookrightarrow B^{\frac Np-1}_{p,1}$ for $p\geq N,$  we have for all $t\geq0,$
\begin{equation}\label{es:vishik_BsnqN}
\|\theta(t)\|_{B^{\frac{N}{p}-1}_{p,1}}\lesssim \|\theta\|_{\widetilde{L}^{\infty}_t (B^{0}_{N,1})} \lesssim \|\theta_0\|_{B^{0}_{N,1}} (1+\|\nabla u\|_{L^1_t (L^{\infty})}).
\end{equation}
Adding up the  above estimates to the bound of $U_p (t)$ yields
\begin{equation*}
U_p(t) \leq C (\|u_0\|_{B^{\frac{N}{p}-1}_{p,1}}+c\nu) + C (\|\theta_0\|_{B^{0}_{N,1}}+c\nu^2)t + C\nu^{-1}\|\theta_0\|_{B^{0}_{N,1}} \int^{t}_{0} U_p\,dt'.
\end{equation*}
The fact that $\int^{t}_{0} t'e^{B(t-t')}\,dt' \leq \frac{1}{B^2} (e^{Bt}-1) $ for any positive constant $B$, allows to bound the term $U_p(t)$ by $A(t)$ according to  Gronwall lemma.
\smallbreak
The estimates for  $\nabla \Pi$ can be  deduced from the equation, as
\begin{equation*}
\nabla \Pi = - \mathbb{Q} (u\cdot \nabla u + \theta e_N)\quad\hbox{with}\quad 
\mathbb{Q} := {\rm Id} - \mathbb{P}.  
\end{equation*}
Finally, the bound for $\d_t u$  may be determined  from the momentum equation.


\subsection{A priori estimates for striated regularity}

In this subsection, we assume that $\nu=1$ for simplicity (which is not restrictive, owing to \eqref{change:Bsnq}). 

As explained at the beginning of this section, we shall  concentrate on the proof of 
estimates for $\cT_X u$ in  $L^\infty_t(\sC^{\ep-2})\cap \wt L^1_t(\sC^{\ep}),$ for all $t\geq0,$ and this will be (mainly) based on the smoothing properties of the heat flow.  More precisely,  applying the para-vector field  $\cT_X$ to the velocity equation of $(B_{1,N}),$ we discover that
\begin{equation*}
\d_t \cT_X u +u\cdot \nabla \cT_X u-\Delta \cT_X u + \nabla \cT_X \Pi = g,
\end{equation*}
with
  \begin{equation}\label{eq:g_PdXBsnqN}
  g:= - [\cT_{X}, \d_t+u\cdot\nabla]u +[\cT_X,\Delta] u - [\cT_X, \nabla] \Pi + (\cT_{X} \theta )e_N.
  \end{equation}
In general, the divergence-free property is not satisfied by $\cT_Xu$ as 
$$\div \cT_X u = \div( T_{\d_k X} u^k-T_{\div X}u).$$
In order to enter into the standard theory for the Stokes system, we  set 
 $$ v:= \cT_X u - w\quad\hbox{with}\quad w:= T_{\d_k X} u^k-T_{\div X}u.$$
Now,  $v$ satisfies:
\begin{equation} \label{eq:v_PdXBsnqN} \tag{$S$}
\left\{ \begin{array}{l}
\d_t v - \Delta v + \nabla \cT_X \Pi = \widetilde{g},\\[1ex]
\div v = 0,\\
v|_{t=0}=v_0,
\end{array}\right.
\end{equation}
with $\widetilde{g} := g- u\cdot \nabla \cT_X u -(\d_t w - \Delta w)$ and $g$ defined in \eqref{eq:g_PdXBsnqN}. 
\medbreak
Based on that observation, proving a priori estimates for striated regularity involves three steps.
The first step is dedicated to bounding $\widetilde{g}$ (which mainly requires the commutator estimates
of the appendix). In the second step, we take advantage of the smoothing effect of the heat
flow so as to estimate $v.$ 
In the third step, we resume to $\cT_X u$ and also bound $X$ 
so as to complete the proof of our study of striated regularity. 

\subsubsection*{First step: bounds of $\wt g$}

To start with, let us  bound  $g$ in $\big( \widetilde{L}^1_t(\sC^{\ep-2})\big)^N$ for any positive $t.$  According to Proposition \ref{prop:ce_dBnsqN} and to the remark that follows, the  first commutator of $g$ satisfies
\begin{multline}\label{es:g1_PdXBsnqN}
\|[\cT_X, \d_t+u\cdot \nabla] u\|_{\widetilde{L}^1_t(\sC^{\ep-2})}  \lesssim  \|u\|_{L^{\infty}_t (\sC^{-1})}\|\cT_X u\|_{\widetilde{L}^1_t (\sC^{\ep})} \\+\int^t_0  \|u\|_{B^{\frac{N}{p}+1}_{p,1}} \|\cT_{X}u\|_{\sC^{\ep-2}}\,dt' 
  + \int_0^t  \|X\|_{\widetilde{\sC}^{\ep}} \|u\|_{B^{\frac{N}{p}+1}_{p,1}} \|u\|_{B^{\frac{N}{p}-1}_{p,1}}\,dt'.
\end{multline}

Next, thanks to the commutator estimates \eqref{es:ce2} we have the following two bounds,

\begin{equation}\label{es:g2_PdXBsnqN}
 \|[\cT_X,\Delta] u\|_{L^1_t (\sC^{\ep-2})} \lesssim  \int^t_0  \|\nabla X\|_{\sC^{\ep-1}}\|\nabla u\|_{\sC^0}\,dt',
\end{equation}
\begin{equation}\label{es:g3_PdXBsnqN}
 \|[\cT_X, \nabla ]\Pi\|_{L^1_t (\sC^{\ep-2})} \lesssim \int_0^t  \|\nabla X\|_{\sC^{\ep-1}} \|\nabla \Pi\|_{\sC^{-1}}\,dt'.
\end{equation}

Taking  Lemma \ref{lemma:ce3} into account and integrating from $0$ to $t$ gives
\begin{align}\label{es:g4_PdXBsnqN}
 \|\cT_X \theta\|_{L^1_t (\sC^{\ep-2})} & \lesssim  \|\d_X \theta\|_{L^1_t (\sC^{\ep-2})}
  + \int^t_0 \|X\|_{\widetilde{\sC}^{\ep}} \|\theta\|_{B^{\frac{N}{p}-1}_{p,1}}\,dt'.
\end{align}

Putting together \eqref{es:g1_PdXBsnqN} to \eqref{es:g4_PdXBsnqN} thus yields

\begin{multline*}
\|g\|_{\widetilde{L}^1_t(\sC^{\ep-2})}  \lesssim  \|u\|_{L^{\infty}_t (\sC^{-1})}\|\cT_X u\|_{\widetilde{L}^1_t (\sC^{\ep})}+\int^t_0  \|u\|_{B^{\frac{N}{p}+1}_{p,1}} \|\cT_{X}u\|_{\sC^{\ep-2}}\,dt'  \\
  + \int_0^t  \big( \|u\|_{B^{\frac{N}{p}+1}_{p,1}} (\|u\|_{B^{\frac{N}{p}-1}_{p,1}}+1) + \|\nabla \Pi\|_{\sC^{-1}} + \|\theta\|_{B^{\frac{N}{p}-1}_{p,1}} \big) \|X\|_{\widetilde{\sC}^{\ep}} \,dt'+\|\d_X \theta\|_{L^1_t (\sC^{\ep-2})} .
\end{multline*}

Bounding the second term  of $\widetilde{g}$ is obvious : taking advantage of  Bony's decomposition,
and remembering that $\frac{N}{p}+\ep > 1 $ and that  $\div u =0$, we get
\begin{align*}
\|u\cdot \nabla \cT_X u\|_{\widetilde{L}^1_t (\sC^{\ep-2})} & \lesssim \|u\|_{{L}^{\infty}_t(\sC^{-1})}\|\cT_{X}u\|_{\widetilde{L}^1_t (\sC^\ep)} +\int^t_0\|u\|_{B^{\frac{N}{p}+1}_{p,1}}\|\cT_{X}u\|_{\sC^{\ep-2}}\,dt'.
\end{align*} 
Finally,  to bound the term $\d_t w - \Delta w $ in $\widetilde{L}^1_t(\sC^{\ep-2}),$ we use the decomposition
$$
\d_tw-\Delta w= \sum_{\alpha=1}^3 W_{\alpha},
$$
with 
$$
W_1:=T_{\d_k X}\d_tu^k-T_{\div X}\d_tu,\quad
W_2:=T_{\d_k \d_tX}u^k-T_{\div\d_tX}u, \quad
W_3:=\Delta\bigl(T_{\div X}u-T_{\d_k X}u^k\bigr).
$$
By embedding, it suffices to bound the components of $W_\alpha$ in $L^1_t(B^{\frac{N}{p}+\ep-2}_{p,1})$  with $\alpha=1,2,3.$ 
\medbreak
Now, standard continuity results for the paraproduct ensure that
$$
\begin{aligned}
\|W_1 \|_{L^1_t (B^{\frac{N}{p}+\ep-2}_{p,1})} & \lesssim \int^t_0 \|\nabla  X \|_{\sC^{\ep-1}} \|\d_t u\|_{B^{\frac{N}{p}-1}_{p,1}}\,dt',\\
\|W_2 \|_{L^1_t (B^{\frac{N}{p}+\ep-2}_{p,1})}  &\lesssim \int^t_0 \|\d_t X\|_{\sC^{\ep-2}}\|u\|_{B^{{\frac Np}+1}_{p,1}}\,dt',\\
\| W_3\|_{L^1_t(B^{\frac{N}{p}+\ep-2}_{p,1})} &\lesssim\int^t_0  \| \nabla X \|_{\sC^{\ep-1}}\|u\|_{B^{\frac{N}{p}+1}_{p,1}}\,dt'.
\end{aligned}$$
To estimate $\d_tX$ in the bound of $W_2$, we use the fact that
$$
\d_tX=-u\cdot\nabla X+\d_Xu.
$$
The  convection term $u \cdot \nabla X=\div(u\otimes X)$ may be easily bounded (use Bony's decomposition 
and continuity results for the remainder and paraproduct operators) as follows:
\begin{equation*}
\|u \cdot \nabla X\|_{\sC^{\ep-2}} \lesssim \|u\|_{B^{\frac{N}{p}-1}_{p,1}} \|X\|_{\sC^{\ep}}.
\end{equation*}
Bounding $\d_Xu$ according to  Lemma \ref{lemma:ce3} yields 
\begin{equation*}
\|W_2 \|_{L^1_t (B^{\frac{N}{p}+\ep-2}_{p,1})}   \lesssim \int^t_0 (\|X\|_{\widetilde{\sC}^{\ep}}\|u\|_{B^{\frac{N}{p}-1}_{p,1}} + \|\cT_X u\|_{\sC^{\ep-2}}) \|u\|_{B^{\frac{N}{p}+1}_{p,1}}\, dt'.
\end{equation*}
 Putting together all the above estimates  and using (slightly abusively) the notation $U'_p(\tau):= \|u(\tau)\|_{B^{\frac{N}{p} +1}_{p,1}}$ for any $\tau\geq0,$  we eventually get the following estimate for $\wt g$:
\begin{multline}\label{es:tg_PdXBsnqN}
\|\widetilde{g}\|_{\widetilde{L}^1_t(\sC^{\ep-2})} \lesssim  \|u\|_{L^{\infty}_t (\sC^{-1})}\|\cT_X u\|_{\widetilde{L}^1_t (\sC^{\ep})}\!+\!\|\d_X \theta\|_{L^1_t (\sC^{\ep-2})}\! +\!  \int^t_0 U'_p \|\cT_{X}u\|_{\sC^{\ep-2}}\,dt'  \hfill\cr\hfill
  + \int_0^t  \big( U'_p(\|u\|_{B^{\frac{N}{p}-1}_{p,1}}+1) + \|(\d_t u,\nabla\Pi, \theta)\|_{B^{\frac{N}{p}-1}_{p,1}} \big) \|X\|_{\widetilde{\sC}^{\ep}}\, dt'.
  \end{multline}

\subsubsection*{Second step: Bounds for $v$}

The second step is devoted to bounding $v$ in  $\big( L^\infty_t(\sC^{\ep-2})\cap \wt L^1_t(\sC^{\ep}) \big)^N.$ 
  Granted with \eqref{es:tg_PdXBsnqN}, this will essentially follow from the smoothing properties of the heat flow
after spectral localization. More precisely,  
applying  $\mathbb{P}\Delta_j$ to the equation \eqref{eq:v_PdXBsnqN} yields for all $j\geq-1,$
\begin{equation*}
\left\{ \begin{array}{l}
\d_t \Delta_jv - \Delta \Delta_jv = \mathbb{P}\Delta_j\widetilde{g}\\[1ex]
\Delta_jv|_{t=0} = \Delta_jv_{0}.
\end{array}\right.
\end{equation*}
Now, Lemma 2.1 in  \cite{Che1999} implies that if $j\geq0,$
\begin{align*}
\|\Delta_jv(t)\|_{L^{\infty}} \leq e^{-ct 2^{2j}} \|\Delta_jv_{0}\|_{L^{\infty}} + \int^{t}_{0} e^{-c(t-t') 2^{2j}} 
\|\Delta_j\widetilde{g}(t')\|_{L^{\infty}}\,dt'.
\end{align*}
Therefore,  taking the supremum over $j\geq0,$ we find that the high frequency part of $v$ satisfies
\begin{equation*}
\sup_{j\geq 0}2^{j(\ep-2)}\|\Delta_jv\|_{L^{\infty}_t (L^{\infty})}+ \sup_{j\geq 0}2^{j\ep}\|\Delta_j v\|_{L^{1}_t (L^{\infty})} \lesssim \|v_0\|_{\sC^{\ep-2}} + \|\widetilde{g}\|_{\widetilde{L}^1_t(\sC^{\ep-2})}.
\end{equation*}
Bounding  the norm $\| v_0\|_{\sC^{\ep-2}}$ stems from the definition of $v_0$ and from Lemma \ref{lemma:ce3}: we get
\begin{equation}\label{es:v0_PXdBsnqN}
\|v_0\|_{\sC^{\ep-2}} \lesssim \|\cT_{X_0} u_0\|_{\sC^{\ep-2}} + \| u_0\|_{\sC^{-1}}\|X_0\|_{\sC^{\ep}} \lesssim  \|\d_{X_0} u_0\|_{\sC^{\ep-2}} + \| u_0\|_{B^{\frac{N}{p}-1}_{p,1}}\|X_0\|_{\widetilde{\sC}^{\ep}}.
\end{equation}

To handle the low frequencies of $v,$ we first observe from the definition of $\|\cdot\|_{\sC^s}$ that
$$
\|\Delta_{-1}v\|_{ L^{\infty}_t (L^\infty)} +  \|\Delta_{-1} v\|_{L^1_t(L^{\infty})} 
\lesssim \|v\|_{ L^{\infty}_t (\sC^{-2})} +  \|v\|_{L^1_t(L^{\infty})}.
$$
Then we recall the definition $v = \cT_X u - T_{\d_k X} u^k + T_{\div X}u,$ and hence continuity results for paraproduct yield 
\begin{equation*}
\|v\|_{ L^{\infty}_t (\sC^{-2})} +  \|v\|_{L^1_t(L^{\infty})} \lesssim \|X\|_{L^{\infty}_t (L^\infty)} (\|u\|_{\widetilde{L}^{\infty}_t(B^{\frac{N}{p}-1}_{p,1})}+\|u\|_{L^{1}_t(B^{\frac{N}{p}+1}_{p,1})}).
\end{equation*}  
Moreover, the fact that $X$ satisfies the transport equation \eqref{eq:X} ensures
\begin{equation*}
\|X\|_{L^{\infty}_t (L^\infty)} \leq \|X_0\|_{L^{\infty}} e^{\|\nabla u\|_{L^{1}_t(L^\infty)}}.
\end{equation*} 
Hence, using the notation $U_p$ of Theorem \ref{thm:wellposed_BsnqN}, we control the low frequency part of $v$ via
\begin{equation*}
\|\Delta_{-1}v\|_{ L^{\infty}_t (L^\infty)} +  \|\Delta_{-1} v\|_{L^1_t(L^{\infty})} \lesssim \|X_0\|_{L^{\infty}}
U_p(t) e^{CU_p(t)}.
\end{equation*}
Putting the estimates for low and high frequency parts of $v$ together with \eqref{es:v0_PXdBsnqN}, we end up with
\begin{equation}\label{es:tsH_PdXBsnqND}
\widetilde{\sH}(t)\lesssim \|\d_{X_0} u_0\|_{\sC^{\ep-2}} + \| u_0\|_{B^{\frac{N}{p}-1}_{p,1}}\|X_0\|_{\widetilde{\sC}^{\ep}} + \|\widetilde{g}\|_{\widetilde{L}^1_t(\sC^{\ep-2})}+
 \|X_0\|_{L^{\infty}}
U_p(t) e^{CU_p(t)},
\end{equation}
where we denoted
\begin{equation*}
\widetilde{\sH}(t):= \|v\|_{L^{\infty}_t (\sC^{\ep-2})}+\|v\|_{\widetilde {L}^{1}_t(\sC^{\ep})}.
\end{equation*}

\subsubsection*{Third step: bounds for striated regularity}
Keeping \eqref{es:dXu_TXu_BsnqN1} and  \eqref{es:dXu_TXu_BsnqN2} in mind, the core of the proof
consists in  deriving appropriate  bounds for  $\cT_X u$ and $X.$ Now, remembering that
$$\cT_X u=v+w\quad\hbox{with}\quad w=T_{\d_k X}u^k - T_{\div X}u,$$
it is easy to bound the following quantity:
\begin{equation*}
\sH(t) := \|\cT_X u\|_{L^{\infty}_t (\sC^{\ep-2})}+\|\cT_X u\|_{\widetilde {L}^{1}_t(\sC^{\ep})}.
\end{equation*}
Indeed, continuity results for paraproduct operators guarantee that
\begin{equation}\label{es:w}
\|w\|_{L^{\infty}_t (\sC^{\ep-2})}+\|w\|_{\widetilde {L}^{1}_t(\sC^{\ep})} \lesssim  \|u\|_{L^{\infty}_t (\sC^{-1})}\|X\|_{L^{\infty}_t (\sC^{\ep})} + \int^t_0 \|u\|_{\sC^1} \|\nabla X\|_{\sC^{\ep-1}}\,dt'.
\end{equation}
Let us now bound $X$ in $L^\infty_t (\widetilde{\sC}^{\ep})$. As $X$ and $\div X$ satisfy \eqref{eq:X} and  \eqref{eq:divX},
respectively, H\"older estimates for transport equations 
 and Lemma \ref{lemma:ce3} imply that 
\begin{align}\label{es:X_PdXBsnqN}
\| X \|_{L^{\infty}_t (\widetilde{\sC}^{\ep})} & \leq \|X_0\|_{\widetilde{\sC}^{\ep}} + \int^t_0 \|\nabla u\|_{L^\infty} \| X \|_{\widetilde{\sC}^{\ep}}\, dt' + \|\d_X u\|_{\widetilde{L}^1_t (\sC^{\ep})} \nonumber\\
& \leq \|X_0\|_{\widetilde{\sC}^{\ep}} + \int^t_0 U_p' \| X \|_{\widetilde{\sC}^{\ep}}\, dt' + \|\cT_X u\|_{\widetilde{L}^1_t (\sC^{\ep})}.
\end{align}
Resuming to \eqref{es:w}, one may refine the bound of $w$ as follows:
\begin{equation}\label{es:w_PdXBsnqN}
\|w\|_{L^{\infty}_t (\sC^{\ep-2})}+\|w\|_{\widetilde {L}^{1}_t(\sC^{\ep})} \lesssim  \|u\|_{L^{\infty}_t (\sC^{-1})}\|\cT_X u\|_{\widetilde{L}^{1}_t (\sC^{\ep})} + \|X_0\|_{\widetilde{\sC}^\ep} + \int^t_0 U_p'\| X\|_{\widetilde{\sC}^{\ep}}\,dt'.
\end{equation}

By the embedding $L^{N,\infty}(\R^N) \hookrightarrow \sC^{-1}(\R^N)$ and smallness condition \eqref{cdt:small_BsnqN}, we know
that the quantity $\|u\|_{L^{\infty}_t (\sC^{-1})}$ is small. Therefore, from  \eqref{es:tg_PdXBsnqN}, \eqref{es:tsH_PdXBsnqND} and \eqref{es:w_PdXBsnqN}, we infer that 
\begin{multline}\label{es:sH_PdXBsnqN}
\sH(t)  \lesssim  \|\d_{X_0} u_0\|_{\sC^{\ep-2}} + \|X_0\|_{\widetilde{\sC}^{\ep}}(\|u_0\|_{B^{\frac{N}{p}-1}_{p,1}}+1) + \|X_0\|_{L^{\infty}} U_p(t) e^{U_p(t)}
  \\
    +\|\d_X \theta\|_{L^1_t (\sC^{\ep-2})}  + \int^t_0 U'_p\,\sH\,dt'    + \int^t_0 \big( U'_p(U_p+1)+ \|(\d_t u,\nabla\Pi,\theta)\|_{B^{\frac{N}{p}-1}_{p,1}} \big) \|X\|_{\widetilde{\sC}^{\ep}} \,dt'.
\end{multline}

Denoting $$\sK(t) := \sH(t)+\| X \|_{L^{\infty}_t (\widetilde{\sC}^{\ep})}$$
and using \eqref{es:X_PdXBsnqN} and \eqref{es:sH_PdXBsnqN}, we obtain that
$$
\displaylines{
\sK(t) \lesssim \big( \|\d_{X_0} u_0\|_{\sC^{\ep-2}} + \|X_0\|_{\widetilde{\sC}^{\ep}}(\|u_0\|_{B^{\frac{N}{p}-1}_{p,1}}+1) + \|X_0\|_{L^{\infty}} U_p(t) e^{CU_p(t)}+ \|\d_X \theta\|_{L^1_t (\sC^{\ep-2})}\big) \hfill\cr\hfill 
 + \int^t_0 \Big(  U'_p(U_p+1)+ \|(\d_t u,\nabla\Pi,\theta)\|_{B^{\frac{N}{p}-1}_{p,1}} \Big) \sK(t')\, dt'.}
$$
In order to bound $\d_X\theta,$ it suffices to remember that
$$
\d_t\d_X\theta+u\cdot\nabla\d_X\theta=0.
$$
 As $\frac{N}{p}+\ep > 1$, applying  Proposition \ref{prop:trans_diff} gives 
\begin{equation*}
\|\d_X \theta\|_{L^{\infty}_t (\sC^{\ep-2})} \lesssim \|\d_{X_0} \theta_0\|_{ \sC^{\ep-2}}e^{CU_p(t)}.
\end{equation*}  
Furthermore, using embedding and   Vishik type estimate \eqref{es:vishik_BsnqN} for $\theta$ yields
\begin{equation*}
\|\theta\|_{L^{\infty}_t (B^{\frac{N}{p}-1}_{p,1})} \lesssim\|\theta\|_{\widetilde{L}^{\infty}_t (B^{0}_{N,1})} \lesssim \|\theta_0\|_{B^{0}_{N,1}}(1+U_p(t)) .
\end{equation*}
Applying  Gronwall lemma to $\sK(t)$ and mingling with the above bounds for $(\d_X \theta, \theta)$ 
and with the bounds of Theorem \ref{thm:wellposed_BsnqN} for $U_p$ and $(\d_tu,\nabla\Pi)$ yield
\begin{multline*}
\sK(t) \leq C \big( \|\d_{X_0} u_0\|_{\sC^{\ep-2}} + \|X_0\|_{\widetilde{\sC}^{\ep}}(\|u_0\|_{B^{\frac{N}{p}-1}_{p,1}}+1) + \|X_0\|_{L^{\infty}} + \|\d_{X_0} \theta_0\|_{\sC^{\ep-2}}\big) \cr\hfill \exp \Big(C\big((A^2(t)+1)(t+1)(\|\theta_0\|_{B^{0}_{N,1}}+1)\big)\Big)
\leq C_{0} \exp \big( \exp ( C_{0}t) \big), 
\end{multline*}
where  $A(t)$ has been  defined in Theorem \ref{thm:wellposed_BsnqN}. Then we deduce from \eqref{es:dXu_TXu_BsnqN1}
and   \eqref{es:dXu_TXu_BsnqN2} that
\begin{equation*}
\|\d_X u\|_{L^{\infty}_t (\sC^{\ep-2})}+\|\d_X u\|_{\widetilde {L}^{1}_t(\sC^{\ep})} \lesssim (1+A(t)) \sK(t).
\end{equation*}
Finally, in order to bound $\d_X\nabla\Pi,$ we use the identity
$$
\d_X\nabla\Pi-\nabla\cT_X\Pi=\div R(X,\nabla\Pi)-R(\div X,\nabla\Pi)+T_{\nabla^2\Pi}\cdot X-\cT_{\nabla X}\Pi,
$$
from which we deduce that
$$
\|\d_X\nabla\Pi-\nabla\cT_X\Pi\|_{\wt{L}_t^1(\sC^{\ep-2})}\lesssim \|X\|_{L_t^\infty(\wt\sC^\ep)}\|\nabla\Pi\|_{L^1_t(B^{\frac Np-1}_{p,1})}.$$
Now, the fact that  $\nabla \cT_X \Pi$ may be estimated in  $\big( \wt L^1_{loc}(\R_+;\sC^{\ep-2})\big)^N$ is a consequence of 
\begin{equation*}
\nabla \cT_X \Pi= \mathbb{Q} \widetilde{g} 
\end{equation*} 
and of \eqref{es:tg_PdXBsnqN} once noticed that  the projector $\mathbb{Q}$  on gradient-like vector fields
is a $0$-th order Fourier multiplier
and that the Fourier transform of $\nabla \cT_X \Pi$ is supported away from~$0.$
This completes the proof of a priori estimates for striated regularity in Theorem \ref{thm:dXBsnqN}.


\subsection{End of the proof of Theorem \ref{thm:dXBsnqN}}
Like in 2-D case, we start by smoothing out the initial data:
$$
(\theta^n_0, u^n_0) := (S_n \theta_0, S_n u_0).
$$
Since $S_n$ is a (uniformly) bounded operator  on all Besov spaces $B^s_{q,r},$  Lebesgue spaces $L^q$ and weak  Lebesgue spaces $L^{q,\infty}$ with $1< q <\infty,$  we can apply Theorem \ref{thm:wellposed_BsnqN} and obtain
 for all $n\in\N$  a unique global solution $(\theta^n , u^n, \nabla \Pi^n)$ 
which fulfills Inequalities \eqref{es:lorentz_BsnqN} to \eqref{es:uPi_BsnqN} with right-hand sides independent of $n$.
In particular, sequence $(\theta^n,u^n,\nabla\Pi^n)_{n\in\N}$ is bounded in the space $F_p^0(\R^N)$ of  Subsection \ref{sect:complt_thm_dXBsnq2}.

\medbreak

If we consider as in the 2-D case, System \eqref{eq:BsnqmnN} satisfied by
 $$(\delta \theta^m_n, \delta u^m_n, \delta \Pi^m_n) := (\theta^m-\theta^n, u^m-u^n, \Pi^m-\Pi^n),$$
and combine estimates for transport and transport-diffusion equations, and product laws like e.g
$$
\begin{aligned}
\|\div (\delta u^m_n  \theta^n)\|_{B^{\frac{N}{p}-2}_{p,1}}  &\lesssim \|\delta u^m_n \|_{B^{\frac{N}{p}}_{p,1}} \|\theta^n\|_{B^{\frac{N}{p}-1}_{p,1}}\cr \mbox{and}\quad\|\div(\delta u^m_n \otimes  u^n)\|_{B^{\frac{N}{p}-2}_{p,1}}  &\lesssim \|\delta u^m_n \|_{B^{\frac{N}{p}-2}_{p,1}} \|u^n\|_{B^{\frac{N}{p}+1}_{P,1}},\end{aligned} 
$$
then we can easily prove that the sequence $(\theta^n, u^n, \nabla \Pi^n)_{n\in\N}$ strongly converges
in  $F^{-1}_p(\R^N)$  to some limit $(\theta,u,\nabla \Pi).$ By uniform estimates and interpolation, we see that
in addition convergences holds true  in $F^{-\eta}_p(\R^N),$ for any $\eta>0,$ and  functional analysis arguments
similar to those of Subsection \ref{sect:complt_thm_dXBsnq2} 
 allow to show that $(\theta,u,\nabla\Pi)$ is in indeed a solution to $(B_{1,N})$
 that belongs to $F^0_p.$ Moreover, $\theta \in \widetilde{L}^{\infty}_{loc}(\R_{+}; B^{0}_{N,1}),$ owing  to \eqref{es:theta_BsnqN}.
  \medbreak
  Next, to establish conservation of striated regularity, we smooth out $X_0$ into $X_0^n:=S_{n}X_0$ and define $ X^n(t,x) := (\d_{X_0^n} \psi_{u^n})(\psi^{-1}_{u^n}(t,x)).$ By adapting the arguments of the 2-D case, it is easy to make all the bounds obtained in the previous step independent of $n,$ and to conclude to the full statement of Theorem  \ref{thm:dXBsnqN}.
The details are left to the reader.


\section{The temperature patch problem with H\"older regularity}\label{sect:5}

This section is devoted to solving the temperature patch problem, through Corollaries \ref{cor:dXBsnq2} and \ref{cor:striated_BsnqN}. We start with general considerations that will be useful both in the 2-D and in the N-D cases.

Because  $\cD_0$ is a simply connected bounded $C^{1,\ep}$ domain of $\R^N$ (see Definition above Corollary \ref{cor:striated_BsnqN}), it  is orientable (as  $\d \cD_0$  is compact), and  one can 
adopt the so-called  level-set characterization (see Chap. 2 of \cite{doC1976} and references therein).
 More precisely,  for any (bounded) open neighborhood $V_0$ of $\overline{\cD}_0,$ there exists some open set $W_0$ with $\overline{\cD}_0\subset W_0\subset V_0$ and a  
 function $f_0 \in C^{1,\ep}(\R^N ; \R)$ such that  $\d \cD_0 = f_0^{-1}(\{0\})\cap W_0,$ $\nabla f_0$ is supported in $\overline W_0$  and does not vanish on   $\d \cD_0$. The key to that global characterization is the \emph{tubular neighborhood theorem.} 
 In the case $N=2,$ we fix   $V_0$ so  that in addition $V_0\cap\overline{\cD_0^\star}=\emptyset$ 
 (see  Corollary \ref{cor:dXBsnq2}) while if $N\geq3$, we assume that $\overline{V_0} \cap (\R^N\setminus J_0)=\emptyset.$ 
 Finally, we also  fix some smooth cut-off function $\chi_0$ with compact support in $V_0$, and  value $1$ on $W_0.$
    \medbreak
As for the classical vortex patch problem for the incompressible Euler equations, 
the result will come up as a consequence of persistence of striated regularity. 
Indeed,  at time $t$  the boundary of  $\cD_t = \psi_u (t, \cD_0)$ is the level set of the function $f(t,\cdot):= f_0 \big(\psi_u^{-1}(t,\cdot)\big)$ satisfying
\begin{equation}
\left\{
\begin{array}{l}
\d_t f +u\cdot \nabla f = 0, \\
 f|_{t=0}=f_0. \\
\end{array}
\right.
\end{equation}
In order to control the regularity in $C^{1,\ep}$ of $\d \cD_t,$ 
it is thus sufficient to show that $\nabla f$ remains in  $(C^{0,\ep})^N$ for all time. In the 2-D case,  the proof  is rather direct because 
it will be possible to apply directly Theorem \ref{thm:dXBsnq2} with the (divergence free) 
 vector field $X_0 := \nabla^{\perp} f_0$ provided $u_0$ satisfies suitable striated regularity properties
 with respect to $X_0.$    
  In the N-D case, more vector fields will be needed, and they will not be divergence free any longer.
\medbreak
Before going to the proof of our corollaries, we would like to point out  two important properties.  The first 
one is that  the characteristic function of any  bounded $C^1$ bounded simply connected domain  $\cD$ of  $\R^N$ for $N \geq 2$ (see \cite{DanM2012} and references therein) satisfies
\begin{equation}\label{obs1:cor_dXBsnq}
\mathds{1}_{\mathcal{D}}\in B^{\frac{1}{q}}_{q,\infty}(\R^N) \hookrightarrow B^{\frac{N}{p}-1}_{p,1}(\R^N),  ~~~ \forall~~~ (N-1)< q\leq p \leq \infty.
\end{equation} 
\medbreak
The second observation concerns $L^N$ boundedness of velocity field $u_0$.  

In  the 2-D case, having  the vorticity compactly supported,  in $L^r$ for some $r>1,$ 
and \emph{mean free} ensures that $u_0\in \big( L^2(\R^2) \big)^2$  (see e.g.  Chap.3 in \cite{MajB2002}). 

 In dimension $N\geq3,$ the $L^N$ type information is an easy consequence of  Biot-Savart law and Hardy-Littlewood-Sobolev inequality. Indeed, we have
\begin{equation}\label{obs2:cor_dXBsnq}
\|u_0\|_{L^N(\R^N)} \lesssim \|~ |\cdot |^{1-N} \star \Omega_0 \|_{L^N(\R^N)} \lesssim \|\Omega_0\|_{L^{\frac{N}{2}}(\R^N)}.
\end{equation}
 In what follows, whenever $Y$ is a (tangential) vector field on $\R^N,$ we shall denote by $\widetilde Y$ its restriction along the surface $\d \cD_t.$ 


\subsection{The 2-D case} 
This short paragraph is devoted to proving  Corollary \ref{cor:dXBsnq2}.
Of course, one can assume with no loss of generality that the exponent $r$ is in $]1,\frac{2}{2-\ep}[$
so that we have $\omega_0 \in B^{\frac{2}{q}-2}_{q,1}(\R^2)$ for some $1<q<\frac2{2-\ep}\cdotp$
Thus, remembering that $u_0$ is in $\bigl(L^2(\R^2)\bigr)^2$ (as explained above), and 
using \eqref{obs1:cor_dXBsnq}, we see that 
 both   $\theta_0$ and $u_0$ fulfill the regularity assumptions of Theorem  \ref{thm:dXBsnq2}.

Next, we observe that  the boundary  $\d \cD_0$  of the initial patch  is just some Jordan curve of the plane.
Setting $X_0 := \nabla^{\perp} f_0$ provides us with   a parametrization $\gamma_0$ of the curve $\d \cD_0$ through the following ODE:
\begin{equation*}
\left\{
\begin{array}{l}
\d_\sigma \gamma_0 (\sigma) =  \widetilde{X}_0\big(\gamma_0 (\sigma)\big), ~~~\forall \sigma \in \mathbb{S}^1,\\
 \gamma_0(\sigma_0)= x_0 \in \d \cD_0. \\
\end{array}
\right.
\end{equation*}
  It is obvious that  $\d_{X_0} \theta_0=\div(X_0\theta_0)\equiv0$ in the sense of distributions. 
  Similarly, because $X_0\equiv0$ on some neighborhood of $\overline{\cD_0^\star},$
 we have  $\div(X_0 \omega_0) = \d_{X_0} \omega_0 \equiv0.$ Hence Theorem \ref{thm:dXBsnq2} applies.
 \smallbreak
 In order to conclude the proof of Corollary \ref{cor:dXBsnq2}, we observe that
  a parametrization for $\d \cD_t$ is given by $\gamma_t(\sigma):= \psi_u(t, \gamma_0\big(\sigma)\big)$ and that, obviously, \begin{equation*}
\left\{
\begin{array}{l}
\d_\sigma \gamma_t (\sigma) =  \widetilde{X}_t\big(\gamma_t (\sigma)\big), ~~~\forall \sigma \in \mathbb{S}^1,\\
 \gamma_t(\sigma_0)= \psi_u (t,x_0) \in \d \cD_t, \\
\end{array}
\right.\quad\hbox{with }\ X_t(x):= (\d_{X_0} \psi_u)\big(\psi^{-1}_{u}(t,x)\big).
\end{equation*}
 As pointed out before, $X_t$ satisfies Equation \eqref{eq:X}, from which we can deduce that  $X_t$ is in $\bigl(C^{0,\ep}(\R^2)\bigr)^2$ for all $t\geq0,$ and thus $\gamma_t \in C^{1,\ep}(\mathbb{S}^1; \R^2)$. 


\subsection{The N-D case}

If we take data $\theta_0$ and $u_0$ fulfilling the hypotheses of Corollary \ref{cor:striated_BsnqN}
then  \eqref{obs1:cor_dXBsnq} and standard embedding ensure that
\begin{equation*} 
\theta_0 \in  B^{0}_{N,1}(\R^N)\quad\hbox{and}\quad
 (\Omega_0)^i_j \in B^{\frac{1}{p}}_{p,\infty}(\R^N) \hookrightarrow B^{\frac{N}{p}-2}_{p,1}(\R^N) ~~\mbox{ as long as }~ p > \frac{N-1}{2}\cdotp
\end{equation*}
Since in addition $u_0^i \in L^N(\R^N)$ (see \eqref{obs2:cor_dXBsnq}), we have 
$u_0^i \in B^{\frac{N}{p}-1}_{p,1}(\R^N)$ as may be seen through the following decomposition
$$
u_0^i=\Delta_{-1}u_0^i-\sum_{j}({\rm Id}-\Delta_{-1})(-\Delta)^{-1}\d_j(\Omega_0)^i_j,
$$
and the fact that $({\rm Id}-\Delta_{-1})(-\Delta)^{-1}\d_j$ is a homogeneous multiplier of degree $-1$
away from a neighborhood of the origin. Indeed, we have
\begin{equation}\label{es:u0_dXBnsq}
\|u_0\|_{B^{\frac{N}{p}-1}_{p,1}} \lesssim \|\Omega_0\|_{L^{\frac{N}{2}}} + \|\Omega_0\|_{B^{\frac{1}{p}}_{p,1}}, ~~~\forall~p >\frac{N-1}{2}.
\end{equation}
Therefore, if  $m_1$ and $m_2$  in Corollary \ref{cor:striated_BsnqN} are sufficiently small, then we may apply the first part of Theorem \ref{thm:dXBsnqN} and get a unique global solution $(\theta, u)$ for System \eqref{eq:BsnqN}, satisfying \eqref{es:lorentz_BsnqN}, 
\eqref{es:Up_BsnqN}, \eqref{es:theta_BsnqN} and \eqref{es:uPi_BsnqN}.
  \medbreak
  
  In order to propagate the regularity of the temperature patch,  we shall borrow the ideas of  \cite{Dan1999, GS-R1995} pertaining to the standard  vortex patch problem. Firstly, we claim that it is sufficient to verify that $\d_{X_0} \Omega_0$ has regularity of $\sC^{\ep-3}$ in order to gain $\d_{X_0} u_0 \in  \bigl( \sC^{\ep-2}(\R^N) \bigr)^N,$ for any vector field $X_0$ in $\bigl(\widetilde{\sC}^\ep(\R^N) \bigr)^N.$ Indeed, Lemma \ref{lemma:ce3} yields for $\frac{N}{p}+\ep > 1$
\begin{equation*}
\| \d_{X_0} u_0 \|_{\sC^{\ep-2}} \lesssim \|\cT_{X_0}u_0\|_{\sC^{\ep-2}} + \|X_0\|_{\widetilde{\sC}^\ep} \|u_0\|_{B^{\frac{N}{p}-1}_{p,1}}.
\end{equation*}
Then by Biot-Savart Law, we have the following identity
\begin{eqnarray*}
\cT_{X_0}u_0^i & =& \sum_{j} T_{X_0^k}\d_k \Delta^{-1}\d_j (\Omega_0)^i_j \\
               & =& \sum_{j} \bigl(\Delta^{-1}\d_j \psi_{N_0}(D)\bigr) \cT_{X_0} (\Omega_0)^i_j + [T_{X_0^k}, \Delta^{-1} \d_j \psi_{N_0}(D)] \d_k (\Omega_0)^i_j,
\end{eqnarray*} 

where $\psi_{N_0}$ is some suitable (smooth) cut-off function away from $0.$ 
From the above identity and  Lemma \ref{lemma:ce1}, we deduce  that
$$
\|\cT_{X_0} u_0\|_{\sC^{\ep-2}} \lesssim \|\cT_{X_0}\Omega_0\|_{\sC^{\ep-3}}+
\|X_0\|_{\widetilde{\sC}^{\ep}}\|\Omega_0\|_{B^{\frac{1}{q}}_{q,\infty}}\quad\hbox{for }\ q\geq N-1,
$$
whence, thanks to  Lemma \ref{lemma:ce3} and \eqref{es:u0_dXBnsq},
\begin{equation}\label{es:dXu_PdXOmega}
\|\d_{X_0} u_0\|_{\sC^{\ep-2}} \lesssim \|X_0\|_{\widetilde{\sC}^{\ep}}(\|\Omega_0\|_{L^{\frac{N}{2}}} + \|\Omega_0\|_{B^{\frac{1}{q}}_{q,\infty}} )+ \|\d_{X_0} \Omega_0\|_{\sC^{\ep-3}}.
\end{equation}
 \medbreak
  
  Now it is easy construct the first family of vector fields $\{ E_j \}_{j=1}^N$ according to \eqref{es:dXu_PdXOmega} and Theorem \ref{thm:dXBsnqN}  as follows. Let  $\{e_1,\cdots,e_N\}$  be the canonical basis of $\R^N$ and assume (with no loss of generality) that the cut-off function $\chi_0$
defined at the beginning of this section is supported in the domain $J_0.$
Define $E_j$ be the solution to 
\begin{equation*}
\left\{
\begin{array}{l}
\d_t E_j +u\cdot \nabla E_j = \d_{E_j} u, \\
 E_j|_{t=0}= E_{j,0}:=\chi_0 \, e_j. \\
\end{array}
\right.
\end{equation*}
Because $\nabla\theta_0\in \big( B^{\frac{1}{q}-1}_{q,\infty}(\R^N)\big)^N$ for all $q\in[1,+\infty],$
we have $\nabla \theta_0 \in \big(\sC^{\ep-2}(\R^N)\big)^N$ by embedding (just take $q\geq\frac{N-1}{1-\ep}$).
Of course, as $ E_{j,0}$ is smooth and compactly supported, and as any nonhomogeneous Besov space is stable by multiplication by functions in $\cC^\infty_c,$ one can conclude that  $\d_{E_{j,0}} \theta_0 \in \sC^{\ep-2}(\R^N).$ Taking $X_0=E_{j,0}$ in \eqref{es:dXu_PdXOmega}, the last term $\d_{E_{j,0}} \Omega_0$ on the r.h.s. vanishes by the definition of $J_0$. Hence   Theorem \ref{thm:dXBsnqN} applies and  one can conclude that   $E_j \in \big(L^\infty _{loc}(\R_+ ;\widetilde{\sC}^{\ep})\big)^N$ for all $j=1,\cdots,N.$
\medbreak
  
  Next, let us introduce some notations before our construction of second family of vector fields $\{Y_{\lambda}\}$. For any family $\cP= (P_1,...,P_{N-1})$ of $N-1$ vectors of $\R^N,$ we define $\wedge\cP \equiv P_1 \wedge \cdots \wedge P_{N-1}$ to be  the unique vector of  $\R^N$ satisfying
\begin{equation*}
\wedge\cP\cdot Q = \det (P_1,...,P_{N-1},Q), ~~~\forall Q \in \R^N.
\end{equation*}
Assume that $\cX=(X_1,\cdots,X_{N-1})$ where the time dependent vector fields $X_j$ satisfy System \eqref{eq:X}. Thanks to $\div u=0$, the vector field $\wedge\cX$ satisfies the equation,
\begin{equation}\label{eq:wedgeX}
\d_t \wedge\!\cX + u \cdot \nabla \wedge\!\cX = - (\nabla u)^{tr} \cdot  (\wedge\cX).
\end{equation}
Set $\displaystyle\Lambda = \{ \lambda = (\lambda_1,...,\lambda_{N-1}): 1 \leq \lambda_1<\cdots<\lambda_{N-1}\leq N \}.$
For any $\lambda \in \Lambda$, we define $Y_{\lambda}$ to be the solution~of 
\begin{equation*}
\left\{
\begin{array}{l}
\d_t Y_\lambda +u\cdot \nabla Y_\lambda = \d_{Y_\lambda} u, \\
 Y_{\lambda}|_{t=0}=e_{\lambda_1}\wedge \cdots \wedge e_{\lambda_{N-2}} \wedge \nabla f_0. \\
\end{array}
\right.
\end{equation*}
Extend $\lambda$ to  a permutation  $\bar{\lambda}:=( \lambda_1,\cdots,\lambda_{N-1},  \lambda_N)$ 
of $\{1,\cdots,N\},$ with  signature  $\tau(\bar{\lambda}).$
By definition of $\bar\lambda,$  we have 
\begin{equation*}
Y_{\lambda,0} \equiv  Y_{\lambda}|_{t=0} = \tau(\bar{\lambda})(\d_{\lambda_{N-1}}f_0\, e_N - \d_{\lambda_N}f_0\, e_{N-1}),
\end{equation*}
which is divergence free in $\big(\sC^{\ep}(\R^N)\big)^N.$ Because 
$\widetilde{Y}_{\lambda,0}$ is a (non-degenerate)  tangential vector field along $\d \cD_0,$ 
the function $\d_{Y_{\lambda,0}} \theta_0$ vanishes.  In addition, from the fact
 $$\mbox{Supp} \nabla (\Omega_0)^i_j \cap \mbox{Supp} \chi_0 = \emptyset  ~~\mbox{  and  }~~ \nabla f_0 = \chi_0 \nabla f_0,$$ 
we gather that $\d_{Y_{\lambda,0}} \Omega_0$ vanishes for any $\lambda.$ Applying  \eqref{es:dXu_PdXOmega} 
with  $X_0 = Y_{\lambda,0}$ and Theorem \ref{thm:dXBsnqN}, we get $Y_{\lambda} \in \big( L^\infty _{loc}(\R_+;{\sC}^{\ep})\big)^N$.
Note that in fact  $Y_{\lambda} \in \big( L^\infty _{loc}(\R_+;\wt{\sC}^{\ep})\big)^N$ as all those vector fields are divergence free.
\medbreak

Finally, putting together what we proved hitherto, we deduce that the vector fields $W_\lambda$ defined by  
\begin{equation*}
W_{\lambda} := E_{\lambda_1}\wedge \cdots \wedge E_{\lambda_{N-2}} \wedge Y_{\lambda},
\end{equation*}
are in $\big(L^\infty _{loc}(\R_+; \sC^{\ep})\big)^N.$
Furthermore, according to the definition of $E_j$ and $Y_{\lambda},$ they satisfy Equation \eqref{eq:wedgeX}.
The expression of $Y_{\lambda,0}$ and the fact that  $\nabla f_0 = \chi_0 \nabla f_0$ imply that
\begin{equation*}
W_{\lambda,0} \equiv  W_{\lambda}|_{t=0} = - (\d_{\lambda_{N-1}}f_0 e_{N-1} + \d_{\lambda_N}f_0 e_{N}),
\end{equation*}
and summing up over $\lambda\in\Lambda$ thus yields
\begin{equation*}
\sum_{\lambda \in \Lambda} W_{\lambda,0} = -(N-1) \nabla f_0.
\end{equation*}
Therefore
\begin{equation*}
W:= -\frac{1}{N-1} \sum_{\lambda \in \Lambda} W_{\lambda}   \in \big( L^\infty _{loc}(\R_+;\sC^{\ep})\big)^N,
\end{equation*}
coincides with $\nabla f$ thanks to \eqref{eq:wedgeX} and  to the uniqueness of solutions for the equation satisfied by $\nabla f $, namely,
\begin{equation*}
\left\{
\begin{array}{l}
\d_t \nabla f +u\cdot \nabla (\nabla f) =  - (\nabla u)^{tr} \nabla f, \\
 \nabla f|_{t=0}=\nabla f_0. \\
\end{array}
\right.
\end{equation*}
This completes the proof of the conservation of $C^{1,\ep}$ regularity for domain $\cD_t$ in 
dimension $N\geq3.$


\begin{appendix}

\section{Commutator Estimates}

We here prove some  commutator estimates that were needed in the previous
sections. They strongly rely on continuity results in Besov spaces for the paraproduct and remainder operators, and on  the following classical result (see e.g.  \cite{BCD2011}, Chap. 2).
\begin{lemma}\label{lemma:ce1} 
Let $A:\R^N\to\R$ be a smooth function, homogeneous of degree $m$ away from  a neighborhood of $0$. Let $(\ep,s,p,r,p_1,p_2) \in ]0,1[ \times \R \times [1, \infty]^4$ with $\frac{1}{p}=\frac{1}{p_1}+\frac{1}{p_2}\cdotp$ Then there exists a constant $C$, depending only on $s, \ep, N$ and $A$ such that,
$$\|[T_g, A(D)]u\|_{B^{s-m+\ep}_{p,r}} \leq C \|\nabla g\|_{B^{\ep-1}_{p_1,\infty}}\|u\|_{B^{s}_{p_2,r}}.$$
\end{lemma}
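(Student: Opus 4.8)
The plan is to reduce the commutator, after a Littlewood--Paley decomposition, to a single kernel estimate. First I would expand, using that Fourier multipliers commute (so that $\Delta_jA(D)=A(D)\Delta_j$),
$$[T_g,A(D)]u=\sum_{j\geq-1}R_j,\qquad R_j:=S_{j-N_0}g\;A(D)\Delta_j u-A(D)\bigl(S_{j-N_0}g\,\Delta_j u\bigr).$$
Since $S_{j-N_0}g$ is spectrally supported in a ball of radius $\lesssim 2^{j-N_0}$ whereas $A(D)\Delta_j u$ is supported in a dyadic annulus of size $2^j$ once $j$ is large (and in a fixed ball for the finitely many small $j$), each $R_j$ is spectrally supported in an annulus $2^j\widetilde{\mathcal C}$, provided the fixed integer $N_0$ is chosen large enough. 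Hence $\Delta_k\sum_jR_j$ only involves the $j$ with $|k-j|$ bounded, and by almost-orthogonality it suffices to prove
$$2^{j(s-m+\ep)}\|R_j\|_{L^p}\lesssim c_j\,\|\nabla g\|_{B^{\ep-1}_{p_1,\infty}}\,\|u\|_{B^{s}_{p_2,r}},\qquad (c_j)_{j\geq-1}\in\ell^r,\ \ \|(c_j)\|_{\ell^r}\leq 1.$$

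To get the kernel estimate I would fix a cut-off $\widetilde\varphi$ equal to $1$ on $\mathcal C$ and supported in a slightly larger annulus, and write $A(D)\Delta_j=\bigl(A\,\widetilde\varphi(2^{-j}\cdot)\bigr)(D)\,\Delta_j$. For $j$ beyond the threshold $j_0$ above which $A$ is homogeneous of degree $m$, the scaling $A(\xi)\widetilde\varphi(2^{-j}\xi)=2^{jm}(A\widetilde\varphi)(2^{-j}\xi)$ shows that $A(D)\Delta_j$ is the convolution with $k_j:=2^{j(m+N)}\check h(2^j\cdot)$, where $h:=A\widetilde\varphi\in C^\infty_c$, so $\check h$ is Schwartz. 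The commutator structure then gives the pointwise identity
$$R_j(x)=\int k_j(x-y)\,\bigl(S_{j-N_0}g(x)-S_{j-N_0}g(y)\bigr)\,\Delta_j u(y)\,dy,$$
and writing the difference of values of $S_{j-N_0}g$ as $(x-y)\cdot\int_0^1\nabla S_{j-N_0}g\bigl(x-(1-\tau)(x-y)\bigr)\,d\tau$ turns this into a convolution-type expression. Minkowski's integral inequality followed by the generalized Hölder inequality with $\frac1p=\frac1{p_1}+\frac1{p_2}$ then yields
$$\|R_j\|_{L^p}\lesssim\Bigl(\int|z|\,|k_j(z)|\,dz\Bigr)\,\|\nabla S_{j-N_0}g\|_{L^{p_1}}\,\|\Delta_j u\|_{L^{p_2}}\lesssim 2^{j(m-1)}\,\|\nabla S_{j-N_0}g\|_{L^{p_1}}\,\|\Delta_j u\|_{L^{p_2}}.$$

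The last ingredient is the low-frequency bound $\|\nabla S_{j-N_0}g\|_{L^{p_1}}\lesssim 2^{j(1-\ep)}\|\nabla g\|_{B^{\ep-1}_{p_1,\infty}}$, obtained by summing $\|\Delta_k\nabla g\|_{L^{p_1}}\lesssim 2^{k(1-\ep)}\|\nabla g\|_{B^{\ep-1}_{p_1,\infty}}$ over $k\leq j-N_0-1$; this is where the hypothesis $\ep<1$ is used, so that the geometric series converges to a constant times its top term. Combining this with $\|\Delta_j u\|_{L^{p_2}}=2^{-js}c_j\|u\|_{B^{s}_{p_2,r}}$, the powers of $2$ multiply to $2^{j(m-1)+j(1-\ep)-js}=2^{j(m-\ep-s)}$, which is exactly the factor needed to absorb the weight $2^{j(s-m+\ep)}$; taking the $\ell^r$ norm in $j$ then finishes the argument. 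It remains only to dispose of the finitely many indices $-1\leq j<j_0$: there one writes $A(D)\Delta_j=(A\widetilde\Phi)(D)\Delta_j$ for a fixed $\widetilde\Phi\in C^\infty_c$, the inverse Fourier transform of $A\widetilde\Phi$ is again Schwartz, and the same computation gives a uniform bound, the loss of exact scaling being irrelevant since the index set is finite.

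I do not expect a genuine obstacle --- this is a routine paraproduct--commutator estimate. The only points requiring care are keeping the dyadic book-keeping (the choice of $N_0$ and the threshold $j_0$) precise enough that the $\ell^r$ summation closes, and noticing that, because $\nabla g$ merely lies in the \emph{negative}-index space $B^{\ep-1}_{p_1,\infty}$, the gain of $\ep$ derivatives comes from the low-frequency bound on $\nabla S_{j-N_0}g$ rather than from any pointwise mean-value control of $g$; this is also where the constraint $0<\ep<1$ genuinely enters.
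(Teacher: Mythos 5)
Your argument is correct, and it is essentially the classical proof: the paper does not prove this lemma at all (it quotes it as a known result from \cite{BCD2011}, Chap.~2), and your decomposition $R_j=S_{j-N_0}g\,A(D)\Delta_ju-A(D)(S_{j-N_0}g\,\Delta_ju)$, the first-order Taylor expansion of $S_{j-N_0}g$ against the rescaled kernel $k_j$, the bound $\int|z||k_j(z)|\,dz\lesssim 2^{j(m-1)}$, and the low-frequency estimate $\|\nabla S_{j-N_0}g\|_{L^{p_1}}\lesssim 2^{j(1-\ep)}\|\nabla g\|_{B^{\ep-1}_{p_1,\infty}}$ (using $\ep<1$), combined via H\"older with $\frac1p=\frac1{p_1}+\frac1{p_2}$, is exactly the standard route, here correctly adapted to the two-exponent setting. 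The only cosmetic point is the identity $A(\xi)\widetilde\varphi(2^{-j}\xi)=2^{jm}(A\widetilde\varphi)(2^{-j}\xi)$: since $A$ is homogeneous only away from a neighborhood of $0$, one should either replace $A$ by its homogeneous extension (which does not change $A(D)\Delta_j$ for $j\geq j_0$) or observe that $2^{-jm}A(2^j\cdot)\widetilde\varphi$ is independent of $j$ for $j\geq j_0$; this does not affect the rest of the argument.
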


\begin{rmk} A similar inequality  holds for time-dependent distributions, as may be seen by   following 
the proof of  Lemma \ref{lemma:ce1}, treating the time as a parameter, and applying (time) H\"older inequality 
when appropriate. 
For example, for any  $(\rho,\rho_1,\rho_2) \in [1,\infty]^3$ with $\frac{1}{\rho}=\frac{1}{\rho_1}+\frac{1}{\rho_2},$
and $(\ep,s,p,r,p_1,p_2)$ as above, we  have
\begin{equation}\label{es:ce2} 
\|[T_g, A(D)]u\|_{\widetilde{L}^{\rho}_T (B^{s-m+\ep}_{p,r})} \leq C \|\nabla g\|_{\widetilde{L}^{\rho_1}_{T} (B^{\ep-1}_{p_1,\infty})}\|u\|_{\widetilde{L}^{\rho_2}_T (B^{s}_{p_2,r})}.
\end{equation} 
\end{rmk}
\begin{lemma}\label{lemma:ce3}
Let the vector field $X$ be  in $\big(\sC^{\ep}(\R^N)\big)^N$ for some $\ep\in]0,1[,$  and $f$ be in 
$B^{s}_{p,r}$ with $(s,p,r) \in ]-\infty, 1+\frac{N}{p}[ \times [1,\infty]^2$. Then we have:
  \begin{enumerate}
  \item If in addition  $s + \ep >1$ or $\{ s+\ep=1, r=1 \}$, then
  $$\|\cT_X f - \d_X f\|_{\sC^{s+\ep-\frac{N}{p}-1}} \lesssim \|X\|_{\sC^{\ep}} \|\nabla f\|_{B^{s-1}_{p,r}}.$$
  The previous estimate holds in the case $s =1+\frac{N}{p},$ if replacing $\|\nabla f\|_{B^{\frac{N}{p}}_{p,r}}$ by $\|\nabla f\|_{B^{\frac Np}_{p,\infty}\cap L^{\infty}}$.
  \item If in addition  $\div X \in \sC^\ep(\R^N)$ then  for $s+\ep>0$ or $\{ s+\ep=0, r=1\}$,  we have 
  $$\|\cT_X f - \d_X f\|_{\sC^{s+\ep-\frac{N}{p}-1}} \lesssim \|X\|_{{\widetilde{\sC}}^\ep}  \|f\|_{B^{s}_{p,r}}.$$

  \end{enumerate} 
\begin{proof}
One may start from the Bony decomposition as follows: 
\begin{align*}
\d_X f &= \cT_X f + T_{\d_k f} X^k +R(X^k,\d_k f)\\
  &= \cT_X f + T_{\d_k f} X^k +\d_k R(X^k, f)-R(\div X, f).
\end{align*} 
Then applying standard  continuity results for  paraproduct and remainder operators 
(see e.g.  \cite{BCD2011}, Chap. 2)  yields the desired inequalities.
 \end{proof}
\end{lemma}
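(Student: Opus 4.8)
The plan is to reduce everything to Bony's decomposition \eqref{eq:bony} together with a handful of standard continuity properties of the paraproduct and remainder. Writing $\d_X f=X^k\d_k f=T_{X^k}\d_k f+T_{\d_k f}X^k+R(X^k,\d_k f)$ and recalling that $\cT_X f=T_{X^k}\d_k f$, I am left with estimating
$$
\d_X f-\cT_X f=T_{\d_k f}X^k+R(X^k,\d_k f)
$$
in $\sC^{s+\ep-\frac Np-1}$. The paraproduct term will be handled the same way in both parts of the statement; it is the remainder that forces the dichotomy $s+\ep>1$ versus $s+\ep>0$, the second (weaker) threshold being reachable only thanks to the extra information $\div X\in\sC^\ep$.

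For the paraproduct $T_{\d_k f}X^k$, the idea is first to push $\nabla f$ into a H\"older space of negative order via the embedding $B^{s-1}_{p,r}\hookrightarrow\sC^{s-1-\frac Np}$ (the target index being negative since $s<1+\frac Np$), and then to invoke continuity of the paraproduct with a first factor of negative regularity, which gives
$$
\|T_{\d_k f}X^k\|_{\sC^{s+\ep-\frac Np-1}}\lesssim\|\nabla f\|_{\sC^{s-1-\frac Np}}\|X\|_{\sC^\ep}\lesssim\|\nabla f\|_{B^{s-1}_{p,r}}\|X\|_{\sC^\ep}.
$$
Since $\|\nabla f\|_{B^{s-1}_{p,r}}\lesssim\|f\|_{B^{s}_{p,r}}$, this bound also fits the conclusion of part (2). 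In the borderline case $s=1+\frac Np$ the first factor has regularity index $0$ and the paraproduct estimate additionally requires it to lie in $L^\infty$; this is exactly why $\|\nabla f\|_{B^{\frac Np}_{p,\infty}\cap L^\infty}$ appears in part (1).

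For the remainder I would separate the two regimes. In part (1), $s+\ep>1$ means $\ep+(s-1)>0$, so I can estimate $R(X^k,\d_k f)$ directly: continuity of the remainder on $\sC^\ep\times B^{s-1}_{p,r}$ gives $R(X^k,\d_k f)\in B^{s+\ep-1}_{p,r}\hookrightarrow\sC^{s+\ep-\frac Np-1}$ with the sharp bound $\lesssim\|X\|_{\sC^\ep}\|\nabla f\|_{B^{s-1}_{p,r}}$; here one must keep the remainder in this form and \emph{not} integrate by parts, for otherwise the right-hand side would degrade to $\|f\|_{B^{s}_{p,r}}$. In part (2), $\ep+(s-1)$ need not be positive, so the direct estimate fails and I would rewrite the remainder in divergence form, $R(X^k,\d_k f)=\d_k R(X^k,f)-R(\div X,f)$. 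Since $s+\ep>0$, both $R(X^k,f)$ and $R(\div X,f)$ belong to $B^{s+\ep}_{p,r}$; hence $\d_k R(X^k,f)\in B^{s+\ep-1}_{p,r}\hookrightarrow\sC^{s+\ep-\frac Np-1}$ with norm $\lesssim\|X\|_{\sC^\ep}\|f\|_{B^{s}_{p,r}}$, and $R(\div X,f)\in B^{s+\ep}_{p,r}\hookrightarrow\sC^{s+\ep-\frac Np}\hookrightarrow\sC^{s+\ep-\frac Np-1}$ with norm $\lesssim\|\div X\|_{\sC^\ep}\|f\|_{B^{s}_{p,r}}\le\|X\|_{\widetilde{\sC}^\ep}\|f\|_{B^{s}_{p,r}}$. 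This is precisely the step that uses $\div X\in\sC^\ep$. The endpoint cases $\{s+\ep=1,\ r=1\}$ in (1) and $\{s+\ep=0,\ r=1\}$ in (2) are dispatched by the limiting versions of these remainder estimates (the case of a remainder of regularity exactly zero, the $\ell^1$-summability in $f$ being what makes the argument go through).

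The hard part is not any individual estimate — each follows from Chapter 2 of \cite{BCD2011} — but keeping the bookkeeping straight: recognizing that in part (2) estimating $R(X^k,\d_k f)$ naively costs a derivative one cannot afford, hence the divergence-form rewriting, which in turn is exactly what activates the hypothesis $\div X\in\sC^\ep$; and, conversely, recognizing that in part (1) one must resist that same rewriting so as to retain the sharper $\|\nabla f\|_{B^{s-1}_{p,r}}$ on the right-hand side. Once these two observations are made, the proof reduces to the standard continuity of $T$ and $R$ combined with the Besov embeddings $B^{\sigma}_{p,r}\hookrightarrow\sC^{\sigma-\frac Np}$.
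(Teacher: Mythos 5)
Your proposal is correct and follows essentially the same route as the paper: the Bony decomposition $\d_X f=\cT_X f+T_{\d_k f}X^k+R(X^k,\d_k f)=\cT_X f+T_{\d_k f}X^k+\d_k R(X^k,f)-R(\div X,f)$, with the direct remainder estimate used for part (1) and the divergence-form rewriting (activating $\div X\in\sC^\ep$) used for part (2), everything else being standard continuity of $T$ and $R$ plus the embedding $B^{\sigma}_{p,r}\hookrightarrow\sC^{\sigma-\frac Np}$. Your write-up merely makes explicit the bookkeeping that the paper leaves to the reader, including the correct treatment of the endpoint and borderline cases.
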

If the integer $N_0$ in the definition of Bony's paraproduct and remainder  is large enough (for instance $N_0 =4$ does), 
then the following fundamental lemma holds.
\begin{lemma}\label{lemma:key_Bnsq}
Let $(\ep,s,s_k,p, p_k, r,r_k) \in ]0,1[ \times \R^2 \times [1, \infty]^4$ for $k=1,2$ satisfying
\begin{equation*}
\frac{1}{p} = \frac{1}{p_1} + \frac{1}{p_2}\ \hbox{ and }\ \frac{1}{r} = \frac{1}{r_1} + \frac{1}{r_2}\cdotp
\end{equation*} 

\begin{enumerate}
\item If $s_2  < 0$ then we have 
\begin{equation}
\|\cT_X T_g f - T_g \cT_X f - T_{ \cT_X g}f \|_{B^{s_1+s_2+\ep-1}_{p,r}} \leq C \|X\|_{\sC^{\ep}} \|f\|_{B^{s_1}_{p_1,r_1}}\|g\|_{B^{s_2}_{\infty,r_2}}.
\end{equation}
The above inequality still  holds in the limit case $(s_2, r_2)=(0, \infty),$ if one  replaces $\|g\|_{B^{0}_{\infty,\infty}}$ by $\|g\|_{L^\infty}$.
\item If $s_1+s_2+\ep-1 >0$ then  we have 
\begin{equation*}
\|\cT_X R(f,g)-R(\cT_X f, g)-R(f, \cT_X g)\|_{B^{s_1+s_2+\ep-1}_{p,r}} \leq C \|X\|_{\sC^\ep} \|f\|_{B^{s_1}_{p_1,r_1}}\|g\|_{B^{s_2}_{p_2,r_2}}.
\end{equation*} 
The above inequality still holds in the limit case $s_1+s_2+\ep -1 =0,$ $r=\infty$ and  $\frac{1}{r_1} + \frac{1}{r_2}=1.$
\end{enumerate}

\end{lemma}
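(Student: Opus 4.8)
The plan is to prove both estimates by expanding every operator into its nonhomogeneous Littlewood--Paley blocks and matching the resulting terms scale by scale. The organizing idea is that, because $\partial_k$ obeys the exact Leibniz rule, the \emph{principal parts} of the three compositions --- those obtained by replacing the para-vector field $\cT_X=T_{X^k}\partial_k$ by the genuine derivative $X^k\partial_k$ and by freezing each low-frequency cut-off at the scale of the high-frequency block on which it acts --- cancel: for (i) the principal part of $\cT_X(T_gf)$ equals that of $T_g(\cT_Xf)$ plus that of $T_{\cT_Xg}f$, and for (ii) the principal part of $\cT_XR(f,g)$ equals that of $R(\cT_Xf,g)$ plus that of $R(f,\cT_Xg)$ (there one simply factors $X^k$ out of all three terms). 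Only two types of error survive: commutators $[\Delta_j,S_{j-N_0}X^k]$, bounded by $C\,2^{-j\ep}\|X\|_{\sC^\ep}$ through the classical kernel estimate that also underlies Lemma~\ref{lemma:ce1}; and low-frequency cut-off discrepancies $\bigl(S_{j-N_0}-S_{q-N_0}\bigr)X^k$ with $q\lesssim j$, bounded by $C\,2^{-q\ep}\|X\|_{\sC^\ep}$ since $X\in\sC^\ep$ with $0<\ep<1$. In every such term the factor carrying the H\"older regularity of $X$ sits precisely where $\ep$ derivatives are gained, which produces the shift from $B^{s_1+s_2-1}_{p,r}$ to $B^{s_1+s_2+\ep-1}_{p,r}$. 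The $\ell^r$-bookkeeping with $\frac1r=\frac1{r_1}+\frac1{r_2}$ follows from H\"older's inequality in the summation index, all non-resonant leftovers are handled by the standard continuity of $T$ and $R$ (Chap.~2 of \cite{BCD2011}), and taking $N_0$ large (e.g.\ $N_0=4$) ensures the spectral supports separate so that the matchings are exact.

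For part (i), I would write, using Leibniz on each block,
$$\cT_X(T_gf)=\sum_j T_{X^k}\,\partial_k\bigl(S_{j-N_0}g\,\Delta_jf\bigr),$$
note that $\partial_k(S_{j-N_0}g\,\Delta_jf)$ is spectrally supported in an annulus of size $2^j$, so $T_{X^k}$ reaches it only through the finitely many blocks near that scale, and replace $S_{\cdot-N_0}X^k$ by $S_{j-N_0}X^k$ there at the cost of commutator and finite-gap terms, all carrying a $2^{-j\ep}$ factor. After this reduction $\cT_X(T_gf)$, $T_g(\cT_Xf)$ and the leading sum of $T_{\cT_Xg}f$ become, up to such errors, $\sum_j S_{j-N_0}X^k\,\partial_k(S_{j-N_0}g\,\Delta_jf)$, $\sum_j S_{j-N_0}g\,S_{j-N_0}X^k\,\partial_k\Delta_jf$ and $\sum_j S_{j-N_0}X^k\,\partial_kS_{j-N_0}g\,\Delta_jf$; since multiplication by functions commutes, the defect then collapses to the discrepancy between $S_{j-N_0}(\cT_Xg)$ and $S_{j-N_0}X^k\,\partial_kS_{j-N_0}g$, namely $\sum_j\sum_{q<j-N_0}\bigl(S_{j-N_0}-S_{q-N_0}\bigr)X^k\,\partial_k\Delta_qg\cdot\Delta_jf$, plus $2^{-j\ep}$-gaining terms. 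Estimating $\|(S_{j-N_0}-S_{q-N_0})X^k\|_{L^\infty}\lesssim 2^{-q\ep}\|X\|_{\sC^\ep}$ and summing against the scales $q$ of $g$, the condition $g\in B^{s_2}_{\infty,r_2}$ with $s_2<0$ --- which forces $s_2+\ep<1$ --- makes the series $\sum_{q<j}2^{q(1-s_2-\ep)}$ dominated by its top term $\sim 2^{j(1-s_2-\ep)}$, which is exactly the net $2^{-j\ep}$ improvement over the raw $B^{s_1+s_2-1}_{p,r}$ bound; the borderline $(s_2,r_2)=(0,\infty)$ is the endpoint of that geometric sum, closed by using $\|g\|_{L^\infty}$ in place of $\|g\|_{B^0_{\infty,\infty}}$.

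For part (ii) the scheme is identical, but the building blocks $\Delta_jf\,\tDelta_jg$ of the remainder are only supported in $\{|\xi|\lesssim 2^j\}$, so their full dyadic content must be tracked. After factoring $X^k$ out of the principal parts and performing Leibniz, the defect reduces, up to $2^{-j\ep}$-gaining commutator and finite-gap terms, to $\sum_j\sum_{j'\lesssim j}\bigl(S_{j'-N_0}-S_{j-N_0}\bigr)X^k\,\partial_k\Delta_{j'}\bigl(\Delta_jf\,\tDelta_jg\bigr)$. The key point is that at output frequency $2^{j'}$ the derivative contributes only $2^{j'}$ (not $2^j$) while the cut-off gap contributes $2^{-j'\ep}$, and $\sum_{j'\lesssim j}2^{j'(1-\ep)}\sim 2^{j(1-\ep)}$ because $\ep<1$; this gives $2^{-j(s_1+s_2+\ep-1)}$ for the $j$-th piece. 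Because these pieces are only low-frequency-localized, reconstructing them in $B^{s_1+s_2+\ep-1}_{p,r}$ requires $s_1+s_2+\ep-1>0$ --- precisely the hypothesis --- while the limiting case $s_1+s_2+\ep-1=0$, $r=\infty$, $\frac1{r_1}+\frac1{r_2}=1$ is the familiar endpoint of the remainder estimate.

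I expect the main obstacle to be purely the bookkeeping: for each of the many error terms thrown up by the Leibniz and commutator expansions one must check that the factor bearing $\|X\|_{\sC^\ep}$ genuinely lands in a commutator or a cut-off difference --- where the $\ep$-gain is available --- and never in a bare multiplicative slot, and one must verify that every dyadic geometric series so produced is summed in the correct direction. This is exactly where the sign conditions ($s_2<0$ in (i), $s_1+s_2+\ep-1>0$ in (ii)) and the largeness of $N_0$ are consumed; once the terms are organized correctly, each individual estimate is routine paradifferential calculus.
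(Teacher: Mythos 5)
Your proposal is correct and follows essentially the same route as the paper's appendix proof: there the principal-part cancellation you describe is implemented through Chemin's generalized Leibniz formula for $\cT_X$, whose error terms are precisely your two families --- the cut-off discrepancies of $X$ across scales (the terms $T_{2,j}$, $R_{2,j}$, summed using $\ep<1$, $s_2<0$, resp.\ $s_1+s_2+\ep-1>0$ for the ball-supported reconstruction) and commutators of $T_{X^k}$ with the spectral truncations (the terms $T_{1,j},T_{3,j},T_{4,j}$ and $R_{1,j},R_{3,j},R_{4,j}$, bounded by the classical kernel estimate, Lemma 2.100 of \cite{BCD2011}). The only point to make explicit when you carry out part (i) is that your commutator family must contain $[T_{X^k},S_{j-N_0}]\d_kg$ (not merely $[\Delta_j,S_{j-N_0}X^k]$), since the boundary blocks of $g$ at frequency $\approx 2^{j-N_0}$ in the comparison of $S_{j-N_0}(\cT_Xg)$ with $S_{j-N_0}X^k\,\d_kS_{j-N_0}g$ are absorbed exactly by this commutator (the paper's $T_{4,j}$), which the same kernel estimate controls because $\ep<1$.
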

\begin{proof}
The key to our proof is a generalized Leibniz formula for the para-vector field operators which was derived by J.-Y. Chemin in \cite{Che1988}. Define the following Fourier multipliers for $k \in {1,...,N}$
\begin{equation*}\Delta_{k,j} := 
\begin{cases} 
\varphi_{k}(2^{-j}D) & j \geq 0,\quad\ \,\hbox{ with }\ \varphi_{k}(\xi):= i \xi_k \varphi(\xi),\\
\chi_{k}(D) & j=-1,\quad\hbox{with }\ \chi_{k}(\xi):= i \xi_k \chi(\xi),\\
0 & j \leq -2.
\end{cases}
\end{equation*}
Then we have
\begin{align*}
 \cT_X T_g f & = \sum_{j \geq -1} (S_{j-N_0} g \cT_{X}\Delta_{j} f + \Delta_j f \cT_X S_{j-N_0} g) + \sum_{j\geq -1}(T_{1,j} + T_{2,j})\\
 &= T_g \cT_X f +T_{ \cT_X g}f + \sum_{\genfrac{}{}{0pt}{}{j \geq -1}{\alpha=1,\ldots,4}} T_{\alpha, j},
\end{align*}
where 
\begin{align*}
T_{1,j} &:=  \sum_{\genfrac{}{}{0pt}{}{j \leq j' \leq j+1}{j-N_0-1 \leq j''\leq j'-N_0-1}} 2^{j'}  \Delta_{j''}X^k \big( \Delta_{k,j'} (\Delta_j f S_{j-N_0} g)-\Delta_{k,j'}\Delta_j f S_{j-N_0}g \big),\\
T_{2,j} &:= \sum_{\genfrac{}{}{0pt}{}{j'\leq j- 2}{j'-N_0 \leq j''\leq j-N_0 -2}} 2^{j'}  \Delta_{j''}X^k   (\Delta_j f)  \Delta_{k,j'} S_{j-N_0}  g,\\
T_{3,j} & := S_{j-N_0} g [T_{X^k}, \Delta_j] \d_k f,\\
T_{4,j} & := \Delta_j f [T_{X^k}, S_{j-N_0}] \d_k g.
\end{align*}
Bounding $T_{1,j}$ and $T_{2,j}$ is straightforward : just  use the definition 
of Besov norms. 
Lemma 2.100  of  \cite{BCD2011} allows to bound $T_{3,j}$ and $T_{4,j}$
provided  $\ep<1$. We end up with the desired inequality.
\medbreak
In order to prove the second item, let us set $A_{j,j'} := [j-N_0-1, j'-N_0-1] \cup [j'-N_0, j-N_0-2] \cap \mathbb{Z}.$
We thus have
\begin{align*}
 \cT_X R(f, g) & = \sum_{j \geq -1} (\tDelta_{j}g \cT_{X}\Delta_{j} f + \Delta_j f \cT_X \tDelta_{j} g) + \sum_{j\geq -1}(R_{1,j} + R_{2,j})\\
 &= R(\cT_X f, g)+R(f, \cT_X g) + \sum_{\genfrac{}{}{0pt}{}{j \geq -1}{\alpha=1,\ldots,4}} R_{\alpha, j},
\end{align*}
where, denoting $\tDelta_j:=\Delta_{j-N_0}+\cdots+\Delta_{j+N_0},$ 
\begin{align*}
R_{1,j} &:=  \sum_{\genfrac{}{}{0pt}{}{|j'-j| \leq N_0 + 1}{j''\in A_{j,j'}}} {\rm sgn}(j'-j+1) 2^{j'}  \Delta_{j''}X^k \big( \Delta_{k,j'} (\Delta_j f \tDelta_{j} g) - \Delta_j f \Delta_{k,j'} \tDelta_{j}g \big)\\
& \hspace{3cm}+ \sum_{\genfrac{}{}{0pt}{}{j-1 \leq j' \leq j}{j'-N_0 \leq j'' \leq j-N_0}} 2^{j'} \Delta_{j''}X^k(\Delta_{k,j'} \Delta_j f) \tDelta_{j} g,\\
R_{2,j} &:= \sum_{\genfrac{}{}{0pt}{}{j' \leq j-N_0-2}{j'-N_0 \leq j''\leq j-N_0-2}} 2^{j'}  \Delta_{j''}X^k   \Delta_{k,j'} (\Delta_j f \tDelta_{j}  g)   ,\\
R_{3,j} & := \tDelta_j g [T_{X^k}, \Delta_j] \d_k f,\\
R_{4,j} & := \Delta_j f [T_{X^k}, \tDelta_j] \d_k g.
\end{align*}
Here again, bounding $R_{1,j}$ and $R_{2,j}$ follows from the definition of
Besov norms, while Lemma 2.100 of  \cite{BCD2011} allows to bound $R_{3,j}$ and $R_{4,j}.$
\end{proof}

We are in  position to prove our key commutator estimate that
involves the convective derivative and some para-vector field evolving according to \eqref{eq:X}. 
A particular case of that estimate has been proved in \cite{Dan1997}, as it was needed to study the viscous
vortex patch problem.  
\begin{prop}\label{prop:ce_dBnsqN}
Suppose that $(\ep,p)\in (0,1)\times [1,\infty]$ with $\frac{N}{p}+\ep \geq 1 $. Consider a couple of vector fields $(X,v)$
such that   $\div v=0$ 
and $$ (X,v) \in \bigl(L^{\infty}_{loc}(\R_+; \widetilde{\sC}^{\ep}) \bigr)^N \times \big(L^{\infty}_{loc}(\R_+ ; B^{\frac{N}{p}-1}_{p,1})\cap L^{1}_{loc}(\R_+ ;B^{\frac{N}{p}+1}_{p,1})\big)^N,$$ satisfying the following equation
\begin{equation}\label{eq:prop_ce_dBnsqN}
\left\{
\begin{array}{l}
(\d_t+v\cdot \nabla)X = \d_X v,\\
X|_{t=0} = X_0.
\end{array}
\right.
\end{equation}
Then there is a constant $C$ such that:
\begin{multline}\label{es:TX}
 \|[\cT_X, \d_t+v\cdot \nabla ]v\|_{\sC^{\ep-2}} \leq C(\|X\|_{\widetilde{\sC}^\ep} \|v\|_{B^{\frac{N}{p}+1}_{p,1}} \|v\|_{B^{\frac{N}{p}-1}_{p,1}} \\+\|v\|_{\sC^{-1}}\|\cT_{X}v\|_{\sC^\ep}+\|v\|_{B^{\frac{N}{p}+1}_{p,1}}\|\cT_{X}v\|_{\sC^{\ep-2}}).
\end{multline}
 \end{prop}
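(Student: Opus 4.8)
The plan is to expand the commutator $[\cT_X,\d_t+v\cdot\nabla]v$ into a paraproduct error and a convective-commutator part, and to eliminate the leading loss of derivatives by substituting the equation \eqref{eq:prop_ce_dBnsqN} for $\d_tX+v\cdot\nabla X$. Writing $\cT_Xv=T_{X^k}\d_kv$, the operator $\d_t+v\cdot\nabla$ falls either on the coefficient $X^k$, producing a term $T_{(\d_t+v\cdot\nabla)X^k}\d_kv$, or it must be commuted through $T_{X^k}$ and $\d_k$. Thus I would first write
\[
[\cT_X,\d_t+v\cdot\nabla]v = -T_{(\d_t+v\cdot\nabla)X^k}\d_kv - [T_{X^k},v\cdot\nabla]\d_kv - T_{X^k}[\d_k,v\cdot\nabla]v.
\]
The last term is $-T_{X^k}(\d_kv^\ell\,\d_\ell v)$; since $\div v=0$ this is a genuine product of two factors in $B^{\frac Np}_{p,1}\times B^{\frac Np-1}_{p,1}$ (one derivative each on $v$), so by Bony's decomposition and standard paraproduct/remainder continuity it is bounded by $\|X\|_{\sC^\ep}\|v\|_{B^{\frac Np+1}_{p,1}}\|v\|_{B^{\frac Np-1}_{p,1}}$ in $\sC^{\ep-2}$, using $\frac Np+\ep\ge1$ to close the remainder.

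Next, for the first term I substitute $(\d_t+v\cdot\nabla)X=\d_Xv$ to get $-T_{(\d_Xv)^k}\d_kv$. Now $\d_Xv=\cT_Xv+(\d_Xv-\cT_Xv)$; by Lemma \ref{lemma:ce3}(2) (valid because $\div X\in\sC^\ep$ and $\frac Np+\ep\ge1$) the error $\d_Xv-\cT_Xv$ lies in $\sC^{2\ep+\frac Np-1}$ with norm $\lesssim\|X\|_{\wt\sC^\ep}\|v\|_{B^{\frac Np}_{p,1}}$, so $T_{(\d_Xv-\cT_Xv)^k}\d_kv$ is controlled by $\|X\|_{\wt\sC^\ep}\|v\|_{B^{\frac Np+1}_{p,1}}\|v\|_{B^{\frac Np-1}_{p,1}}$ in $\sC^{\ep-2}$ (the paraproduct of a positive-regularity factor against $\d_kv\in B^{\frac Np-1}_{p,1}$, losing one derivative through the interpolation). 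The remaining piece $-T_{(\cT_Xv)^k}\d_kv$ is a paraproduct; it is bounded by $\|\cT_Xv\|_{L^\infty}\|\d_kv\|_{\sC^{\ep-2}}\lesssim\|\cT_Xv\|_{\sC^{\ep-2}}\cdot$(something)? More carefully, $T_ab$ with $a\in\sC^{\ep-2}$ and $b\in B^{\frac Np+1}_{p,1}\hookrightarrow\sC^1$ lands in $\sC^{\ep-2}$ with norm $\lesssim\|\cT_Xv\|_{\sC^{\ep-2}}\|v\|_{B^{\frac Np+1}_{p,1}}$, which is exactly the last term on the right-hand side of \eqref{es:TX}.

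The main obstacle, and the heart of the matter, is the convective commutator $[T_{X^k},v\cdot\nabla]\d_kv=[\cT_X,v\cdot\nabla]\d_kv$ — a para-vector-field/transport commutator. Here I would invoke the machinery behind Lemma \ref{lemma:key_Bnsq}: decompose $v\cdot\nabla = T_{v^\ell}\d_\ell + (\text{remainder and }T_{\d_\ell\,\cdot}v^\ell\text{ type terms})$ via Bony, and use Chemin's generalized Leibniz formula for para-vector fields to rewrite $[\cT_X,T_{v^\ell}]\d_\ell(\d_kv)$ as a sum of terms each of which either (i) has the structure $T_{\cT_Xv^\ell}\d_\ell\d_kv$ — absorbed into the $\|\cT_Xv\|_{\sC^{\ep-2}}\|v\|_{B^{\frac Np+1}_{p,1}}$ bound, since $\cT_Xv^\ell\in\sC^{\ep-2}$ — or (ii) involves two commutators of type $[T_{X^{k}},\Delta_j]\d_k$ and $[T_{v^\ell},S_{j-N_0}]\d_\ell$, handled by Lemma 2.100 of \cite{BCD2011} and the extra factor $\|v\|_{\sC^{-1}}$ that comes from the low-frequency part of $v$, giving the $\|v\|_{\sC^{-1}}\|\cT_Xv\|_{\sC^\ep}$ contribution, or (iii) are plain products estimated by $\|X\|_{\wt\sC^\ep}\|v\|_{B^{\frac Np+1}_{p,1}}\|v\|_{B^{\frac Np-1}_{p,1}}$. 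The bookkeeping of the $\sC^{\ep-2}$-target regularity through all these pieces — in particular verifying that the worst term genuinely reduces to $\cT_Xv$ applied to a single derivative of $v$, so that no uncontrolled $\Delta\d_uX$-type loss survives — is the delicate point; everything else is a routine application of the continuity results for $T$ and $R$ together with $\frac Np+\ep\ge1$. Collecting all contributions yields \eqref{es:TX}.
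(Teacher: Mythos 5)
Your overall strategy (substitute the equation for $\d_tX$, reduce everything to $\cT_Xv$ plus commutators) is in the spirit of the paper, but two steps do not hold up. First, your opening identity is not exact: computing directly, $[\cT_X,\d_t+v\cdot\nabla]v=T_{X^k}\big((\d_kv^\ell)\d_\ell v\big)-T_{\d_tX^k}\d_kv-[v\cdot\nabla,T_{X^k}]\d_kv$, so if you insist on putting the full material derivative $(\d_t+v\cdot\nabla)X^k$ in the first paraproduct, the ``convective commutator'' must be the modified one $[v\cdot\nabla,T_{X^k}]\d_kv-T_{v\cdot\nabla X^k}\d_kv$; as written you double-count $T_{v\cdot\nabla X^k}\d_kv$, so the three terms you estimate do not sum to the commutator. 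This is fixable (the paper keeps $T_{\d_tX^k}\d_kv$, substitutes the equation, and then must and does bound $\|v\cdot\nabla X\|_{\sC^{\ep-2}}$), but it matters because it hides which terms really occur.

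The decisive gap is in the terms where a derivative falls on the convection part. Your term $T_{X^k}[\d_k,v\cdot\nabla]v=T_{X^k}\big((\d_kv^\ell)\d_\ell v\big)$ is not a ``routine product'': with the high norm allowed only linearly, the available information is $\nabla v\in B^{\frac Np}_{p,1}\cap L^\infty$ (from $\|v\|_{B^{\frac Np+1}_{p,1}}$) and $\nabla v\in B^{\frac Np-2}_{p,1}\hookrightarrow\sC^{-2}$ (from $\|v\|_{B^{\frac Np-1}_{p,1}}$), and every Bony piece of the product then lands at best in $\sC^{\min(\frac Np,0)-2}$, never in $\sC^{\ep-2}$ when $\frac Np<\ep$ (which the hypothesis $\frac Np+\ep\geq1$ allows, and which occurs in the application where $p>N$); moreover for $p>N$ the remainder $R(\nabla v,\nabla v)$ cannot be closed at all with these exponents. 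Your stated pairing $B^{\frac Np}_{p,1}\times B^{\frac Np-1}_{p,1}$ does not correspond to the claimed bound $\|v\|_{B^{\frac Np+1}_{p,1}}\|v\|_{B^{\frac Np-1}_{p,1}}$ (it would cost $\|v\|_{B^{\frac Np}_{p,1}}$, i.e.\ a superlinear power of the high norm after interpolation). The same misplacement of derivatives ruins your treatment of the convective commutator: the ``good'' term you keep, $T_{\cT_Xv^\ell}\d_\ell\d_kv$, lies in $\sC^{\ep-3}$ (paraproduct of $\sC^{\ep-2}\times\sC^{-1}$), not $\sC^{\ep-2}$, so it is not absorbed by $\|\cT_Xv\|_{\sC^{\ep-2}}\|v\|_{B^{\frac Np+1}_{p,1}}$. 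The paper avoids all of this by never applying Leibniz inside $\cT_X(v\cdot\nabla v)$: using $\div v=0$ it writes $v\cdot\nabla v=\d_\ell(v^\ell v)$, pulls the derivative out as $\d_\ell\cT_X(v^\ell v)-\cT_{\d_\ell X}(v^\ell v)$, keeps $v\cdot\nabla\cT_Xv$ whole, and applies Lemma \ref{lemma:key_Bnsq} to $\cT_X$ of the Bony pieces of the \emph{undifferentiated} product $v^\ell v$ (whose $B^{\frac Np}_{p,1}$ norm is $\lesssim\|v\|_{\sC^{-1}}\|v\|_{B^{\frac Np+1}_{p,1}}$); the $\ep$-gain then comes from $X\in\sC^\ep$ through the commutator structure, not from a product of two gradients. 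Without this rearrangement your middle terms cannot be closed, so the proof as proposed does not establish \eqref{es:TX}.
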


\begin{proof} 
Because   $\div v=0,$ we may write 
\begin{align*}
[\cT_X, \d_t+v^{\ell} \d_\ell ]v & = -v^\ell\d_\ell T_{X^k}\d_kv- T_{\d_t X^k}\d_kv 
+ T_{X^k}\d_k(v^{\ell}\d_\ell v)\\  
&=- T_{\d_t X^k}\d_kv + \d_\ell\cT_{X}(v^{\ell} v)-\cT_{\d_\ell X}(v^{\ell} v) - v^{\ell} \d_{\ell} \cT_Xv.
\end{align*}
Hence, decomposing  $v^{\ell} v$ according to Bony's decomposition, we discover that  
$$
[\cT_X, \d_t+v^{\ell} \d_\ell ]v = \sum_{\alpha=1}^{\alpha=5} R_{\alpha}
$$
with
\begin{align*}
R_1 &:=- T_{\d_t X^k}\d_kv, &R_2 &:= \d_{\ell}(\cT_X T_{v^{\ell}}v +\cT_X T_{v}v^{\ell} ),\\
R_3 &:= \d_{\ell} \cT_{X} R(v^{\ell},v),   &R_4& :=-\cT_{\d_\ell X}(v^{\ell} v),\\
R_5 &:=- v^{\ell} \d_{\ell} \cT_Xv.
\end{align*}
To complete the proof, it suffices to check that all the terms $R_j$ may be bounded by the r.h.s. of \eqref{es:TX}.

\subsubsection*{$\bullet$ \underline{Bound of $R_1$}:}

From the equation \eqref{eq:prop_ce_dBnsqN}, we have 
\begin{equation*}
R_1 = T_{v\cdot \nabla X^k} \d_k v - T_{\d_X v^k} \d_k v.
\end{equation*}
Hence using standard continuity results for the paraproduct, we deduce that
$$
\|R_1\|_{\sC^{\ep-2}} \lesssim \|\nabla v\|_{\sC^0}\bigl(\|v \cdot \nabla X\|_{\sC^{\ep-2}} + \|\d_X v\|_{\sC^{\ep-2}}\bigr).
$$
The last term may be bounded according to item (ii) of Lemma \ref{lemma:ce3}.  
As for the first term, we use the following decomposition
$$
v\cdot\nabla X=\cT_Xv+T_{\d_kv}X^k+\d_kR(v,X^k)-R(v,\div X),
$$
which allows to get, since  $\ep + \frac{N}{p} \geq 1,$ 
\begin{equation*}
\|R_1\|_{\sC^{\ep-2}}  \lesssim \|\nabla v\|_{L^{\infty}}(\|v\|_{B^{\frac{N}{p}-1}_{p,1}} \|X\|_{\widetilde{\sC}^{\ep}} + \|\cT_X v\|_{\sC^{\ep-2}}).
\end{equation*}
\subsubsection*{$\bullet$  \underline{Bound of $R_2$}:}

Due to Lemma \ref{lemma:key_Bnsq} (i) and continuity of paraproduct operator, we have
\begin{equation*}
\|R_2\|_{\sC^{\ep-2}} \lesssim \|X\|_{\sC^\ep} \|v\|_{\sC^1} \|v\|_{\sC^{-1}} +\|v\|_{\sC^{-1}}\|\cT_{X}v\|_{\sC^\ep}+\|v\|_{\sC^1}\|\cT_{X}v\|_{\sC^{\ep-2}}.
\end{equation*}
\subsubsection*{$\bullet$ \underline{Bound of $R_3$}:}

Applying Lemma \ref{lemma:key_Bnsq} (ii) and continuity of remainder operator under the assumption
 $\frac{N}{p}+\ep \geq 1$ yields
\begin{equation*}
\|R_3\|_{\sC^{\ep-2}} \lesssim \|X\|_{\sC^{\ep}} \|v\|_{B^{\frac{N}{p}+1}_{p,1}}\|v\|_{\sC^{-1}} + \|v\|_{B^{\frac{N}{p}+1}_{p,1}} \|\cT_X v\|_{\sC^{\ep-2}}.
\end{equation*}

\subsubsection*{$\bullet$ \underline{Bound of $R_4$}:}
{}From  Bony decomposition, it is easy to get
\begin{equation*}
\|v^l v\|_{B^{\frac{N}{p}}_{p,1}} \lesssim \|v\|_{\sC^{-1}}\|v\|_{B^{\frac{N}{p}+1}_{p,1}}.
\end{equation*}
Hence
\begin{equation*}
\|R_4\|_{B^{\frac{N}{p}+\ep-2}_{p,1}} \lesssim \|\nabla X\|_{\sC^{\ep-1}}\|v\|_{\sC^{-1}}\|v\|_{B^{\frac{N}{p}+1}_{p,1}}. 
\end{equation*}

\subsubsection*{$\bullet$ \underline{Bound of $R_5$}:}
Applying Bony decomposition and using  that $\div v=0$ and $\frac{N}{p} + \ep \geq 1$ give
\begin{equation*}
\|R_5\|_{\sC^{\ep-2}} \lesssim \|v\|_{\sC^{-1}}\|\cT_{X}v\|_{\sC^\ep}+\|v\|_{B^{\frac{N}{p}+1}_{p,1}}\|\cT_{X}v\|_{\sC^{\ep-2}}.
\end{equation*}
Combining the above  estimates for all $R_{\alpha}$, with $\alpha=1,\dots,5$ yields \eqref{es:TX}. 
\end{proof}
\begin{rmk} Slight modifications of  the above proof allow  to handle time dependent functions with tilde type Besov norms. In particular, one may prove the following estimate for   all  $t>0$: 
$$\displaylines{
 \|[\cT_X, \d_t+v\cdot \nabla ]v\|_{\widetilde{L}^1_t(\sC^{\ep-2})}  \lesssim  \|v\|_{L^{\infty}_t (\sC^{-1})}\|\cT_X v\|_{\widetilde{L}^1_t (\sC^{\ep})}  \hfill\cr\hfill+\int^t_0  \|v\|_{B^{\frac{N}{p}+1}_{p,1}} \|\cT_{X}v\|_{\sC^{\ep-2}}\,dt'
 +  \int_0^t  \|X\|_{\widetilde{\sC}^{\ep}} \|v\|_{B^{\frac{N}{p}+1}_{p,1}} \|v\|_{B^{\frac{N}{p}-1}_{p,1}}\,dt'.}
 $$
\end{rmk}
\end{appendix}
\section*{Acknowledgement}
The first author is partially supported by  ANR-15-CE40-0011.
The second author is  supported by grant of \emph{R\'eseau de Recherche Doctoral en Math\'ematiques de 
l'Ile de France} (RDM-IdF).


\begin{thebibliography}{10}

\bibitem{AH2007}
Abidi, H., Hmidi, T. (2007).
\newblock On the global well-posedness for {B}oussinesq system.
\newblock {\em Journal of Differential Equations}, 233(1):199--220.

\bibitem{BCD2011}
Bahouri, H., Chemin, J.-Y., Danchin, R. (2011).
\newblock {\em Fourier analysis and nonlinear partial differential equations},
  volume 343.
\newblock Springer Science \& Business Media.

\bibitem{Bo1981}
Bony, J.-M. (1981).
\newblock Calcul symbolique et propagation des singularit\'es pour les
  \'equations aux d\'eriv\'ees partielles non lin\'eaires.
\newblock {\em Ann. Sci. \'Ecole Norm. Sup. (4)}, 14(2):209--246.

\bibitem{Chae2006}
Chae, D. (2006).
\newblock Global regularity for the 2{D} {B}oussinesq equations with partial
  viscosity terms.
\newblock {\em Advances in Mathematics}, 203(2):497--513.

\bibitem{Che1988}
Chemin, J.-Y. (1988). 
\newblock Calcul paradiff{\'e}rentiel pr{\'e}cis{\'e} et applications {\`a} des
  {\'e}quations aux d{\'e}riv{\'e}es partielles non semi-lin{\'e}aires.
\newblock {\em Duke Math. J}, 56(3):431--469.

\bibitem{Che93}
Chemin, J.-Y. (1993).
\newblock Persistance de structures g\'eom\'etriques dans les fluides
  incompressibles bidimensionnels.
\newblock {\em Ann. Sci. \'Ecole Norm. Sup. (4)}, 26(4):517--542.

\bibitem{Che1998}
Chemin, J.-Y. (1998).
\newblock {\em Perfect incompressible fluids}, volume~14 of {\em Oxford Lecture
  Series in Mathematics and its Applications}.
\newblock , New York: The Clarendon Press, Oxford University Press.

\bibitem{Che1999}
Chemin, J.-Y. (1999).
\newblock Th{\'e}or{\`e}mes d'unicit{\'e} pour le syst{\`e}me de
  {N}avier-{S}tokes tridimensionnel.
\newblock {\em Journal d'analyse math{\'e}matique}, 77(1):27--50.

\bibitem{CheL1995}
Chemin, J.-Y., Lerner, N. (1995).
\newblock Flot de champs de vecteurs non lipschitziens et {\'e}quations de
  {N}avier-{S}tokes.
\newblock {\em Journal of Differential Equations}, 121(2):314--328.

\bibitem{Dan1997}
Danchin, R. (1997).
\newblock Poches de tourbillon visqueuses.
\newblock {\em Journal de Math{\'e}matiques pures et appliqu{\'e}es},
  76(7):609--647.

\bibitem{Dan1999}
Danchin, R. (1999).
\newblock Persistance de structures g{\'e}om{\'e}triques et limite non
  visqueuse pour les fluides incompressibles en dimension quelconque.
\newblock {\em Bulletin de la Soci{\'e}t{\'e} Math{\'e}matique de France},
  127(2):179--228.

\bibitem{Dan2007}
Danchin, R. (2007).
\newblock Uniform estimates for transport-diffusion equations.
\newblock {\em Journal of Hyperbolic Differential Equations}, 4(01):1--17.

\bibitem{DanM2012}
Danchin, R., Mucha, P.B. (2012).
\newblock A {L}agrangian approach for the incompressible {N}avier-{S}tokes
  equations with variable density.
\newblock {\em Communications on Pure and Applied Mathematics},
  65(10):1458--1480.

\bibitem{DanP2008b}
Danchin, R., Paicu, M. (2008).
\newblock Existence and uniqueness results for the {B}oussinesq system with
  data in {L}orentz spaces.
\newblock {\em Physica D: Nonlinear Phenomena}, 237(10):1444--1460.

\bibitem{DanP2008a}
Danchin, R., Paicu, M. (2008).
\newblock Les th{\'e}or{\`e}mes de {L}eray et de {F}ujita-{K}ato pour le
  syst{\`e}me de {B}oussinesq partiellement visqueux.
\newblock {\em Bulletin de la Soci{\'e}t{\'e} math{\'e}matique de France},
  136(2):261--309, 2008.

\bibitem{doC1976}
Do~Carmo, M.P. (1976). 
\newblock {\em Differential geometry of curves and surfaces}, volume~2.
\newblock Prentice-hall Englewood Cliffs.

\bibitem{GS-R1995}
Gamblin, P., Saint-Raymond, X. (1995).
\newblock On three-dimensional vortex patches.
\newblock {\em Bulletin de la Soci{\'e}t{\'e} Math{\'e}matique de France},
  123(3):375--424.

\bibitem{He2012}
He, L. (2012).
\newblock Smoothing estimates of 2d incompressible {N}avier-{S}tokes equations
  in bounded domains with applications.
\newblock {\em J. Funct. Anal.}, 262(7):3430--3464.

\bibitem{HK2007}
Hmidi, T., Keraani, S. (2007).
\newblock On the global well-posedness of the two-dimensional {B}oussinesq
  system with a zero diffusivity.
\newblock {\em Advances in Differential Equations}, 12(4):461--480.

\bibitem{HK2008}
Hmidi, T., Keraani, S. (2008).
\newblock Incompressible viscous flows in borderline {B}esov spaces.
\newblock {\em Archive for Rational Mechanics and Analysis}, 189(2):283--300.

\bibitem{HouL2005}
Hou, T.Y., Li, C. (2005).
\newblock Global well-posedness of the viscous {B}oussinesq equations.
\newblock {\em Discrete Contin. Dyn. Syst}, 12(1):1--12.

\bibitem{MajB2002}
Majda, A.J., Bertozzi, A.L. (2002).
\newblock {\em Vorticity and incompressible flow}, volume~27.
\newblock Cambridge University Press.

\bibitem{Ped1987}
Pedlosky, J. (1987).
\newblock {\em {G}eophysical {F}luid {D}ynamics}.
\newblock {S}pringer Verlag.

\bibitem{Vish1998}
Vishik, M. (1988).
\newblock Hydrodynamics in {B}esov spaces.
\newblock {\em Archive for Rational Mechanics and Analysis}, 145(3):197--214.

\end{thebibliography}
\end{document}